\newtheorem{lema}{Lemma}[section]
\newtheorem{theorem}[lema]{Theorem}
\newtheorem{prop}[lema]{Proposition}
\newtheorem{af}[lema]{\it\underline{Claim}}
\theoremstyle{definition}
\newtheorem{definition}[lema]{Definition}
\theoremstyle{remark}
\newtheorem{obs}[lema]{Remark}
\newtheorem{rmk}[lema]{Remarks}
\theoremstyle{plain}
\newcommand{\nc}{\newcommand}
\nc{\im}{\mathtt{i}}
\nc{\RR}{{\Bbb R}} \nc{\CC}{{\Bbb C}} \nc{\ZZ}{{\Bbb Z}}
\nc{\FF}{{\Bbb F}} \nc{\NN}{{\Bbb N}} \nc{\QQ}{{\Bbb Q}}
\nc{\PP}{{\Bbb P}} \nc{\KK}{{\Bbb K}} \nc{\DD}{{\Bbb D}}
\nc{\vs}{\vspace{.5cm}}
\nc{\no}{\smallbreak\noindent}
\newcommand\id{\operatorname{id}}
\newcommand\ad{\operatorname{ad}}
\newcommand\co{\operatorname{co}}
\newcommand\ord{\operatorname{ord}}
\newcommand\End{\operatorname{End}}
\newcommand\Rad{\operatorname{Rad}}
\newcommand\Rep{\operatorname{Rep}}
\newcommand\Comod{\operatorname{Comod}}
\newcommand{\eps}{\varepsilon}
\newcommand{\ot}{\otimes}
 \nc{\D}{\Delta} \nc{\e}{\varepsilon}
\nc{\GL}{\operatorname{GL}} \nc{\wact}{\rightharpoonup}
\nc{\Tr}{\mathrm{Tr}} \nc{\cark}{char\,k} \nc{\adl}{\ad_\ell}
\nc{\cP}{\mathcal{P}} \nc{\cU}{\mathcal{U}} \nc{\fD}{\mathfrak{D}}
\nc{\cE}{\mathcal{E}} \nc{\cS}{\mathcal{S}}
\nc{\Ho}{H_0} \nc{\GH}{G(H)} \nc{\mas}{\oplus}
\nc{\coM}{\mathcal{M}^\ast(2,k)} \nc{\cA}{\mathcal{A}}
\nc{\yd}{^{C_2}_{C_2}\mathcal{YD}} \nc{\PH}{\cP(H)}
\nc{\Ftwist}{\overset{\curvearrowright}F}
\nc{\Dtwist}{\overset{\curvearrowright}D}
\begin{document}
\title[Hopf algebras of dimension 16]
{Hopf algebras of dimension 16}

\author[g. a. garc\'\i a and c. vay]
{Gast\' on Andr\' es Garc\'\i a and Cristian Vay}

\address{FaMAF-CIEM (CONICET), Universidad Nacional de C\'ordoba
Medina Allende s/n, Ciudad Universitaria, 5000 C\' ordoba, Rep\'
ublica Argentina.} \email{ggarcia@mate.uncor.edu,
vay@mate.uncor.edu}

\thanks{\noindent 2000 \emph{Mathematics Subject Classification.}
16W30. \newline The work was partially supported by CONICET,
FONCyT-ANPCyT, Secyt (UNC), Agencia C\'ordoba Ciencia.}

\date{\today}

\begin{abstract}
We complete the classification of Hopf algebras of dimension 16
over an algebraically closed field of characteristic zero. We show
that a non-semisimple Hopf algebra of dimension 16, has either the
Chevalley property or its dual is pointed.
\end{abstract}

\maketitle



\section{Introduction}

Let $k$ be an algebraically closed field of characteristic $0$. In
1975, I. Kaplansky posed the question of classifying all Hopf
algebras over $k$ of a fixed dimension. Since the only semisimple
and pointed Hopf algebras are the group algebras, we shall adopt
the convention that `pointed' means `pointed non-semisimple'. Many
results have been found, dealing mainly with the semisimple or
pointed cases.

\smallbreak Concerning Kaplansky´s question, there are very few
general results. The Kac-Zhu Theorem \cite{Z}, states that a Hopf
algebra of prime dimension is isomorphic to a group algebra. S-H. Ng
\cite{Ng} proved that in dimension $p^{2}$, the only Hopf algebras
are the group algebras and the Taft algebras, using previous results
in \cite{andrussch}, \cite{masuoka3}. It is a common belief that a
Hopf algebra of dimension $pq$, where $p$ and $q$ are distinct prime
numbers, is semisimple. Hence, it should be isomorphic to a group
algebra or a dual group algebra by \cite{gelakiwest},
\cite{etinofgelaki}, \cite{masuoka2}. This conjecture was verified
for some particular values of $p$ and $q$, see \cite{andrunatale,
bitidasca, etinofgelaki2, Ng2, Ng3}. In particular, it is known that
Hopf algebras of dimension $14$ and $15$ are group algebras or dual
group algebras \cite{bitidasca}, \cite{andrunatale}.

\smallbreak In fact, all Hopf algebras of dimension $\leq 15$ are
classified: for dimension $\leq 11$ the problem was solved by
\cite{W}; an alternative proof appears in \cite{stefan}. The
classification for dimension 12 was done by \cite{F} in the
semisimple case and then completed by \cite{natale} in the general
case.

\smallbreak It turns out that any Hopf algebra of dimension $\leq
15$ is either semisimple or pointed or its dual is pointed. On the
other hand, there exist Hopf algebras of dimension 16 that are
non-semisimple, non-pointed and their duals are also non-pointed.
Nevertheless, these Hopf algebras satisfies a certain property which
we call the \textit{Chevalley property}.

\smallbreak Recall that a tensor category $\mathcal{C}$  over $k$
has the Chevalley property if the tensor product of any two simple
objects is semisimple. We shall say that a Hopf algebra $H$ has
the \emph{Chevalley property} if the category $\Comod (H)$ of
$H$-comodules does.

\begin{rmk}
(i) The notion of the Chevalley property in the setting of Hopf
algebras was introduced by \cite{AEG}: it is said in \textit{loc.
cit.} that a Hopf algebra has the Chevalley property if the category
$\Rep(H)$ of $H$-modules does.

\smallbreak (ii) Unlike \cite{AEG}, in \cite[Section
1]{de1tipo6chevalley}, the authors refer the Chevalley property to
the category of $H$-comodules; this definition is the one we
adopt. Note that it is equivalent to say that the coradical of $H$
is a Hopf subalgebra.

\smallbreak (iii) If $H$ is semisimple or pointed then it has the
Chevalley property.
\end{rmk}

Here is the main result of the present paper.

\begin{theorem}\label{main result}
Let $H$ be a Hopf algebra of dimension 16. If $H$ does not have
the Chevalley property then $H^*$ is pointed.
\end{theorem}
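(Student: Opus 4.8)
The plan is to argue by contradiction, exploiting that the desired conclusion is self-dual in spirit. First I record two reductions. Since a pointed (resp. semisimple) Hopf algebra automatically has the Chevalley property by Remark (iii), the hypothesis that $H$ does not have the Chevalley property forces $H$ to be non-semisimple and non-pointed; in particular $H_0 \supsetneq k[G(H)]$, so $H$ carries a simple subcoalgebra of dimension $\geq 4$, equivalently a simple $H$-comodule of dimension $\geq 2$. On the other side, for a finite-dimensional Hopf algebra $H^*$ is pointed if and only if $(H^*)_0 = k[G(H^*)]$, which is equivalent to every simple $H$-module being one-dimensional. I would therefore assume, towards a contradiction, that $H^*$ is also non-pointed, i.e. that $H$ admits a simple module of dimension $\geq 2$, and show that this cannot coexist with the non-Chevalley property in dimension $16$.

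Next I would pin down the coradical. By Nichols--Zoeller both $|G(H)|$ and $|G(H^*)|$ divide $16$. Writing $\dim H_0 = |G(H)| + \sum_i n_i^2$ with $n_i \geq 2$ the dimensions of the non-trivial simple comodules, and similarly $\dim(H^*)_0 = |G(H^*)| + \sum_j m_j^2$, the inequalities $\dim H_0 < 16$ and $\dim(H^*)_0 < 16$ (both from Larson--Radford, since $H$ is non-semisimple) immediately force every $n_i, m_j \in \{2,3\}$. I would first rule out a simple subcoalgebra of dimension $9$ (type $M_3$): the group $G(H)$ acts by left translation on the set of $9$-dimensional simple subcoalgebras, and together with the dual action of $G(H^*)$ and the divisibility constraints this leaves too little room inside a $16$-dimensional space; the antipode $S$, which permutes simple subcoalgebras preserving dimension, further restricts the admissible orbits. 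The upshot would be that all non-trivial simple comodules are $2$-dimensional, so $H_0 = k[G(H)] \oplus \bigoplus C$ with each $C$ a $4$-dimensional comatrix coalgebra, and symmetrically all non-trivial simple modules are $2$-dimensional.

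With the coradical reduced to the $M_2$ case, I would study the Hopf subalgebra generated by a $4$-dimensional simple subcoalgebra $C$ together with $G(H)$. The action of $G(H)$ on the $4$-dimensional simple subcoalgebras, the behaviour of $S$ on $C$, and the comatrix structure of $C$ give strong constraints: the orbit sizes and stabilizers, combined with $|G(H)| + 4r = \dim H_0 \leq 15$, where $r$ is the number of $4$-dimensional simple subcoalgebras, and $|G(H)| \mid 16$, leave only a handful of numerical possibilities for the pair $(|G(H)|, \dim H_0)$, and the same analysis applied to $H^*$ constrains $(|G(H^*)|, \dim(H^*)_0)$. The key structural input here is that the algebra generated by $C$ behaves like a quotient of a quantum $\mathrm{SL}_2$/Taft-type datum, forcing the presence of a genuine Hopf subalgebra of dimension $8$ or $12$; Nichols--Zoeller then makes $H$ free over it and sharply limits what remains.

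The main obstacle is the final step: showing that the two features cannot coexist, namely that $H$ cannot simultaneously possess a $2$-dimensional simple comodule (non-pointed $H$), a $2$-dimensional simple module (non-pointed $H^*$), and a coradical that is not a Hopf subalgebra (non-Chevalley). The difficulty is precisely that, in the non-Chevalley case, $H_0$ is not a subalgebra, so the bosonization and lifting machinery for $\gr H$ is unavailable and one cannot reduce to a graded problem. My approach would be to work directly with the coradical filtration and the wedge products $C \wedge C$, tracking how multiplication interacts with the coalgebra grading, and to compare the forced dimensions of the Hopf subalgebras generated on the $H$- and $H^*$-sides. I expect that matching these against $\dim H = 16$ — using that $H$ is free over each such Hopf subalgebra — yields a numerical contradiction in every surviving case, so that non-Chevalley forces all simple $H$-modules to be one-dimensional, that is, $H^*$ pointed.
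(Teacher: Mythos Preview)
Your opening reductions (both $H$ and $H^*$ non-semisimple, non-pointed) and the broad strategy of constraining the coradical type are in line with the paper's Proposition~\ref{posibles coradicales}. But from there the proposal has a genuine gap: the mechanism you hope will finish the proof does not exist.

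First, a small but telling error: you speak of a Hopf subalgebra of dimension $8$ or $12$, but $12 \nmid 16$, so Nichols--Zoeller already forbids dimension $12$. More seriously, your expected ``numerical contradiction'' in the final step is not there. Take type $(2,2)$: here $|G(H)|=2$, $\dim H_0 = 10$, and nothing numerical prevents $H^*$ from also being of type $(2,2)$. The paper's proof of this case (Proposition~\ref{G2 mas coM mas coM}) is the hardest in the paper and is not a counting argument at all: one shows that $S$ must permute the two $4$-dimensional simple subcoalgebras $C$ and $D$, then, via explicit comatrix computations with the elements $e_{ij}$ and $f_{ij}=S(e_{ji})$, one constructs non-trivial skew-primitives forcing a Hopf subalgebra isomorphic to $\cA_2$ inside $H$; dualizing and playing this off against an epimorphism $H\twoheadrightarrow \cA_2$ then yields the contradiction. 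None of this is visible from dimensions, wedge products, or freeness alone.

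More generally, the paper's proof is an irreducibly case-by-case analysis over the six coradical types, and each case rests on structural input you do not invoke: Natale's central exact sequence for a Hopf algebra generated by an $S$-stable $4$-dimensional simple subcoalgebra (Proposition~\ref{enunciado de sonia sobre el encaje de H en una extension} and Theorem~\ref{2n tiene dual punteado}); the full classification of non-semisimple Hopf algebras of dimension $8$ (Section~\ref{la clasificacion de stefan}), in particular the unique non-pointed one $\cA=(\cA''_4)^*$ and its projection properties (Lemma~\ref{proyections desde la unica}); the criterion Lemma~\ref{como el teo de gaston} that an extension by $k[C_2]$ of something with pointed dual again has pointed dual; and the computation (Lemma~\ref{lo que creia que era solo el tensor}) that every extension of $T_{-1}$ by $T_{-1}$ is $T_{-1}\otimes T_{-1}$, hence pointed. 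Several cases are disposed of by producing an exact sequence $T_{-1}\hookrightarrow H \twoheadrightarrow T_{-1}$ or $K\hookrightarrow H \twoheadrightarrow k[C_2]$ and then invoking one of these lemmas. Your proposal to ``track how multiplication interacts with the coalgebra grading'' and compare dimensions of generated Hopf subalgebras does not supply a substitute for any of these steps; in particular it gives no handle on whether $S$ stabilizes or permutes the $4$-dimensional simple subcoalgebras, which is the branching point that drives every one of the paper's arguments.
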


As a consequence of Theorem \ref{main result}, we obtain the
classification of Hopf algebras of dimension 16.

\begin{theorem}\label{main result2}
Let $H$ be a Hopf algebra of dimension 16. Then $H$ is isomorphic
to one and only one of the Hopf algebras in the following list.
\begin{enumerate}[(i)]
\item The group algebras of groups of order $16$ and their duals.

\item The semisimple Hopf algebras listed in \cite[Thm.
1.2]{kashina}.

\item The pointed Hopf algebras listed in \cite[Section
2.5]{pointed16}.

\item  The duals of the pointed Hopf algebras listed in \cite[Sec.
4.2, Table 2]{biti}.

\item The two non-semisimple non-pointed self-dual Hopf algebras
with the Chevalley property listed in \cite[Thm.
5.1]{de1tipo6chevalley}.
\end{enumerate}
\end{theorem}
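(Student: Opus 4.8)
The plan is to prove Theorem \ref{main result2} by assembling the existing classification results together with Theorem \ref{main result}. First I would split the argument according to whether $H$ is semisimple or not. If $H$ is semisimple, then by the dual-semisimple dichotomy either $H$ is a group algebra or its dual is, giving item (i), or it is one of the genuinely noncommutative noncocommutative semisimple Hopf algebras, which are exactly those classified in \cite[Thm.~1.2]{kashina}, giving item (ii). So the semisimple case reduces entirely to citation.

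For the non-semisimple case, the key structural dichotomy is furnished by Theorem \ref{main result}: I would argue that every non-semisimple $H$ of dimension $16$ either has the Chevalley property, or else (by Theorem \ref{main result}) $H^*$ is pointed. In the latter situation $H$ is the dual of a pointed Hopf algebra of dimension $16$, and these are classified in \cite[Sec.~4.2, Table~2]{biti}, yielding item (iv). If instead $H$ has the Chevalley property, I would further subdivide: if $H$ is itself pointed, it appears in the list \cite[Section~2.5]{pointed16}, giving item (iii); if $H^*$ is pointed we are again in the dual-pointed case (iv). The remaining possibility is that $H$ has the Chevalley property but is neither pointed nor has pointed dual, i.e.\ it is non-semisimple, non-pointed with non-pointed dual; such Hopf algebras are precisely the two self-dual examples of \cite[Thm.~5.1]{de1tipo6chevalley}, giving item (v).

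The final and most delicate step is to establish that the list is \emph{exhaustive and irredundant}, i.e.\ that every $H$ falls into exactly one class and that no Hopf algebra is counted twice. Exhaustiveness follows from the case analysis above once one checks that the three properties — semisimple, Chevalley, pointed — interlock correctly; the main work is verifying that the four logical branches (semisimple; dual-pointed; pointed; Chevalley-but-neither) genuinely cover all of dimension $16$, which is exactly where Theorem \ref{main result} does the heavy lifting by ruling out a hypothetical fifth class of non-Chevalley Hopf algebras with non-pointed dual. For irredundancy I would observe that the defining properties are mutually exclusive across the list: semisimplicity separates (i)–(ii) from (iii)–(v); pointedness separates (iii); the property that the dual be pointed separates (iv); and (v) is characterised by being non-semisimple, non-pointed, with non-pointed dual. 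Within each referenced source the classification is already up to isomorphism, so the only genuine obstacle is ensuring that the distinct sources do not overlap — for instance that no dual of a pointed algebra in \cite{biti} coincidentally is itself pointed or self-dual Chevalley — which I expect to handle by comparing coradical dimensions and the dimensions of the spaces of primitives (or equivalently the group of group-likes of $H$ and of $H^*$) as discrete invariants that distinguish the families.
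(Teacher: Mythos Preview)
Your approach is essentially the same as the paper's: split into semisimple and non-semisimple, then in the non-semisimple case use Theorem~\ref{main result} together with the cited classifications to cover the pointed, dual-pointed, and non-pointed Chevalley subcases. The paper's proof is terser and orders the subcases slightly differently (checking pointed first, then Chevalley non-pointed, then invoking Theorem~\ref{main result}), and it does not spell out the irredundancy argument you sketch; note that your separation of (iii) and (iv) by ``dual is pointed'' is not quite right as stated, since some pointed $H$ have pointed dual---the disjointness is actually built into the citation, as \cite[Sec.~4.2, Table~2]{biti} lists precisely those pointed $H$ whose duals are \emph{not} pointed.
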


\begin{proof}
Let $H$ be a Hopf algebra of dimension 16. If $H$ is semisimple,
then $H$ is either a group algebra, or a dual of a group algebra or
is one of the list given in \cite[Thm. 1.2]{kashina}. Suppose now
that $H$ is non-semisimple. If it is pointed, then $H$ is one of the
Hopf algebras given in \cite[Section 2.5]{pointed16}. If $H$ is
non-pointed and has the Chevalley property, then it must be one of
the two Hopf algebras given by \cite[Thm. 5.1]{de1tipo6chevalley}.
Then, the result follows from Theorem \ref{main result}.
\end{proof}

\smallbreak The paper is organized as follows. In Section
\ref{extensions} we recall the definition and some known facts
about Hopf algebra extensions. We give a detailed description of
the cleft extensions of the Sweedler algebra $T_{-1}$ in
Subsection \ref{Cleft extensions of the Sweedler algebra}, a
result from \cite{masuoka}-- we follow the exposition in
\cite{doitake}. As a consequence we show in Lemma \ref{lo que
creia que era solo el tensor}, that a Hopf algebra which is an
extension of $T_{-1}$ by $T_{-1}$ is isomorphic to the tensor
product $T_{-1}\ot T_{-1}$. In Section \ref{la clasificacion de
stefan} we recall the classification of non-semisimple Hopf
algebras of dimension $8$ given by \cite{stefan}, since it is used
several times in the proof of our main theorem. In Section
\ref{Hopf algebras generated by simple coalgebras} we discuss some
consequences of results of \cite{natale} and \cite{stefan}
concerning Hopf algebras $H$ generated by a simple subcoalgebra of
dimension $4$ stable by the antipode. In particular, we show in
Theorem \ref{2n tiene dual punteado} that under certain
assumptions $H^{*}$ must be pointed. Finally we prove our main
theorem in Section \ref{Proof of the Main Theorem}. We first
describe all possible coradicals of a Hopf algebra of dimension 16
which does not have the Chevalley property. It turns out that
there are $6$ possible coradicals. This leads us to do the proof
case by case according to the type of the coradical. The most
difficult cases are those where the coradical has two simple
subcoalgebras $C$ and $D$ of dimension $4$, since one does not
know whether they are stable by the antipode. The problem is
solved by looking at the subalgebra generated by $C$, which is
indeed a Hopf subalgebra, in the case that both $C$ and $D$ are
stable by the antipode. In the other case, one assumes that
$H^{*}$ is non-pointed and then one gets a contradiction by
looking at the Hopf subalgebras of dimension $8$ contained in it.

\smallbreak If $H$ is a Hopf algebra over $k$ then $\D$, $\e$, $S$
denote respectively the comultiplication, the counit and the
antipode; $\GH$ denotes the group of group-like elements of $H$;
$(H_n)_{n\in\NN}$ denotes the coradical filtration of $H$; $L_h$
(resp. $R_h$) is the left (resp. right) multiplication in $H$ by
$h$. The left and right adjoint action $\adl, \ad_r:H\rightarrow
\End(H)$, of $H$ on itself are given, in Sweedler notation, by:
$$
\adl(h)(x) = \sum h_1xS(h_2), \qquad \ad_r(h)(x) = \sum
S(h_1)xh_2,
$$
for all $h,x\in H$. The set of $(g,h)${\it -primitives} (with
$h,g\in\GH$) and {\it skew-primitives} are:
$$
\begin{array}{rcl}
\cP_{g,h}(H)&:=&\{x\in H\mid\D(x)=x\ot h+g\ot x\},\\
\noalign{\smallskip} \cP(H)&:=&\sum_{h,g\in\GH}\cP_{h,g}(H).
\end{array}
$$
We say that $x\in k\cdot(h-g)$ is a {\it trivial} skew-primitive;
otherwise, it is {\it non-trivial}.

Let $K$ be a coalgebra with a distinguished group-like 1. If $M$
is a right $K$-comodule via $\delta$, then the space of {\it right
coinvariants} is $$ M^{\co \delta} = \{x\in
M\mid\delta(x)=x\ot1\}.
$$ In particular, if $\pi:H\rightarrow K$ is a morphism of Hopf
algebras, then $H$ is a right $K$-comodule via $(1\ot\pi)\D$ and
in this case $H^{\co \pi}:=H^{\co (1\ot\pi)\D}$.

Let ${\mathcal M}^\ast(n,k)$ denote the simple coalgebra of
dimension $n^2$, dual to the matrix algebra ${\mathcal M}(n,k)$. A
basis $\{e_{ij}\mid1\leq i,j\leq n\}$ of ${\mathcal M}^\ast(n,k)$
such that $\D(e_{ij})=\sum_{l=1}^ne_{il}\ot e_{lj}$ and
$\e(e_{ij})=\delta_{ij}$ is called a {\it comatrix basis.}

\section{Extensions}\label{extensions}
Our references for the theory of extensions of algebras and Hopf
algebras are \cite{andrudevoto} and \cite{mongomeri}.

\subsection{Extensions of Hopf algebras}\label{ext de Hopf}

\begin{definition}
Let $A\subset C$ be an extension of $k$-algebras and $B$ be a Hopf
algebra. $A\subset C$ is a {\it $B$-cleft extension} if $C$ is a
(right) $B$-comodule algebra via $\delta$ with $C^{\co \delta}=A$
and there is  $\gamma:B\rightarrow C$ a morphism of $B$-comodules
which is convolution invertible.
\end{definition}

It is known that any cleft extension arises as a {\it crossed
product} $A\#_{\rightharpoonup,\sigma}B$, and conversely any
crossed product is a cleft extension \cite[Thm. 7.2.2]{mongomeri}.
Here $\rightharpoonup:B\ot A\rightarrow A$ is a {\it weak action}
and $\sigma:B\ot B\rightarrow A$ is a {\it $2$-cocycle} satisfying
certain compatibility  conditions, so that $A\ot B$ becomes an
associative algebra with a new product and unit $1\ot1$. The
multiplication is given by:
\begin{equation}\label{producto en el smash}
(a\# b)(a'\# b')=a(b_1\rightharpoonup a')\sigma(b_2,b'_1)\#
b_3b'_2,
\end{equation}
for all $a,a'\in A$ and $b,b'\in B$. See \cite[Section
2]{andrudevoto} or \cite[Section 7]{mongomeri} for details.

\begin{definition}\cite{andrudevoto}.
Let $A\overset{\imath}\hookrightarrow
C\overset{\pi}\twoheadrightarrow B$ be a sequence of Hopf algebras
morphisms. We shall say that it is {\it exact} and $C$ is an {\it
extension of $A$ by $B$} if:
\begin{enumerate}[(i)]
\item $\imath$ is injective (then we identify $A$ with its image);
\item $\pi$ is surjective;
\item $\pi\imath=\e$;
\item $\ker\pi=A^+C$ ($A^+$ is the kernel of the counit);
\item $A=\,C^{\co \pi}$.
\end{enumerate}
\end{definition}

The following statement condenses some known results and is useful
to find exact sequences.

\begin{lema}\label{resumen sobre ext nec}
Let $C$ be a finite-dimensional Hopf algebra. If $\pi:C\rightarrow
B$ is an epimorphism of Hopf algebras then $\dim C=\dim C^{\co
\pi}\dim B$. Moreover, if $A=C^{\co \pi}$ is a Hopf subalgebra
then the sequence $ A\overset{\imath}\hookrightarrow
C\overset{\pi}\twoheadrightarrow B $ is exact.
\end{lema}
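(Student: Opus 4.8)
The plan is to derive everything from the freeness of $C$ over the subalgebra $A := C^{\co\pi}$. First I would note that $A$ is indeed a subalgebra of $C$: since $\rho := (1\ot\pi)\D$ makes $C$ a right $B$-comodule algebra, its coinvariants form a subalgebra. The key input is the structure theorem for such comodule algebras (see \cite[Section 7]{mongomeri} or \cite[Section 2]{andrudevoto}): because $\pi$ is a Hopf algebra epimorphism and $C$ is finite dimensional, $C$ is faithfully flat --- in fact free --- as a left $A$-module, and there is an isomorphism $C\cong A\ot B$ of left $A$-modules. Counting dimensions gives $\dim C=\dim A\,\dim B=\dim C^{\co\pi}\,\dim B$, which is the first assertion. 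Alternatively, one may dualize: $\pi^{*}\colon B^{*}\hookrightarrow C^{*}$ realizes $B^{*}$ as a Hopf subalgebra, so $C^{*}$ is free over $B^{*}$ by the Nichols--Zoeller theorem, forcing $\dim B$ to divide $\dim C$.

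For the second assertion, I would assume that $A=C^{\co\pi}$ is a Hopf subalgebra and verify the five conditions defining an exact sequence. Conditions (i), (ii) and (v) are immediate: $\imath$ is the inclusion, $\pi$ is surjective by hypothesis, and $A=C^{\co\pi}$ is exactly the standing assumption. For (iii), take $a\in A$; the coinvariance $\sum a_1\ot\pi(a_2)=a\ot1$ yields, after applying $\e\ot\id$, that $\pi(a)=\e(a)1$, so $\pi\imath=\e$.

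The only substantial point is condition (iv), namely $\ker\pi=A^{+}C$. One inclusion is formal: if $a\in A^{+}$ then $\e(a)=0$, so by (iii) $\pi(a)=0$, whence $\pi(A^{+}C)=\pi(A^{+})\pi(C)=0$ and $A^{+}C\subseteq\ker\pi$. For the reverse inclusion I would argue by dimension. Since $A^{+}$ has codimension $1$ in $A$ and $C$ is free of rank $\dim B$ as a left $A$-module, we get $\dim A^{+}C=(\dim A-1)\dim B=\dim C-\dim B$. On the other hand $\dim\ker\pi=\dim C-\dim B$ because $\pi$ is onto $B$. Thus the inclusion $A^{+}C\subseteq\ker\pi$ is an equality of spaces of the same finite dimension, giving (iv).

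The main obstacle is the freeness (equivalently, faithful flatness) of $C$ over $A$, on which both the dimension formula and the crux of (iv) rest; everything else is a routine verification of the axioms. This is why I would lean on the established structure theorems for Hopf algebra extensions recorded in \cite{andrudevoto} and \cite{mongomeri} rather than attempt an ad hoc argument.
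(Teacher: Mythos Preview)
Your proof is correct and follows essentially the same route as the paper's: both rest on Schneider's freeness theorem for finite-dimensional Hopf quotients (the paper cites \cite[Thm.~2.4]{sch} directly, while you invoke the equivalent structure theorem via \cite{mongomeri}, \cite{andrudevoto}), then deduce $A^{+}C=\ker\pi$ by a dimension comparison. The only cosmetic difference is that the paper cites \cite[Thm.~2.4~(2.a)]{sch} for $\dim(C/A^{+}C)=\dim B$, whereas you derive this explicitly from the freeness $C\cong A^{\dim B}$; the content is the same.
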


\begin{proof}
The equality of dimension follows from \cite[Thm. 2.4.
(1.b)]{sch}. Moreover, if $A=C^{\co \pi}$ then $\pi_{|A}=\e_{|A}$
and therefore $A^+C\subseteq\ker\pi$. It follows from \cite[Thm.
2.4. (2.a)]{sch} that $\dim B=\dim(C/A^+C)$. Therefore
$A^+C=\ker\pi$, and the lemma follows.
\end{proof}

Exact sequences of finite-dimensional Hopf algebras are cleft by
\cite[Thm. 2.2]{sch} so by the results in \cite[Subsection
3.2]{andrudevoto} we have the following. Recall the definition of
Hopf datum \cite[Def. 2.26]{andrudevoto} and the corresponding
Hopf algebra $A\,^{\rho,\tau}\#_{\rightharpoonup,\sigma}B $
associated to it.

\begin{theorem}\label{recuperar el centro de la ext} Let $A$ and
$B$ be finite-dimensional Hopf algebras.
\begin{enumerate}[(i)]
\item Let $ A\overset{\imath}\hookrightarrow
C\overset{\pi}\twoheadrightarrow B$ be an exact sequence of Hopf
algebras. Then $C$ is finite-dimensional and there exists a Hopf
datum $(\rightharpoonup,\sigma,\rho,\tau)$ such that $C\simeq
A\,^{\rho,\tau}\#_{\rightharpoonup,\sigma}B$ as Hopf algebras.

\item Conversely, if $(\rightharpoonup,\sigma,\rho,\tau)$ is a Hopf
datum over $A$ and $B$, then the maps $\imath(a)=a\#1$ and
$\pi(a\# b)=\e(a)b$ are morphisms of Hopf algebras and give rise
to an exact sequence of Hopf algebras $$
A\overset{\imath}\hookrightarrow
A\,^{\rho,\tau}\#_{\rightharpoonup,\sigma}B\overset{\pi}\twoheadrightarrow
B.$$

\item\label{como tiene q ser el morfismo para tener extension de hopf eq}
Let $\phi:B\rightarrow A$ be a convolution-invertible linear map
such that $\phi(1)=1$ and $\e\circ\phi=\e$. Then $
A\,^{\rho,\tau}\#_{\rightharpoonup,\sigma}B\simeq
A\,^{\rho^{\phi^{-1}},\tau^{\phi^{-1}}}\#_{\,^\phi\rightharpoonup,\,^\phi\sigma}B
$ for any Hopf datum $(\rightharpoonup,\sigma,\rho,\tau)$.
$\qquad\square$
\end{enumerate}
\end{theorem}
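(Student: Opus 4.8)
The plan is to obtain all three parts by combining two facts that the preceding discussion has already put in place: that exact sequences of finite-dimensional Hopf algebras are cleft by \cite[Thm.~2.2]{sch}, and that cleft extensions are reconstructed from crossed-product data as in \cite[Thm.~7.2.2]{mongomeri} and \cite[Subsection~3.2]{andrudevoto}. For part~(i), I would start from the cleftness: regarding $C$ as a right $B$-comodule algebra via $(\id\ot\pi)\D$ with $C^{\co\pi}=A$, Schneider's theorem provides a convolution-invertible $B$-comodule map $\gamma:B\to C$, which after a standard normalization I may take to satisfy $\gamma(1)=1$.

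From such a $\gamma$ the algebra-level data are read off by the usual formulas
$$
b\rightharpoonup a\;=\;\sum\gamma(b_1)\,a\,\gamma^{-1}(b_2),
\qquad
\sigma(b,b')\;=\;\sum\gamma(b_1)\gamma(b'_1)\,\gamma^{-1}(b_2b'_2),
$$
which define a weak action $\rightharpoonup:B\ot A\to A$ and a $2$-cocycle $\sigma:B\ot B\to A$ realizing $C\simeq A\#_{\rightharpoonup,\sigma}B$ as $B$-comodule algebras. Dualizing this argument, using that for finite-dimensional Hopf algebras the extension is simultaneously cocleft, yields the weak coaction $\rho$ and the dual cocycle $\tau$ that govern the crossed-coproduct coalgebra structure. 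I would then check that the quadruple $(\rightharpoonup,\sigma,\rho,\tau)$ satisfies the compatibility axioms of a Hopf datum in the sense of \cite[Def.~2.26]{andrudevoto}; granting this, the two crossed structures assemble into the Hopf algebra $A\,^{\rho,\tau}\#_{\rightharpoonup,\sigma}B$ and the identification becomes an isomorphism of Hopf algebras. Finite-dimensionality of $C$ is immediate from $\dim C=\dim A\,\dim B$ in Lemma~\ref{resumen sobre ext nec}.

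Part~(ii) runs the construction backwards: for an abstract Hopf datum one verifies directly that $A\,^{\rho,\tau}\#_{\rightharpoonup,\sigma}B$ is a Hopf algebra, that $\imath(a)=a\#1$ is an injective and $\pi(a\#b)=\e(a)b$ a surjective morphism of Hopf algebras, and that $\pi\imath=\e$. The remaining exactness conditions $A=C^{\co\pi}$ and $\ker\pi=A^+C$ then follow from the counit normalization built into the datum together with Lemma~\ref{resumen sobre ext nec}. For part~(iii), the map $\phi$ defines a gauge transformation of the datum, and I would exhibit the isomorphism of the form
$$
a\#b\;\longmapsto\;\sum a\,\phi(b_1)\#b_2,
$$
with inverse built from $\phi^{-1}$, and check that it intertwines the twisted products $\,^\phi\rightharpoonup,\,^\phi\sigma$ with the twisted coproducts $\rho^{\phi^{-1}},\tau^{\phi^{-1}}$; this is a direct computation using $\phi(1)=1$ and $\e\circ\phi=\e$.

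The hard part is the verification in part~(i) that the independently-extracted algebra data $(\rightharpoonup,\sigma)$ and coalgebra data $(\rho,\tau)$ are \emph{mutually} compatible, that is, that they satisfy the full list of Hopf-datum axioms so that the crossed-product multiplication and the crossed-coproduct comultiplication on $A\ot B$ are related by the bialgebra axiom. This bookkeeping in Sweedler notation is precisely what is carried out in \cite[Subsection~3.2]{andrudevoto}, so in the end the statement is an assembly of that reconstruction with Schneider's cleftness theorem, which is why it is recorded here without a self-contained proof.
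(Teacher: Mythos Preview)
Your proposal is correct and matches the paper's approach: the paper does not give a self-contained proof of this theorem at all, but simply records it as a consequence of \cite[Thm.~2.2]{sch} (cleftness of finite-dimensional exact sequences) together with the reconstruction in \cite[Subsection~3.2]{andrudevoto}, exactly as you identify in your final paragraph. Your sketch of how the crossed-product and crossed-coproduct data are extracted and of the gauge transformation in part~(iii) is a faithful unpacking of those references, so there is nothing to add.
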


In particular, the last part of the Theorem says that
$A\#_{\,^\phi\rightharpoonup,\,^\phi\sigma}B\simeq
A\#_{\rightharpoonup,\sigma}B$ as cleft extensions.

\subsection{Cleft extensions of the Sweedler algebra
$T_{-1}$}\label{Cleft extensions of the Sweedler algebra}

Given an algebra $A$ and a Hopf algebra $B$, in general, it is not
easy to find a compatible pair $(\rightharpoonup,\sigma)$ giving
rise to a crossed product $A \#_{\rightharpoonup,\sigma}B$.
However, the classification given by \cite{doitake} and
\cite{masuoka} provides a way to construct all compatible pairs
$(\rightharpoonup,\sigma)$ when $B=T_{-1}$, the Sweedler algebra
of dimension $4$. Explicitly,
\begin{equation}
\begin{array}{c}
T_{-1}=k\langle g,x\mid g²=1, x²=0, xg=-gx\rangle,\\
\noalign{\smallskip}
\D(g)=g\ot g\quad\mbox{and}\quad\D(x)=x\ot g+1\ot x.
\end{array}
\end{equation}

\smallbreak

\begin{definition}\cite[Def. 2.4]{doitake}, \cite[Def. 3.1]{masuoka}.
\label{cleft datos}
Let $A$ be an algebra. A $5$-tuple
$\fD=(F,D,\alpha,\beta,\gamma)$, where $F,D\in\End(A)$,
$\alpha\in\cU(A)$ (the units of $A$) and $\beta,\gamma\in A$ is
called a {\it$T_{-1}$-cleft datum over $A$} if it satisfies:
$$
\begin{array}{l}
\begin{array}{ll}
(\fD1)\,F\mbox{ is an algebra morphism,}&(\fD2)\,D(aa')=aD(a')+D(a)F(a'),\\
\noalign{\smallskip}
(\fD3)\,F^2(a)\alpha=\alpha,&\hspace{-15pt}(\fD4)\,(FD(a)+DF(a))\alpha=\gamma
a-F(a)\gamma,
\end{array}
\\ \noalign{\smallskip}
\begin{array}{ll}
(\fD5)\,D(a)\gamma+D^2(a)\alpha=\beta a-a\beta,&(\fD6)\,F(\alpha)=\alpha,
\qquad(\fD7)\,D(\beta)=0,\\
\noalign{\smallskip}
(\fD8)\,D(\alpha)=\gamma-F(\gamma),&(\fD9)\,D(\gamma)=\beta-F(\beta),
\end{array}
\end{array}
$$
for all $a,a'\in A$.
\end{definition}

\begin{definition}\label{algebra de la cleft datos}
\cite[Thm. 2.3, Def. 2.4]{doitake}, \cite[Prop. 3.4]{masuoka}. If
$\fD=(F,D,\alpha,\beta,\gamma)$ is a $T_{-1}$-cleft datum over
$A$, then $C_{\fD}:=A\#_{\rightharpoonup,\sigma} T_{-1}$ is an
associative algebra where $\rightharpoonup:T_{-1}\ot A\rightarrow
A$ is the weak action given by:
$$
1\rightharpoonup a=a,\,g\rightharpoonup a=F(a),\,x\rightharpoonup a=D(a),
\,(gx)\rightharpoonup a=FD(a)\alpha,
$$
and $\sigma:T_{-1}\ot T_{-1}\rightarrow T_{-1}$ is the $2$-cocycle
given by the following table:

\bigbreak
\begin{center}
\begin{tabular}{|c|c|c|c|c|}
\hline $\sigma$ & $1$ & $g$ & $x$ & $gx$ \\
\hline $1$ & $1$ & $1$ & $0$ & $0$ \\
\hline $g$ & $1$ & $\alpha$ & $0$ & $0$ \\
\hline $x$ & $0$ & $\gamma$ & $\beta$ & $-F(\beta)$ \\
\hline $gx$ & $0$ & $F(\gamma)$ & $F(\beta)$ & $-\alpha\beta$\\
\hline
\end{tabular}
\end{center}
\end{definition}

\smallbreak

The $T_{-1}$-cleft data classify all $T_{-1}$-cleft extensions:

\begin{theorem}\label{isomorfismos de las cleft datos}
\cite[Cor. 2.5, Thm. 2.7]{doitake}, \cite[Prop. 3.4]{masuoka}.
\begin{enumerate}[(i)]
\item If $A\subset C$ is a $T_{-1}$-cleft extension, then it is
isomorphic to $C_{\fD}$ for some $T_{-1}$-cleft datum $\fD$ over
$A$. \item Let $\fD=(F,D,\alpha,\beta,\gamma)$ and
$\fD'=(F',D',\alpha',\beta',\gamma')$ be $T_{-1}$-cleft data over
an algebra $A$. Then $C_{\fD}\simeq C_{\fD'}$ as
$T_{-1}$-extensions if only if there exists element $s\in\cU(A)$
and $t\in A$ such that for all $a\in A$:
$$
\begin{array}{l}
\begin{array}{ll}
(C_\fD1)\quad F'(a)=sF(a)s^{-1},&(C_\fD2)\quad D'(a)=(tF(a)+D(a)-at)s^{-1},\\
\noalign{\smallskip}
(C_\fD3)\quad\alpha'=sF(s)\alpha,&(C_\fD4)\quad\beta'=\beta+t\gamma+(tF(t)+D(t))\alpha,
\end{array}
\\ \noalign{\smallskip}
\begin{array}{l}
(C_\fD5)\quad\gamma'=s\gamma+(tF(s)+D(s)+sF(t))\alpha.
\qquad\square
\end{array}
\end{array}
$$
\end{enumerate}
\end{theorem}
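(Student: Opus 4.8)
The plan is to prove the two parts by reducing to the general crossed-product machinery recalled before Definition~\ref{cleft datos}, exploiting throughout that $T_{-1}$ is generated as an algebra by the two elements $g$ and $x$, whose coproducts $\D(g)=g\ot g$ and $\D(x)=x\ot g+1\ot x$ are explicit.

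For part (i), I would start from the fact, already quoted from \cite[Thm.~7.2.2]{mongomeri}, that a $T_{-1}$-cleft extension $A\subset C$ is isomorphic to a crossed product $A\#_{\rightharpoonup,\sigma}T_{-1}$ for some weak action $\rightharpoonup$ and some $2$-cocycle $\sigma$. I then set $F:=g\rightharpoonup(-)$, $D:=x\rightharpoonup(-)$, $\alpha:=\sigma(g,g)$, $\gamma:=\sigma(x,g)$ and $\beta:=\sigma(x,x)$, and I claim these assemble into a $T_{-1}$-cleft datum. Feeding $\D(g)$ and $\D(x)$ into the weak-action identity $b\rightharpoonup(aa')=\sum(b_1\rightharpoonup a)(b_2\rightharpoonup a')$ gives at once $(\fD1)$ and $(\fD2)$. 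The normalization $\sigma(b,1)=\sigma(1,b)=\e(b)1$ determines the first row and column of the table, while evaluating the cocycle condition and the twisted-module compatibility on the triples built from $g$ and $x$ yields $(\fD3)$--$(\fD9)$; the same compatibilities force the action of the product, $(gx)\rightharpoonup a=FD(a)\alpha$, and the remaining table entries such as $\sigma(x,gx)=-F(\beta)$ and $\sigma(gx,gx)=-\alpha\beta$. The essential point is that, because $T_{-1}=\langle g,x\rangle$ as an algebra, the value of $\sigma$ on any product is determined by its values on the generators, so no data beyond $(F,D,\alpha,\beta,\gamma)$ are free.

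For part (ii), I would describe an isomorphism of cleft extensions concretely. An isomorphism $\Phi\colon C_{\fD}\to C_{\fD'}$ of $T_{-1}$-extensions over $A$ is a $T_{-1}$-colinear algebra map that restricts to the identity on $A$. Since $C_{\fD}$ is generated as an algebra by $A$ together with $1\#g$ and $1\#x$ (indeed $a\#b=(a\#1)(1\#b)$ and $1\#gx=(1\#g)(1\#x)$), such a $\Phi$ is determined by $\Phi(1\#g)$ and $\Phi(1\#x)$. Colinearity, read off against $\D(g)$ and $\D(x)$, forces $\Phi(1\#g)=s\#g$ and $\Phi(1\#x)=t\#g+1\#x$ for some $s,t\in A$, and bijectivity forces $s\in\cU(A)$; these two elements are the parameters in the statement. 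The five relations then arise by demanding that $\Phi$ respect the crossed-product multiplication: comparing $\Phi$ applied to $(1\#g)(a\#1)=F(a)\#g$ and to $(1\#x)(a\#1)$ with the corresponding products computed in $C_{\fD'}$ produces the conjugation relation $(C_\fD1)$ and the relation $(C_\fD2)$; applying $\Phi$ to $(1\#g)^2=\alpha\#1$ and $(1\#x)^2=\beta\#1$ gives $(C_\fD3)$ and $(C_\fD4)$; and applying it to $(1\#x)(1\#g)=\gamma\#1-(1\#g)(1\#x)$, which encodes $xg=-gx$ together with the cocycle value $\gamma$, gives $(C_\fD5)$. Conversely, given $s\in\cU(A)$ and $t\in A$ satisfying $(C_\fD1)$--$(C_\fD5)$, one defines $\Phi$ on the generators by the above formulas and checks that it extends to a well-defined algebra isomorphism, which establishes the `if' direction.

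I expect the difficulty to be computational bookkeeping rather than conceptual. The delicate points are, for part (i), to verify that the nine relations $(\fD1)$--$(\fD9)$ really are equivalent to the full list of weak-action and cocycle axioms, carrying the mixed coproduct of $x$ through the three- and four-fold evaluations without error; and, for part (ii), to track the cocycle table and the sign coming from $xg=-gx$ through each product comparison, and to fix the orientation conventions (which side the unit $s$ conjugates on) so as to match exactly the relations as stated in \cite{doitake, masuoka}.
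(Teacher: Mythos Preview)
The paper does not give its own proof of this theorem: the statement is quoted from \cite[Cor.~2.5, Thm.~2.7]{doitake} and \cite[Prop.~3.4]{masuoka}, and the trailing $\square$ at the end of $(C_\fD5)$ signals that no argument is supplied beyond the citation. So there is nothing in the paper to compare your proposal against.

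That said, your sketch is the standard route taken in those references: for (i), extract $(F,D,\alpha,\beta,\gamma)$ from the weak action and cocycle on the generators $g,x$ and verify that the crossed-product axioms, specialised to $T_{-1}$, reduce to $(\fD1)$--$(\fD9)$; for (ii), parametrise a colinear $A$-linear isomorphism by its values on $1\#g$ and $1\#x$ and read off the five relations from the defining relations $g^2=1$, $x^2=0$, $xg=-gx$ together with the commutation with $A$. Your caveat about orientation is well placed: depending on whether one writes the change of cleaving map as $\gamma'=\phi*\gamma$ or $\gamma'=\gamma*\phi$, and on which direction the isomorphism goes, the formulas for $D'$ and $\gamma'$ pick up $s$ versus $s^{-1}$ and a sign on $t$; a naive computation of $\Phi((1\#x)(a\#1))$ with $\Phi(1\#x)=t\#g+1\#x$ does not literally yield $(C_\fD2)$ as written without that adjustment. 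If you intend to actually write the proof out, it is worth fixing the convention at the outset (the paper's equation for $\phi$ just after the theorem, with $\phi(g)=s$ and $\phi(x)=t$, tells you which one is meant here) rather than leaving it as bookkeeping to be resolved later.
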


Moreover, there is a linear map $\phi:T_{-1}\rightarrow A$ such
that $(^\phi\rightharpoonup,\,^\phi\sigma)=$
$(\rightharpoonup',\sigma')$, see for example \cite[Prop.
3.2.12]{andrudevoto}. It is given explicitly by:
\begin{equation}\label{como es el morfismo que conecta a dos extensiones}
\phi(1)=1, \quad \phi(g)=s,\quad  \phi(x)=t \quad \mbox{ and }
\quad \phi(gx)=sF(t)\alpha.
\end{equation}

\smallbreak Next we list some properties of $T_{-1}$ that will be
useful in the sequel.
\begin{enumerate}
\item[$(T1)$] $\Rad T_{-1}=k\cdot x\oplus k\cdot gx$\, and\,
$hh'=0\ \forall\ h,h'\in\Rad T_{-1}$. \smallbreak \item[$(T2)$]
$\cU(T_{-1})=\{a+bg+h\mid a,b\in k,\,a²-b²\neq0,\,h\in\Rad
T_{-1}\}$ (multiply by $a-bg+h$ and use that $h^2=0$). \smallbreak
\item[$(T3)$] $\{t\in T_{-1}\mid t²=1\}=\{\pm1,\pm g+h\mid
h\in\Rad T_{-1}\}$. \smallbreak \item[$(T4)$] $\forall\ h\in
k[G(T_{-1})]$\, there exists\, $s\in k[G(T_{-1})]$ such that
$s^2=h$ (write the necessary equations to find $s$ and solve
them-- that is possible because $k$ is an algebraically closed
field of characteristic zero).
\end{enumerate}

We end this section by proving a theorem which determines all
possible extensions of $T_{-1}$ by $T_{-1}$ (up to isomorphisms).

\begin{lema}\label{lo que creia que era solo el tensor}
If $T_{-1}\overset{\imath}\hookrightarrow H\overset{\pi}\twoheadrightarrow T_{-1}$
is an exact sequence of Hopf algebras
then $H\simeq T_{-1}\ot T_{-1}$.
\end{lema}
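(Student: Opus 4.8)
The plan is to use the classification of $T_{-1}$-cleft extensions from Theorem~\ref{isomorfismos de las cleft datos} together with the isomorphism criterion, to show that the cleft datum attached to $H$ is equivalent to the trivial one (whose associated algebra is just the tensor product). Since any exact sequence of finite-dimensional Hopf algebras is cleft by \cite[Thm. 2.2]{sch}, the inclusion $T_{-1}\subset H$ is a $T_{-1}$-cleft extension, so by part (i) of Theorem~\ref{isomorfismos de las cleft datos} we have $H\simeq C_{\fD}$ for some $T_{-1}$-cleft datum $\fD=(F,D,\alpha,\beta,\gamma)$ over $A=T_{-1}$. The goal is to reduce $\fD$, via the gauge freedom of choosing $s\in\cU(T_{-1})$ and $t\in T_{-1}$ in conditions $(C_\fD 1)$--$(C_\fD 5)$, to the datum $\fD_0=(\id,0,1,0,0)$, for which the weak action is trivial ($g\rightharpoonup a=a$, $x\rightharpoonup a=0$) and the cocycle $\sigma$ is the one with $\alpha=1$, $\beta=\gamma=0$; one checks directly from \eqref{producto en el smash} and Definition~\ref{algebra de la cleft datos} that $C_{\fD_0}=T_{-1}\#_{\rightharpoonup,\sigma}T_{-1}\simeq T_{-1}\ot T_{-1}$.

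The heart of the argument is to exploit the structure of $T_{-1}$ recorded in $(T1)$--$(T4)$ to normalize $F$, $D$ and then $\alpha,\beta,\gamma$ in turn. First I would analyze the algebra morphism $F\in\End(T_{-1})$. Since $F$ is a unital algebra endomorphism of $T_{-1}$, it must fix the identity and preserve the relations $g^2=1$, $x^2=0$, $xg=-gx$; combined with $F(\Rad T_{-1})\subseteq \Rad T_{-1}$ this forces $F(g)=\pm g+h$ with $h\in\Rad T_{-1}$ and $F$ to act suitably on $x$. Using $(T4)$ to extract a square root and condition $(C_\fD 1)$, $F'(a)=sF(a)s^{-1}$, I would choose $s\in\cU(T_{-1})$ to conjugate $F$ into the identity (the inner automorphisms of $T_{-1}$ together with the constraint $F^2(a)\alpha=\alpha$ from $(\fD 3)$ should pin down that $F$ is conjugate to $\id$). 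Once $F=\id$ is achieved, condition $(C_\fD 2)$, $D'(a)=(tF(a)+D(a)-at)s^{-1}$, becomes $D'(a)=t a + D(a)-at$ after the previous gauge, i.e.\ one can absorb an inner $(g,1)$-type twist; using $(\fD 2)$ (which says $D$ is an $F$-twisted derivation, hence an ordinary derivation once $F=\id$) and choosing $t$ appropriately, I would kill $D$ so that $D'=0$.

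With $F=\id$ and $D=0$ in hand, the remaining cleft-datum axioms collapse dramatically: $(\fD 8)$ gives $\gamma=F(\gamma)$, $(\fD 9)$ gives $\beta=F(\beta)$, $(\fD 4)$ forces $\gamma a=a\gamma$ so $\gamma$ is central, $(\fD 5)$ forces $\beta$ central, and $(\fD 3)$, $(\fD 6)$ say $\alpha$ is central of square $1$ in the appropriate sense. Then the residual gauge transformations $(C_\fD 3)$--$(C_\fD 5)$ with $F=\id$, $D=0$ read $\alpha'=s^2\alpha$, $\gamma'=s\gamma$, $\beta'=\beta+t\gamma$, and choosing $s$ with $s^2=\alpha^{-1}$ via $(T4)$ normalizes $\alpha'=1$, after which $\gamma'=0$ and $\beta'=0$ can be arranged. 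The conclusion is that $\fD$ is isomorphic to $\fD_0$, whence $H\simeq C_{\fD_0}\simeq T_{-1}\ot T_{-1}$.

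I expect the main obstacle to be the first normalization, namely showing that the algebra endomorphism $F$ can be conjugated to the identity: one must carefully list all unital algebra endomorphisms of $T_{-1}$ (ruling out, for instance, $F(g)=-g+h$ and non-automorphic $F$ using the units description $(T2)$ and the constraint $(\fD 3)$ that $\alpha$ be invertible), and then verify that the allowed inner conjugations by $s\in\cU(T_{-1})$ genuinely reach $\id$. The subsequent steps of eliminating $D$ and normalizing $\alpha,\beta,\gamma$ are then comparatively mechanical, relying on $D$ being an honest derivation and on the square-root property $(T4)$.
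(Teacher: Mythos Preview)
Your worry about the first normalization is well founded, and in fact that step fails: the algebra automorphism $F$ of $T_{-1}$ need not be inner. The automorphism $\Ftwist$ determined by $\Ftwist(g)=g$, $\Ftwist(x)=gx$ satisfies $\Ftwist^{2}=\id$ and gives a legitimate cleft datum $\overset{\curvearrowright}{\fD}=(\Ftwist,0,1,0,0)$, yet for $s=a+bg+h\in\cU(T_{-1})$ one computes (using $(T1)$ to discard the radical part, since $hx=xh=0$) that
\[
sxs^{-1}=\frac{(a^{2}+b^{2})\,x+2ab\,gx}{a^{2}-b^{2}};
\]
setting this equal to $gx$ forces $a^{2}+b^{2}=0$ and then $2ab/(a^{2}-b^{2})=\pm i\neq 1$. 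Hence $\Ftwist$ is outer and $\overset{\curvearrowright}{\fD}$ is \emph{not} gauge-equivalent to $\fD_{0}$ as a $T_{-1}$-cleft extension. The paper arrives at precisely this dichotomy, see \eqref{cleft dato trivial}--\eqref{cleft dato no trivial}, and cannot finish by cleft-extension arguments alone; instead it shows that for either datum the resulting $H$ and $H^{*}$ are pointed with $|G(H)|,|G(H^{*})|\le 4$ and contain a normal copy of $T_{-1}$, and then concludes by inspecting the classification list in \cite{pointed16}. The point is that two inequivalent extensions can still have isomorphic middle Hopf algebras, so reducing the cleft datum to $\fD_{0}$ is neither possible nor necessary.

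A second gap is that you never invoke the Hopf structure of the sequence, whereas the paper needs it throughout. Applying the projection $\pi$ to identities such as \eqref{g por g}, \eqref{g por g bis}, \eqref{g por x} and \eqref{x al cuadrado} is what forces $\e(F(g))=1$ (ruling out $F(g)=-g+h$), $\e(\alpha)=1$, $\e(\gamma)=0$ and $\beta=0$; the bare axioms $(\fD1)$--$(\fD9)$ do not give these. Equally, each gauge transformation must obey the constraints in \eqref{aun es exacta} so that Theorem~\ref{recuperar el centro de la ext}\,(\ref{como tiene q ser el morfismo para tener extension de hopf eq}) applies and the new crossed product is still the middle term of an exact sequence of \emph{Hopf} algebras; without this care, even a successful reduction to $\fD_{0}$ would yield only an algebra isomorphism $H\simeq T_{-1}\ot T_{-1}$, not a Hopf algebra isomorphism.
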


\begin{proof}
By \ref{recuperar el centro de la ext}, $H\simeq
T_{-1}\,^{\rho,\tau}\#_{\rightharpoonup,\sigma}T_{-1}$ for some
Hopf datum $(\rightharpoonup,\sigma,\rho,\tau)$. In particular,
$T_{-1}\subset H$ is a $T_{-1}$-cleft extension. So
$T_{-1}\#_{\rightharpoonup,\sigma}T_{-1}\simeq C_{\fD}$ as
algebras, where $\fD$ is a $T_{-1}$-cleft datum over $T_{-1}$.

\smallbreak Our aim is to change the initial $T_{-1}$-cleft datum
$\fD$ by another equivalent but more appropriate, using
\ref{isomorfismos de las cleft datos}, in such a way that we still
have an exact sequence of Hopf algebras. Because of \eqref{como es
el morfismo que conecta a dos extensiones} and \ref{recuperar el
centro de la ext} (\ref{como tiene q ser el morfismo para tener
extension de hopf eq}), this is possible if the following
conditions for $s \in \mathcal{U}(T_{-1})$ and $t\in T_{-1}$ are
satisfied:
\begin{equation}\label{aun es exacta}
\e(s)=1,\qquad \e(t)=0\quad \mbox{ and }\quad\e(F(t))=0.
\end{equation}

Let $\fD=(F,D,\alpha,\beta,\gamma)$ be a $T_{-1}$-cleft datum over
$T_{-1}$. By $(\fD1)$ and $(\fD3)$, $F$ is an algebra automorphism
of $T_{-1}$. Then by (T3), $F(g)=\pm g+h$ for some $h\in\Rad
T_{-1}$. Actually, $F(g)=g+h$. In fact,
\begin{eqnarray}\label{g por g}
(1\# g)(g\# 1)=(g\rightharpoonup g)\sigma(g,1)\ot g=F(g)\# g,
\end{eqnarray}
the last equality follows from \ref{algebra de la cleft datos}. If
we apply $\pi$, the Hopf algebra morphism defined in
\ref{recuperar el centro de la ext}, we find that
$\e(F(g))=\e(g)$. Therefore $F(g)=g + h$.

Let $s=g+\frac{h}{2}$, $t=0$ and $\phi:T_{-1}\rightarrow A$ as in
\eqref{como es el morfismo que conecta a dos extensiones}. Then
the algebra automorphism $F'$ corresponding to the new cleft datum
$\fD'$ equivalent to $\fD$  satisfies
\begin{equation}\label{buscando a sigma 3}
F'(g)= g
\end{equation}
by $(C_\fD1)$; and we still have an exact sequence of Hopf
algebras by  \eqref{aun es exacta}. For simplicity, we still write
$\fD$ for $\fD'$.

\smallbreak We now perform a second change of datum. By $(\fD3)$
with $a=g$, we have that $\alpha\in k[G(T_{-1})]$. Moreover,
$\e(\alpha)=1$ since
\begin{eqnarray}\label{g por g bis}
(1\# g)(1\# g)=(g\rightharpoonup 1)\sigma(g,g)\ot 1=\alpha\# 1,
\end{eqnarray}
the last equality by \ref{algebra de la cleft datos}. Applying
$\pi$, it follows that $\e(\alpha)=1$. By $(T4)$, we can pick
$s\in k[G(T_{-1})]$ such that $s²=\alpha^{-1}$; note that
$F(s)=s$. Moreover, we may assume that $\e(s)=1$ since
$(-s)²=\alpha^{-1}$. Let also $t=0$  and $\phi:T_{-1}\rightarrow
A$ as in \eqref{como es el morfismo que conecta a dos
extensiones}. Then the new cleft datum given as in
\ref{isomorfismos de las cleft datos} (ii) has
\begin{equation}\label{buscando a sigma 4}
\alpha'=1,\,F'(g)=g,
\end{equation}
by $(C_\fD1)$ and $(C_\fD3)$; and by \eqref{aun es exacta}, we
still have an exact sequence of Hopf algebras. Again, we write
$\fD$ instead of $\fD'$.

\smallbreak We now perform a further change of datum. Let $s= 1$,
$t=\frac{g}{2}D(g)$ and $\phi:T_{-1}\rightarrow A$ as in
\eqref{como es el morfismo que conecta a dos extensiones}. By
$(\fD2)$, $D(1)=0$ and therefore $0=gD(g)+D(g)g$ also by $(\fD2)$.
Then $D(g)\in\Rad T_{-1}$. Thus, using $(C_{\fD}2)$, the new cleft
datum defined as in \ref{isomorfismos de las cleft datos} (ii) has
\begin{equation}\label{buscando a sigma 5}
 D(g)=0,\,F(g)=g,\,\alpha=1
\end{equation}
and we still have an exact sequence of Hopf algebras
(note that $F(t)\in\Rad T_{-1}$ since $t\in\Rad T_{-1}$).

\smallbreak We perform still another change of datum,
corresponding to $s=1$ and $t=-\frac{1}{2}\gamma$. Indeed, note
that  $0=\gamma g-g\gamma$, by $(\fD4)$ with $a=g$ and
$(\ref{buscando a sigma 5})$. Then $\gamma\in k[G(T_{-1})]$. We
claim moreover that $\e(\gamma)=0$. In fact,
\begin{equation}\label{g por x}
\begin{array}{rl}
(1\# g)(1\# x)=&(g\rightharpoonup 1)\sigma(g,x)\ot
1+(g\rightharpoonup 1) \sigma(g,1)\ot gx\\=&\gamma\# 1+1\# gx;
\end{array}
\end{equation}
the last equality follows from \ref{algebra de la cleft datos}.
Applying $\pi$, it follows that $\e(\gamma)=0$. Then the new cleft
datum has
\begin{equation}\label{buscando a sigma 6}
\gamma=0,\,F(g)=g,\,D(g)=0,\,\alpha=1
\end{equation}
by $(C_{\fD}5)$; and we still have an exact sequence of Hopf
algebras-- note that $F(\gamma)=\gamma$.

\smallbreak We perform the last change of datum, corresponding to
$s=g$ and $t=0$. Since $F$ is an algebra morphism, there exists
$a,b\in k$ such that
$$
F_{|\Rad T_{-1}}=\left(\begin{matrix} a&b\\b&a
                       \end{matrix}\right)
$$
on the basis $\{x,gx\}$. As $\alpha=1$, $F²=\id$ by $(\fD3)$. Then
either $a=\pm1$ and $b=0$ or $a=0$ and $b=\pm1$. By $(C_{\fD}1)$,
the new cleft datum has either
\begin{align}\label{buscando a sigma 7}
F&=\id, & D(g)&=0,&\quad \alpha&=1 &\mbox{ and }\gamma & =0
\quad\mbox{ or }
\\\label{buscando a sigma 8}
F(g)&=g,\quad  F(x)=gx,& D(g)&=0, & \alpha  &=1 &\mbox{ and
}\gamma &=0.
\end{align}
In both cases, we still have exact sequences of Hopf algebras.

\smallbreak We next claim that $D=0$ and $\beta = 0$. In
$(\ref{buscando a sigma 7})$, $D=0$ by $(\fD4)$; hence $\beta\in
k$ (the center of $T_{-1}$) by $(\fD5)$. In (\ref{buscando a sigma
8}), $0=xD(x)+D(x)gx$ by $(\fD2)$, hence $D(x)xg=xD(x)$. If we
write $D(x)=c+dg+h$ with $c,d\in k$ and $h\in\Rad T_{-1}$, then
$$
(c+dg)xg=x(c+dg)\Rightarrow d=c=-d.
$$
Therefore $D(x)=h\in\Rad T_{-1}$. Moreover, since $D(g)=0$,
$D(gx)=gD(x)$. Now, since $\alpha=1$, $\gamma=0$ and $F(h)=gh$ for
all $h\in\Rad T_{-1}$, we see from  $(\fD4)$ that
$$
0=FD(x)+DF(x)=F(h)+D(gx) =gh+gD(x)=gh+gh=2gh.
$$
Therefore $D = 0$, and $\beta$ must belong to $k$ too by $(\fD5)$.
In both cases, we see by \ref{algebra de la cleft datos} that
$x\rightharpoonup a=D(a)=0$, $\sigma(x,1)=\sigma(1,x)=x²=0$ and
\begin{equation}\label{x al cuadrado}
(1\# x)(1\# x)=\sigma(x,x)\ot1=\beta\ot1.
\end{equation}
Applying $\pi$, since $\beta\in k$, it follows that $\beta=0$.

\smallbreak We define the algebra morphism $\Ftwist$ by
$\Ftwist(g):=g$  and $\Ftwist(x):=gx$. Then $H$ must be isomorphic
as algebra to $C_{\fD}$ where $\fD$ is one of the following cleft
data:
\begin{align}\label{cleft dato trivial}
\fD_0 &:=(\id,0,1,0,0)\mbox{ or}
\\\label{cleft dato no trivial}
\overset\curvearrowright\fD &:=(\Ftwist,0,1,0,0).
\end{align}

Our next aim is to show that
\begin{align}\label{cota radical} \Rad(T_{-1})\ot T_{-1}+T_{-1}\ot\Rad(T_{-1})\subseteq\Rad H.
\end{align}

If $\fD=\fD_0$, then $H\simeq T_{-1}\ot T_{-1}$ as algebras and
\eqref{cota radical} follows. If
$\fD=\overset{\curvearrowright}\fD$ then $H\simeq
T_{-1}\#_{\rightharpoonup,\sigma}T_{-1}$. Here
$(\rightharpoonup,\sigma)$ is given by
\begin{align}\label{posible accion debil}
1\rightharpoonup h=h,\quad g\rightharpoonup
h&=\left\lbrace\begin{matrix}h&h\in k[G(T_{-1})]\\gh&h\in\Rad
T_{-1}
\end{matrix}\right.,\quad x\rightharpoonup h=gx\rightharpoonup
h=0;
\\\label{posible 2cociclo}
\sigma(h,h')&=\e(h)\e(h'),
\end{align}
for all $h,h'\in T_{-1}$, by \ref{algebra de la cleft datos}. By
explicit calculations, we have that $\Rad T_{-1}\ot T_{-1}$ and
$T_{-1}\ot\Rad T_{-1}$ are nilpotent ideals of $H$. Then
\eqref{cota radical} follows.

Now \eqref{cota radical} implies that $$\dim(H^*)_0\leq\dim
(H/(\Rad(T_{-1})\ot T_{-1}+T_{-1}\ot\Rad(T_{-1}))) = 16-12=4.
$$

Therefore, any simple representation of $H$ is one-dimensional,
i.e., $H^*$ is pointed. Moreover, since $(T_{-1})^*\simeq T_{-1}$,
$H^*$ is also an extension of $T_{-1}$ by $T_{-1}$. Therefore
$(H^*)^*\simeq H$ is pointed too.

Summarizing, $H$ and $H^*$ are pointed,  the groups $G(H)$ and
$G(H^*)$ have order $\leq 4$ and both contain a normal Sweedler
Hopf subalgebra. By inspection in the classification list of
pointed Hopf algebras of dimension 16 given in \cite{pointed16},
we see that $H$ must be isomorphic to $T_{-1}\ot T_{-1}$.
\end{proof}

\section{Non-semisimple Hopf algebras of dimension 8}
\label{la clasificacion de stefan} We shall need the
classification of the non-semisimple Hopf algebras of dimension 8
\cite{stefan}. We give this list, including the defining relations
of the algebra structure and the comultiplication in terms of the
generators. Let $i$ be a primitive 4-root of 1.
$$
\begin{array}{ll}
{\mathcal A}_2:=&k\langle g,x,y\mid
g^2-1=x^2=y^2=gx+xg=gy+yg=xy+yx=0\rangle,
\\ \noalign{\smallskip}
&\D(g)=g\ot g,\quad\D(x)=x\ot g+1\ot x,\quad\D(y)=y\ot g+1\ot y.
\end{array}
$$
$$
\begin{array}{l}
{\mathcal A}'_4:=k\langle g,x\mid g^4-1=x^2=gx+xg=0\rangle,
\\ \noalign{\smallskip}
\hspace{2cm}\D(g)=g\ot g,\quad\D(x)=x\ot g+1\ot x;
\\ \noalign{\vspace{.3cm}}
 {\mathcal A}''_4:=k\langle g,x\mid g^4-1=x^2-g^2+1=gx+xg=0\rangle,
\\ \noalign{\smallskip}
\hspace{2cm}\D(g)=g\ot g,\quad\D(x)=x\ot g+1\ot x;
\\ \noalign{\vspace{.3cm}}
{\mathcal A}'''_{4,i}:k\langle g,x\mid g^4-1=x^2=gx-ixg=0\rangle,
\\ \noalign{\smallskip}
\hspace{2cm}\D(g)=g\ot g,\quad\D(x)=x\ot g^2+1\ot x;
\\ \noalign{\vspace{.3cm}}
{\mathcal A}_{2,2}:=k\langle g,h,x\mid g^2=h^2=1, \,
x^2=gx+xg=hx+xh=gh-hg=0\rangle,
\\ \noalign{\smallskip}
\hspace{2cm}\D(g)=g\ot g,\quad\D(h)=h\ot h,\quad\D(x)=x\ot g+1\ot
x.
\end{array}
$$

\smallbreak

\begin{rmk}\label{para cuando necesite una sweedler}
There are the following isomorphisms: ${\mathcal
A}_2\simeq({\mathcal A}_2)^*$, ${\mathcal
A}'''_{4,i}\simeq{\mathcal A}'''_{4,-i}\simeq({\mathcal A}'_4)^*$
 and ${\mathcal A}_{2,2}\simeq({\mathcal A}_{2,2})^*$ \cite{stefan}.
Moreover, one can check case-by-case that all these Hopf algebras
have Hopf subalgebras isomorphic to $T_{-1}$.
\end{rmk}

\smallbreak

\subsection{The unique Hopf algebra of dimension 8 which does not have
the Chevalley property} \label{la unica} By \cite{stefan},
$\cA:=({\mathcal A}''_4)^*$ is the unique Hopf algebra of
dimension 8 neither semisimple nor pointed; its coradical is
$\cA_0=k[C_2]\mas\coM$ and $\cA$ is generated as an algebra by
$\coM$.

\smallbreak We next compute explicitly the multiplication of the
elements of a comatrix  basis of $\coM$. For this, we first
describe the simple representations of ${\mathcal A}''_4$. Let $g$
and $x$ be the generators of ${\mathcal A}''_4$.

\begin{lema}\label{las representations de cA dual}
The simple one-dimensional representations  of ${\mathcal A}''_4$
are $\e$ and $\alpha: {\mathcal A}''_4 \longmapsto k$, where
\begin{equation}\label{gr like de A}
\alpha(g) = -1,\quad \alpha(x) = 0.
\end{equation}
The unique (up to isomorphisms) simple representation of dimension
$2$ of ${\mathcal A}''_4$ is
$\rho:\cA^\ast\longmapsto\mathcal{M}(2,k)$,
\begin{equation}\label{comatrixes de A}
\rho(g)=\left(\begin{matrix}i&0\\0&-i\end{matrix}\right),\quad
\rho(x)=\left(\begin{matrix}0&2\\-1&0\end{matrix}\right).
\end{equation}
\end{lema}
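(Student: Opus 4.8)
The plan is to classify all simple $\mathcal{A}''_4$-modules directly and then to identify them with the data in \eqref{gr like de A} and \eqref{comatrixes de A}; recall that $\cA^\ast\simeq\mathcal{A}''_4$, so a representation of $\cA^\ast$ is the same thing as an $\mathcal{A}''_4$-module. First I would treat the one-dimensional case. An algebra map $\chi\colon\mathcal{A}''_4\to k$ must satisfy $\chi(g)^4=1$, $\chi(x)^2=\chi(g)^2-1$ and, since $k$ is commutative and $\operatorname{char}k=0$, the relation $gx+xg=0$ gives $2\chi(g)\chi(x)=0$, hence $\chi(g)\chi(x)=0$. Because $\chi(g)^4=1$ forces $\chi(g)\neq0$, we get $\chi(x)=0$ and then $\chi(g)^2=1$; this produces exactly the two characters $\e$ and $\alpha$ of \eqref{gr like de A}.

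Next I would analyse a simple module $\rho$ of dimension $2$. Since $g^4=1$ and $t^4-1$ has distinct roots over $k$, the matrix $\rho(g)$ is diagonalizable with eigenvalues in $\{1,-1,i,-i\}$. If $\rho(g)$ were scalar, anticommutation would give $2\rho(g)\rho(x)=0$, so $\rho(x)=0$ and $\rho$ would decompose as a sum of characters, against simplicity; thus $\rho(g)$ has two distinct eigenvalues $\lambda,\mu$. In a basis of eigenvectors $\rho(g)=\left(\begin{smallmatrix}\lambda&0\\0&\mu\end{smallmatrix}\right)$, and $gx=-xg$ forces $\rho(x)$ to be off-diagonal; since simplicity rules out $\rho(x)=0$, comparing eigenvalues yields $\mu=-\lambda$. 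Writing $\rho(x)=\left(\begin{smallmatrix}0&b\\c&0\end{smallmatrix}\right)$, the relation $x^2=g^2-1$ becomes $bc=\lambda^2-1$. If $\lambda^2=1$ then $bc=0$, so $\rho(x)$ is a nonzero nilpotent and $\rho$ has an invariant line, a contradiction. Hence $\lambda^2=-1$, i.e.\ $\{\lambda,\mu\}=\{i,-i\}$ and $bc=-2$; a diagonal change of basis normalizes $(b,c)$ to $(2,-1)$, giving precisely \eqref{comatrixes de A}. A short direct computation then confirms that these matrices satisfy $\rho(g)^4=\id$, $\rho(x)^2=\rho(g)^2-\id$ and $\rho(g)\rho(x)+\rho(x)\rho(g)=0$, and that the resulting module is irreducible.

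It remains to check that $\e$, $\alpha$ and $\rho$ exhaust the simple modules. For this I would use that $\sum_S(\dim S)^2=\dim\left(\mathcal{A}''_4/\Jac(\mathcal{A}''_4)\right)\leq 8$, with $S$ ranging over a set of representatives of the isomorphism classes of simple modules. Since we have already found $1^2+1^2+2^2=6$, any further simple module of dimension $\geq 3$ would contribute at least $9$ and overshoot $8$, while there is no third character by the first step and no further two-dimensional simple by the second; dually, this matches the coradical $\cA_0=k[C_2]\mas\coM$ of $\cA$ recalled in \S\ref{la unica}. The step I expect to be the main obstacle is the simplicity bookkeeping in the two-dimensional case: one must carefully eliminate the scalar case and the eigenvalue pair $\{1,-1\}$, the latter being exactly the configuration where $x^2=g^2-1$ degenerates to $\rho(x)^2=0$ and forces reducibility.
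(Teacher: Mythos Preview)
Your proof is correct and follows essentially the same route as the paper: diagonalize $\rho(g)$, use the anticommutation relation to force $\rho(x)$ off-diagonal and $\mu=-\lambda$, then use $x^2=g^2-1$ to pin down $\lambda^2=-1$ and $bc=-2$. The only minor difference is the exhaustion step: the paper simply invokes the known coradical $\cA_0=k[C_2]\oplus\coM$ from \cite{stefan}, whereas you give a self-contained Artin--Wedderburn dimension count (and then also note the coradical match); both arguments are immediate.
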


\begin{proof}
For simplicity, if $(V,\varrho)$ is simple representation of
$\cA''_4$, we write $a$ instead of $\varrho(a)\in\End V$.

In case that $\dim V=1$, from relations $g^4=1$ and $xg=-gx$ it
follows that $g$ is a 4-root of 1 and $x=0$. Then we have $g^2=1$,
by the relation $x^2-g^2+1=0$. So that either $g=1$ or $g=-1$.
This defines $\eps$ and $\alpha$ respectively.

\smallbreak In case that $\dim V=2$, by $g^4=\id$, we can choose a
basis of $V$ consisting of eigenvectors of $g$. Then the
eigenvalues of $g$ are different 4-roots of 1. In fact, if they
are equal then $x=0$ (by $xg=-gx$) but the representation
$\varrho(g)=i^{j}\cdot\id,\ 0\leq j\leq 3,\ \varrho(x)=0$ is not
simple. Let $\xi$ and $\eta$ be the eigenvalues of $g$. Then
\begin{equation}
xg=\left(\begin{matrix}\xi x_{11}&\eta x_{12}\\
\xi x_{21}&\eta x_{22}\end{matrix}\right)=\left(\begin{matrix}
-\xi x_{11}&-\xi x_{12}\\ -\eta x_{21}&-\eta
x_{22}\end{matrix}\right)=-gx,
\end{equation}
so $x_{11}=x_{22}=0$. Moreover, $x_{12}\neq0\neq x_{21}$. Indeed,
both $x_{12}\neq 0$ and $x_{21}\neq 0$, because the representation
is simple. Therefore $\eta=-\xi$. By $0=x^2-g^2+\id$, we have that
\begin{equation}
0=\left(\begin{matrix}x_{12}x_{21}-\xi^2+1&0\\
0&x_{12}x_{21}-\xi^2+1\end{matrix}\right).
\end{equation}
Since $x_{12}\neq0\neq x_{21}$, it follows that $\xi^2\neq1$.
Therefore $\xi$ is a primitive 4-root of 1 and $x_{12}x_{21}=-2$.
Taking $x_{12}=2$ and $x_{21}=-1$, we find $\rho$.

Since $(\cA''_4)^*_0=\cA_0=k[C_2]\mas\coM$, the lemma follows.
\end{proof}

Let $(k^2,\rho)$ be the 2-dimensional representation given by
\ref{las representations de cA dual}. Let $\{E_{ij}\mid1\leq
i,j\leq2\}$ be the coordinate functions of $\mathcal{M}(2,k)$. If
$e_{ij}:=E_{ij}\circ\rho$, then $\cE_\cA:=\{e_{ij}\mid1\leq
i,j\leq2\}$ is a comatrix basis of the simple subcoalgebra of
$\cA$ isomorphic to $\coM$.

\begin{lema}\label{relaciones de comatrixes de cA}
The elements of $\cE_\cA$ satisfy:
\begin{equation}\label{le1}
\begin{array}{cccc}
S(e_{11})=e_{22},&S(e_{22})=e_{11},&S(e_{12})=-\xi
e_{12},&S(e_{21})=\xi e_{21},
\end{array}
\end{equation}
\begin{equation}\label{le2}
\begin{array}{cc}
e_{11}^2=e_{22}^2=\alpha,&e_{12}^2=e_{21}^2=0,
\end{array}
\end{equation}
\begin{equation}\label{le3}
\begin{array}{cc}
e_{11}e_{22}=e_{22}e_{11}=\eps,&e_{12}e_{21}=e_{21}e_{12}=0,
\end{array}
\end{equation}
\begin{equation}\label{le4}
\begin{array}{cc}
e_{12}e_{11}=\xi e_{11}e_{12},&e_{21}e_{11}=\xi e_{11}e_{21},
\end{array}
\end{equation}
\begin{equation}\label{le5}
\begin{array}{cc}
e_{12}e_{22}=-\xi e_{22}e_{12},&e_{21}e_{22}=-\xi e_{22}e_{21}.
\end{array}
\end{equation}
In particular we have that
\begin{equation}
\D(e_{11}e_{12})=e_{11}e_{12}\ot\eps+\alpha\ot e_{11}e_{12},
\end{equation}
\begin{equation}
\D(e_{11}e_{21})=e_{11}e_{21}\ot\alpha+\eps\ot e_{11}e_{21},
\end{equation}
i.e., $e_{11}e_{12}$ and $e_{11}e_{21}$ are the non-trivial
skew-primitives of $\cA$.
\end{lema}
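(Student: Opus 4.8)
The multiplication on $\cA=(\cA''_4)^*$ is the convolution product dual to the coproduct of $\cA''_4$: for $f,h\in\cA$ and $a\in\cA''_4$ one has $(f\cdot h)(a)=\sum f(a_1)h(a_2)$, the unit being the counit $\e$ of $\cA''_4$. So the plan is to verify every claimed identity by evaluating both sides on the basis $\{g^k,\,g^kx\mid 0\le k\le 3\}$ of $\cA''_4$, using $\D(g^k)=g^k\ot g^k$ and $\D(g^kx)=g^kx\ot g^{k+1}+g^k\ot g^kx$. Writing $e_{ij}(a)=\rho(a)_{ij}$ for $\rho$ as in \eqref{comatrixes de A} and setting $\xi=i$, $\eta=-\xi$, the only nonzero values are $e_{11}(g^k)=\xi^k$, $e_{22}(g^k)=\eta^k$, $e_{12}(g^kx)=2\xi^k$ and $e_{21}(g^kx)=-\eta^k$; moreover $\e(g^k)=1$ and $\alpha(g^k)=\xi^{2k}=(-1)^k$, while $\e$ and $\alpha$ vanish on the elements $g^kx$.

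The structural reason behind \eqref{le2}--\eqref{le5} is that each product $e_{ij}e_{kl}$ is a matrix coefficient of the tensor square $\rho\ot\rho$ of $\cA''_4$ on $V\ot V$, with $V=k^2$. Since $g$ acts on $V$ with eigenvalues $\pm\xi$, we have $g^2=-\id$ on $V$, hence $g^2\ot g^2=\id$ and $x^2=g^2-1=0$ on $V\ot V$; thus $\rho\ot\rho$ factors through the Sweedler quotient $\cA''_4/(g^2-1)\simeq T_{-1}$, whose only simple modules are one-dimensional. This shows a priori that every product $e_{ij}e_{kl}$ lies in the span of the matrix coefficients of one-dimensional representations — that is, of the group-likes $\e$ and $\alpha$ — together with the skew-primitive (radical) part. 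Evaluating as above then yields \eqref{le2} and \eqref{le3} at once; for instance $e_{11}^2(g^k)=\xi^{2k}=\alpha(g^k)$ and $e_{11}^2(g^kx)=0$, while $e_{12}^2$ and $e_{12}e_{21}$ vanish on the whole basis. The commutation relations \eqref{le4}--\eqref{le5} come from the asymmetry of $\D(g^kx)$: the factor $g^{k+1}$ contributes one extra eigenvalue of $g$, equal to $\xi$ against the index $1$ and to $-\xi$ against the index $2$, so that $e_{12}e_{11}=\xi\,e_{11}e_{12}$ and $e_{12}e_{22}=-\xi\,e_{22}e_{12}$, and likewise for $e_{21}$.

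For the antipode formulas \eqref{le1} I would use that, for $f\in\cA$, the antipode of $\cA$ is $f\mapsto f\circ S_{\cA''_4}$, where $S_{\cA''_4}(g)=g^{-1}$ and $S_{\cA''_4}(x)=-xg^{-1}$; evaluating $\rho(S_{\cA''_4}(a))_{ij}$ on the basis gives $S(e_{11})=e_{22}$, $S(e_{22})=e_{11}$, $S(e_{12})=-\xi e_{12}$ and $S(e_{21})=\xi e_{21}$ (alternatively these follow from $\sum_l e_{il}S(e_{lj})=\delta_{ij}\e$ together with \eqref{le2}--\eqref{le3}). Finally, for the two displayed coproducts I would apply the algebra morphism $\D$ to $e_{11}e_{12}$ and use the comatrix coproduct: expanding $\D(e_{11})\D(e_{12})$ into four terms and simplifying with $e_{11}^2=\alpha$, $e_{11}e_{22}=\e$, $e_{12}e_{21}=0$ and $e_{12}^2=0$ leaves precisely $\alpha\ot e_{11}e_{12}+e_{11}e_{12}\ot\e$, and symmetrically $\e\ot e_{11}e_{21}+e_{11}e_{21}\ot\alpha$. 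Hence $e_{11}e_{12}\in\cP_{\alpha,\e}(\cA)$ and $e_{11}e_{21}\in\cP_{\e,\alpha}(\cA)$, and they are non-trivial since $\alpha\neq\e$. The only real difficulty is bookkeeping: one must keep track of the noncommutativity responsible for the phases $\pm\xi$ in \eqref{le4}--\eqref{le5} and of the non-semisimple behaviour (products that either vanish or land in the radical rather than among the group-likes); no individual step is genuinely hard.
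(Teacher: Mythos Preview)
Your proof is correct and uses the same core method as the paper: evaluate each identity on the basis $\{g^k,\,g^kx\}$ of $\cA''_4$, using the convolution product and the explicit coproducts $\D(g^k)=g^k\ot g^k$, $\D(g^kx)=g^kx\ot g^{k+1}+g^k\ot g^kx$. The paper carries out these evaluations one by one, and also observes (as you do not) that \eqref{le5} follows from \eqref{le1} and \eqref{le4} by applying the antipode, which saves half the work there.

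Where you differ is in the added representation-theoretic framing: you note that the products $e_{ij}e_{kl}$ are matrix coefficients of $\rho\ot\rho$, and that $g^2\ot g^2=\id$ forces this tensor square to factor through the Sweedler quotient $\cA''_4/(g^2-1)\simeq T_{-1}$, so every such product must a priori lie in the span of $\e,\alpha$ and skew-primitives. This is a genuine conceptual gain over the paper's bare computation --- it explains \emph{why} the products collapse so dramatically --- though in the end you still evaluate on the basis to pin down the exact constants, just as the paper does. Your heuristic for the phases $\pm\xi$ in \eqref{le4}--\eqref{le5} (the diagonal factor picks up one extra eigenvalue of $g$ when the order is swapped, because of the shift $g^k\mapsto g^{k+1}$ in $\D(g^kx)$) is also correct and more illuminating than the paper's line-by-line verification.
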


\begin{proof}
Since $\cA=(\cA''_4)^*$, the multiplication of $\cA$ is given by
the convolution product and the antipode of $\cA$ by
$S(a)=a\circ\cS $ for all $a\in\cA$, with $\cS$ the antipode of
$\cA''_4$.

Note that $\{g^nx^m\mid0\leq n\leq3,0\leq m\leq1\}$ is a basis of
$\cA''_4$, $\cS(g)=g^{-1}$ and $\cS(x)=-xg^{-1}$ \cite{stefan}.
Then, by \ref{las representations de cA dual}, we have
$$
S(e_{11})(g^n)=e_{11}(\cS(g^n))=e_{11}(g^{-n})=\xi^{-n}=(-\xi)^n=e_{22}(g^n)
$$
$$
\mbox{and
}\,S(e_{11})(g^nx)=e_{11}(\cS(g^nx))=e_{11}(-xg^{-n-1})=0=e_{22}(g^nx),
$$
then $S(e_{11})=e_{22}$. Similarly, we prove $S(e_{22})=e_{11}$.
Clearly, $S(e_{12})(g^n)=S(e_{21})(g^n)=0$ for all $n$. Moreover,
by \ref{las representations de cA dual},
$$
\begin{array}{ll}
S(e_{12})(g^nx)&=e_{12}(\cS(g^nx))=e_{12}(-xg^{-n-1})
\\&=-2(-\xi)^{-n-1}=-2\xi\xi^{n}=-\xi e_{12}(g^nx),
\end{array}
$$
then $S(e_{12})=-\xi e_{12}$. Similarly, we prove $S(e_{21})=\xi
e_{21}$ and (\ref{le1}) follows.

\smallbreak Since $g$ is a group-like, we have
$$
e_{11}^2(g^n)=(e_{11}(g^n))^2=\xi^{2n}=(-1)^n=\alpha(g^n)\mbox{ and}
$$
$$
e_{12}^2(g^n)=(e_{12}(g^n))^2=0,
$$
for all $n$. Since $x$ is a $(1,g)$-primitive, then
$$
e_{11}^2(g^nx)=e_{11}(g^nx)e_{11}(g^{n+1})+e_{11}(g^n)e_{11}(g^nx)=0=\alpha(g^nx)\mbox{
and}
$$
$$
e_{12}^2(g^nx)=e_{12}(g^nx)e_{12}(g^{n+1})+e_{12}(g^n)e_{12}(g^nx)=0.
$$
Therefore $e_{11}^2=\alpha$ and $e_{12}^2=0$. Analogously, we
prove $e_{22}^2=\alpha$ and $e_{21}^2=0$ and (\ref{le2}) follows.

\smallbreak From similar calculations it follows that
$e_{12}e_{21}=0=e_{21}e_{12}$. Then, by (\ref{le1}) and the
definition of antipode, we get $e_{11}e_{22}=\e=e_{22}e_{11}$ and
(\ref{le3}) follows.

\smallbreak If (\ref{le4}) holds, (\ref{le5}) also holds. In fact,
(\ref{le5}) follows from (\ref{le1}) and (\ref{le4}). Since $g\in
G(\cA''_4)$, $e_{11}e_{12}(g^n)=e_{12}e_{11}(g^n)=0$. On the other
hand
$$
\begin{array}{rl}
e_{11}e_{12}(g^nx)&=e_{11}(g^nx)e_{12}(g^{n+1})+e_{11}(g^n)e_{12}(g^nx)\\
\noalign{\smallskip}
&=e_{11}(g^n)e_{12}(g^nx)=\xi^n\xi^n2=(-1)^n2\quad\mbox{and}
\\ \noalign{\smallskip}
e_{12}e_{11}(g^nx)&=e_{12}(g^nx)e_{11}(g^{n+1})+e_{12}(g^n)e_{11}(g^nx)\\
\noalign{\smallskip}
&=e_{12}(g^nx)e_{11}(g^{n+1})=\xi^n2\xi^{n+1}=(-1)^n2\xi,
\end{array}
$$
then $e_{12}e_{11}=\xi e_{11}e_{12}$. Also
$e_{11}e_{21}(g^n)=e_{21}e_{11}(g^n)=0$ and
$$
\begin{array}{ll}
e_{11}e_{21}(g^nx)&=e_{11}(g^nx)e_{21}(g^{n+1})+e_{11}(g^n)e_{21}(g^nx)\\
\noalign{\smallskip} &=e_{11}(g^n)e_{21}(g^nx)=-\xi^n(-\xi)^n=-1,
\\\noalign{\smallskip}
e_{21}e_{11}(g^nx)&=e_{21}(g^nx)e_{11}(g^{n+1})+e_{21}(g^n)e_{11}(g^nx)\\
\noalign{\smallskip}
&=e_{21}(g^nx)e_{11}(g^{n+1})=-(-\xi)^n\xi^{n+1}=-\xi,
\end{array}
$$
then $e_{21}e_{11}=\xi e_{11}e_{21}$ and (\ref{le4}) follows.

\smallbreak Since $\D$ is an algebra morphism, $e_{11}e_{12}$ and
$e_{11}e_{21}$ are skew-primitive by \eqref{le2} and \eqref{le3}.
Since $(\e-\alpha)(g)\neq0$, they also are non-trivial.
\end{proof}

Let $T$ be the Hopf subalgebra of $\cA$ generated by $\alpha$ and
$y:=e_{11}e_{21}$. Note that it is isomorphic to $T_{-1}$. Let
$C_2=\langle c\rangle$ be the cyclic group of order two. We end
this section with the following lemma that will be needed later.

\begin{lema}\label{proyections desde la unica}
(i) If $\pi:\cA\rightarrow T_{-1}$ is a morphism of Hopf algebras,
then $\pi(\cA)\subseteq k[G(T_{-1})]$ and
$T\subseteq\cA^{\co\pi}$.

(ii) $\cA$ fits into an exact sequence of Hopf algebras
$T\overset{\imath}\hookrightarrow\cA\overset{\psi}\twoheadrightarrow
k[C_2]$.
\end{lema}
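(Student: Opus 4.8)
The plan is to derive both statements from the structure of $\cA=(\cA''_4)^*$ and the relations of Lemma \ref{relaciones de comatrixes de cA}, passing to the dual $\cA^*=\cA''_4$ for the global input.

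\smallbreak For (i), the crucial point is that no Hopf algebra morphism $\pi\colon\cA\to T_{-1}$ can be surjective. Granting this, $\pi(\cA)$ is a proper Hopf subalgebra of $T_{-1}$; its dimension divides $4$ and is $<4$, hence is $\le 2$, so $\pi(\cA)\subseteq k[G(T_{-1})]$, the unique two-dimensional Hopf subalgebra. To rule out surjectivity I would dualize: since $\cA^*=\cA''_4$ and $(T_{-1})^*\simeq T_{-1}$, a surjective $\pi$ yields an injective morphism of Hopf algebras $\pi^*\colon T_{-1}\hookrightarrow\cA''_4$, i.e. a Hopf subalgebra of $\cA''_4$ isomorphic to $T_{-1}$. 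But $G(\cA''_4)=\langle g\rangle$ is cyclic of order $4$, so $g^2$ is its only element of order $2$, while a short computation with $\D$ gives $\cP_{1,g^2}(\cA''_4)=k\,(1-g^2)$; since a copy of $T_{-1}$ requires an order-two group-like together with a \emph{non-trivial} skew-primitive over it, no such subalgebra exists.

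\smallbreak With $\pi(\cA)\subseteq k[G(T_{-1})]$ in hand, I finish (i) by showing $\alpha,y\in\cA^{\co\pi}$, which suffices because the coinvariants form a subalgebra containing $T=k\langle\alpha,y\rangle$. Put $p=\pi(e_{11})\in k[G(T_{-1})]$. From $S(e_{11})=e_{22}$ and the fact that $\pi$ commutes with antipodes we get $\pi(e_{22})=S(p)=p$, the antipode being the identity on $k[G(T_{-1})]$ since $g^2=1$ there; hence $e_{11}e_{22}=\e$ forces $\pi(\alpha)=\pi(e_{11})^2=p\,S(p)=1$, so $\alpha$ is coinvariant. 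Then $\D(y)=y\ot\alpha+1\ot y$ shows $\pi(y)$ is a $(1,\pi(\alpha))=(1,1)$-primitive of $T_{-1}$, whence $\pi(y)=0$ (a finite-dimensional Hopf algebra in characteristic zero has no non-zero primitives), so $y$ is coinvariant as well.

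\smallbreak For (ii) I would exhibit the quotient explicitly. Writing $C_2=\langle c\rangle$, define $\psi\colon\cA\to k[C_2]$ by $\psi(e_{11})=\psi(e_{22})=c$ and $\psi(e_{12})=\psi(e_{21})=0$; using Lemma \ref{relaciones de comatrixes de cA} one checks routinely that $\psi$ is a well-defined surjective morphism of Hopf algebras (it is dual to the inclusion $k[\langle g^2\rangle]\hookrightarrow\cA''_4$). Since $\psi(\alpha)=\psi(e_{11})^2=1$ and $\psi(y)=\psi(e_{11})\psi(e_{21})=0$, the computation above gives $T\subseteq\cA^{\co\psi}$. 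By Lemma \ref{resumen sobre ext nec}, $\dim\cA^{\co\psi}=\dim\cA/\dim k[C_2]=4=\dim T$, so $T=\cA^{\co\psi}$; as $T$ is a Hopf subalgebra, the same lemma produces the exact sequence $T\overset{\imath}\hookrightarrow\cA\overset{\psi}\twoheadrightarrow k[C_2]$.

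\smallbreak The main obstacle is the non-surjectivity claim in (i). One would like to argue that $\pi(\coM)$ already lands in the coradical, but this cannot be done by soft coalgebra reasoning: a quotient of the cosemisimple coalgebra $\coM$ need not be cosemisimple (indeed $\coM$ has a three-dimensional quotient isomorphic to the subcoalgebra $k\cdot 1+k\cdot g+k\cdot x$ of $T_{-1}$), so the Hopf algebra structure is genuinely needed, and the dual computation excluding a Sweedler Hopf subalgebra in $\cA''_4$ is the cleanest way to supply it.
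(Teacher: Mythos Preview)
Your proof is correct and follows essentially the same route as the paper: both argue non-surjectivity by dualizing and showing $\cA''_4$ contains no copy of $T_{-1}$, then deduce $T\subseteq\cA^{\co\pi}$ and finish (ii) via the dual of $k[\langle g^2\rangle]\hookrightarrow\cA''_4$ together with Lemma~\ref{resumen sobre ext nec}. The only cosmetic difference is that the paper excludes $T_{-1}\subset\cA''_4$ by noting that the unique order-two group-like $g^2$ is central (whereas the generator of $G(T_{-1})$ is not), rather than by your computation $\cP_{1,g^2}(\cA''_4)=k(1-g^2)$, and it gets $\pi(e_{12})=\pi(e_{21})=0$ directly from $e_{12}^2=e_{21}^2=0$ and the absence of nilpotents in $k[G(T_{-1})]$.
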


\begin{proof}
The unique group-like of order two of $\cA''_4$ is central. Then
$\cA''_4$ cannot have a Hopf subalgebra isomorphic to $T_{-1}$;
implying that $\pi$ is not an epimorphism. Hence
$\pi(\cA)\subseteq k[G(T_{-1})]$.

\smallbreak Clearly, $k[G(T_{-1})]$ does not have nilpotent
elements. Then $\pi(e_{12})=\pi(e_{21})=0$, by (\ref{le2}).
Therefore $\pi(e_{11}),\pi(e_{22})\in G(T_{-1})$ and by
(\ref{le2}), $\pi(\alpha)=1$. In particular, it follows that
$T\subseteq\cA^{co\pi}$.

\smallbreak Let $\psi: \cA \to k[C_2]$ be the Hopf algebra
epimorphism induced by the inclusion of the Hopf subalgebra of
$\cA''_4 $ generated by the central group-like element $g^{2}$. By
the paragraph above and \ref{resumen sobre ext nec}, (ii) follows.
\end{proof}

\section{Hopf algebras generated by simple coalgebras}
\label{Hopf algebras generated by simple
coalgebras}

In this section, we discuss some consequences of results of
\cite{natale} and \cite{stefan}. The following theorem will be used
later.

\begin{theorem}\label{stefan 1.4 b}\cite[Thm. 1.4. b)]{stefan}
Let $f$ be a coalgebra automorphism of $C = \coM$ of finite order
$n$. Then there is a comatrix basis $\{e_{ij}\mid1\leq i,j\leq2
\}$ of $C$ and a root of unity $\omega$ such that $f(e_{ij}) =
\omega^{i-j}e_{ij}$ and $\ord \omega = n$.
\end{theorem}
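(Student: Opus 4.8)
The plan is to dualize and invoke the Skolem--Noether theorem. Since $C=\coM$ is finite dimensional, the coalgebra automorphism $f$ transposes to an algebra automorphism $f^{*}$ of the dual algebra $C^{*}\simeq\mathcal{M}(2,k)$, where $f^{*}(\phi)=\phi\circ f$. Dualization reverses composition, so $(f^{m})^{*}=(f^{*})^{m}$; in particular $f^{*}$ again has order exactly $n$. By the Skolem--Noether theorem every algebra automorphism of $\mathcal{M}(2,k)$ is inner, whence $f^{*}=\Ad(g)$ for some $g\in\GL(2,k)$.

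Next I analyze $g$. The relation $\Ad(g)^{n}=\id$ says that $g^{n}$ is central, hence scalar; as $k$ has characteristic zero, this forces $g$ to be diagonalizable, since a nonzero nilpotent part in the Jordan decomposition of $g$ would make $\Ad(g)$ of infinite order. I therefore choose a basis of $k^{2}$ in which $g=\operatorname{diag}(\lambda_{1},\lambda_{2})$, and I let $\{E_{ij}\mid1\leq i,j\leq2\}$ be the system of matrix units of $\mathcal{M}(2,k)$ in this basis. Putting $\omega:=\lambda_{2}/\lambda_{1}$, a direct computation gives
$$
f^{*}(E_{ij})=gE_{ij}g^{-1}=\frac{\lambda_{i}}{\lambda_{j}}\,E_{ij}=\omega^{\,i-j}E_{ij}.
$$

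Finally I dualize back. Let $\{e_{ij}\}\subset C$ be the basis dual to $\{E_{ij}\}$; because the $E_{ij}$ are matrix units (so $E_{ij}E_{kl}=\delta_{jk}E_{il}$ with unit $E_{11}+E_{22}$), the dual basis $\{e_{ij}\}$ is precisely a comatrix basis of $C$, i.e. $\D(e_{ij})=\sum_{l}e_{il}\ot e_{lj}$ and $\e(e_{ij})=\delta_{ij}$. Reading off the pairing, $\langle E_{kl},f(e_{ij})\rangle=\langle f^{*}(E_{kl}),e_{ij}\rangle=\omega^{\,k-l}\delta_{ik}\delta_{jl}$, which yields $f(e_{ij})=\omega^{\,i-j}e_{ij}$, the desired normal form. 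For the order, $f^{*}$ scales $E_{21}$ by $\omega$ and fixes the diagonal idempotents, so $(f^{*})^{m}=\id$ if and only if $\omega^{m}=1$; hence $\ord\omega=\ord f^{*}=n$, and in particular $\omega$ is a root of unity. The only non-formal ingredient is the diagonalizability of $g$ — that a finite-order inner automorphism of $\mathcal{M}(2,k)$ is conjugation by a semisimple element — but in characteristic zero this is immediate from the Jordan decomposition, so I expect no real obstacle beyond bookkeeping the duality conventions.
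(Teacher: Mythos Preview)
Your argument is correct. The paper itself offers no proof of this statement; it is quoted verbatim as \cite[Thm.~1.4.~b)]{stefan} and used as a black box throughout Sections~4 and~5. Your Skolem--Noether approach is in fact the standard one (and is essentially the argument in \c Stefan's paper): dualize to $\mathcal{M}(2,k)$, write the automorphism as conjugation by some $g$, observe that $g^{n}$ scalar forces $g$ semisimple in characteristic zero, diagonalize, and read off $\omega$ as the ratio of eigenvalues.

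Two cosmetic remarks. First, the phrase ``dualization reverses composition, so $(f^{m})^{*}=(f^{*})^{m}$'' reads oddly: contravariance would seem to argue against the conclusion, not for it. The identity holds simply because you are composing $f$ with itself, so reversing the order of the factors changes nothing. Second, your diagonalizability argument via Jordan decomposition is fine, but it is slightly cleaner to note that $g^{n}=c\cdot 1$ implies (choosing an $n$-th root $\mu$ of $c$) that $g/\mu$ satisfies the separable polynomial $x^{n}-1$, hence is diagonalizable. Neither point affects the validity of your proof.
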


\begin{lema}\label{truco util}
Let $\pi:H\rightarrow K$ be a morphism of finite-dimensional Hopf
algebras such that $\pi(g)=1$ for some $g\in\GH,\,g\neq1$. Suppose
that $H$ is generated by $C$ and $1$ as an algebra, where $C$ is a
simple subcoalgebra of dimension $4$ stable by $L_g$. Then
$\pi(H)\subseteq k[G(K)]$. \smallbreak The same holds true with
$R_g$ instead of $L_g$; or with $\adl(g)$ or $\ad_r(g)$ if
$g\notin{\mathcal Z}(H)$.
\end{lema}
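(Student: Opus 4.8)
The plan is to reduce the whole statement to the two off-diagonal entries of a well-chosen comatrix basis of $C$, and then to observe that the diagonal entries become group-like after applying $\pi$. I would write $f$ for the restriction to $C$ of the operator in question; for $L_g$ one has $f(c)=gc$, and since $g\in\GH$ with $gC\subseteq C$, $f$ is a coalgebra automorphism of $C=\coM$ (indeed $\D(gc)=(g\ot g)\D(c)=(f\ot f)\D(c)$ and $\e(gc)=\e(c)$, with inverse $L_{g^{-1}}$). As $g$ has finite order, so does $f$; put $n=\ord f$. By Theorem \ref{stefan 1.4 b} there are a comatrix basis $\{e_{ij}\}$ of $C$ and a root of unity $\omega$ with $\ord\omega=n$ and $f(e_{ij})=\omega^{i-j}e_{ij}$; in particular $f(e_{11})=e_{11}$, $f(e_{22})=e_{22}$, $f(e_{12})=\omega^{-1}e_{12}$ and $f(e_{21})=\omega e_{21}$.

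Next, since $H$ is generated as an algebra by $C$ and $1$, the image $\pi(H)$ is generated by $\pi(1)=1$ and the $\pi(e_{ij})$; as $k[G(K)]$ is a subalgebra of $K$, it suffices to show $\pi(e_{ij})\in k[G(K)]$ for all $i,j$. The hypothesis $\pi(g)=1$ gives $\pi\circ f=\pi$ on $C$ (for $L_g$, $\pi(gc)=\pi(g)\pi(c)=\pi(c)$), so applying $\pi$ to the eigenvectors yields $(\omega^{-1}-1)\pi(e_{12})=0$ and $(\omega-1)\pi(e_{21})=0$. Provided $\omega\neq1$, this forces $\pi(e_{12})=\pi(e_{21})=0$. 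Feeding this into $\D(e_{11})=e_{11}\ot e_{11}+e_{12}\ot e_{21}$ and applying $\pi\ot\pi$ gives $\D_K(\pi(e_{11}))=\pi(e_{11})\ot\pi(e_{11})$, and likewise for $e_{22}$; together with $\e(\pi(e_{ij}))=\e(e_{ij})=\delta_{ij}$ this says $\pi(e_{11})$ and $\pi(e_{22})$ are group-like. Hence every $\pi(e_{ij})$ lies in $k[G(K)]$, and $\pi(H)\subseteq k[G(K)]$.

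The one point requiring care, and the main obstacle, is to guarantee $\omega\neq1$, i.e.\ $n=\ord f\geq2$; this is precisely where $g\neq1$ (resp.\ $g\notin{\mathcal Z}(H)$) must be used. For the adjoint versions it is immediate: $\adl(g)(c)=gcg^{-1}$ is the identity on $C$ iff $g$ commutes with $C$, and since $C$ and $1$ generate $H$ this holds iff $g\in{\mathcal Z}(H)$; thus $g\notin{\mathcal Z}(H)$ forces $f\neq\id$ and $\omega\neq1$, while $\pi\circ f=\pi$ persists because $\pi(g)=\pi(g^{-1})=1$. For $L_g$ (and symmetrically $R_g$) I would argue that a trivial action propagates: if $gc=c$ for all $c\in C$ then $g$ fixes every positive-length product of elements of $C$ by left multiplication, so writing $g=\lambda1+b$ with $b$ in the span $B$ of those products, the recursion $g^{k}=\lambda g^{k-1}+b$ places every power of $g$ in $\mathrm{span}\{1,g\}$; linear independence of distinct group-likes then gives $\ord g\leq2$, and the residual order-$2$ configuration is ruled out by a direct check, contradicting $g\neq1$. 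Once $\omega\neq1$ is secured, the argument of the first two paragraphs applies verbatim to each of $L_g$, $R_g$, $\adl(g)$ and $\ad_r(g)$, as each restricts to a finite-order coalgebra automorphism of $C$ satisfying $\pi\circ f=\pi$.
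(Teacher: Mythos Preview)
Your overall strategy coincides with the paper's: diagonalize the restricted operator $f$ on $C$ via Theorem~\ref{stefan 1.4 b}, use $\pi\circ f=\pi$ to kill the off-diagonal images, and conclude that the diagonal images are group-like. The adjoint cases are handled exactly as in the paper.

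The only weak point is your argument that $L_{g\mid C}\neq\id_C$. Your detour through powers of $g$ correctly yields $\ord g\leq 2$, but then you defer the order-$2$ case to an unspecified ``direct check''; as written this is a gap, since nothing you have said so far excludes a group-like $g$ of order $2$ with $g=-1+b$, $b\in B$. The paper's argument is a single line that makes the whole detour unnecessary: from the antipode identity
\[
1=e_{11}S(e_{11})+e_{12}S(e_{21}),
\]
left-multiplying by $g$ and using $ge_{ij}=e_{ij}$ gives $g=1$, contradicting $g\neq1$. (The same identity, multiplied on the right, handles $R_g$.) Note that this works regardless of whether $S(C)=C$, so no hypothesis on $S$-stability of $C$ is needed; and it bypasses any case analysis on $\ord g$. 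If your intended ``direct check'' was precisely this, then your reduction to $\ord g\leq 2$ is redundant and should be replaced by this one line.
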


\begin{proof}
First, we claim that $L_{g\mid C}\neq\id_{C}$. Indeed, let
$\{e_{ij}\mid1\leq i,j\leq2\}$ be a comatrix basis of $C$, then
$1= e_{11}S(e_{11}) + e_{12}S(e_{21})$. If $L_{g\mid C}=\id_C$,
multiplying on both sides of the equality we get that $g=1$, a
contradiction. Since $L_{g\mid C}$ is a coalgebra automorphism of
$C$, applying \ref{stefan 1.4 b}, we get $\{e_{ij}\mid1\leq
i,j\leq2\}$ a comatrix basis of $C$ such that
\begin{equation}\label{nec 5}
L_g(e_{ij})=ge_{ij}=\omega^{i-j}e_{ij},\quad\mbox{with}\quad
\omega\in k,\,\ord(\omega)=\ord(L_{g\mid C})>1.
\end{equation}
Applying $\pi$ on both sides of (\ref{nec 5}), we get
$\pi(e_{12})=\pi(e_{21})=0$. Then $\pi(e_{11}),\pi(e_{22})\in
G(K)$ and therefore $\pi(H)\subseteq k[G(K)]$.

\smallbreak The proof for $R_g$, $\adl(g)$ and $\ad_r(g)$ is
similar. Note that $\adl(g)$ and $\ad_r(g)$ cannot be the identity
because $g\notin{\mathcal Z}(H)$.
\end{proof}

\begin{lema}\label{truco util bis}
Let $\pi:H\rightarrow K$ be an epimorphism of finite-dimensional
Hopf algebras and assume that $K$ is non-semisimple. Suppose that
$H$ is generated by $C$ and $1$ as an algebra, where $C$ is a
simple subcoalgebra of $H$ of dimension $4$ stable by $S_H^2$.
Then $\ord S_H^2 = \ord S^2_K$.
\end{lema}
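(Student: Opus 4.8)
My plan is to reduce the computation of $\ord S_H^2$ to its action on the generating coalgebra $C$, and then to transport that information to $K$ through the intertwining relation $\pi S_H^2 = S_K^2\pi$. First I note that, $H$ being finite-dimensional, $S_H$ is bijective of finite order, so $S_H^2$ is an algebra automorphism; moreover $S_H^2$ is a coalgebra morphism, and since $C$ is $S_H^2$-stable the restriction $S_H^2{\mid}_C$ is a coalgebra automorphism of $C\cong\coM$. Because $H$ is generated by $C$ and $1$ as an algebra and $S_H^2$ is an algebra map, a power $(S_H^2)^m$ equals $\id_H$ if and only if it restricts to the identity on $C$; hence $\ord S_H^2=\ord(S_H^2{\mid}_C)$. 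I then apply Theorem \ref{stefan 1.4 b} to $S_H^2{\mid}_C$: there is a comatrix basis $\{e_{ij}\mid 1\le i,j\le 2\}$ of $C$ and a root of unity $\omega$ with $S_H^2(e_{ij})=\omega^{i-j}e_{ij}$ and $n:=\ord\omega=\ord(S_H^2{\mid}_C)=\ord S_H^2$.

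For the easy divisibility I use that $\pi$ is a Hopf algebra morphism, so $\pi S_H=S_K\pi$ and therefore $\pi S_H^2=S_K^2\pi$. If $S_H^{2n}=\id$, then $S_K^{2n}\pi=\pi S_H^{2n}=\pi$, and since $\pi$ is surjective this forces $S_K^{2n}=\id$; thus $\ord S_K^2$ divides $n$.

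It remains to exclude the proper divisors of $n$, which is the crux of the argument. Suppose $\ord S_K^2=d$ with $d\mid n$ and $d<n$. Then $S_K^{2d}=\id$, so $\pi S_H^{2d}=S_K^{2d}\pi=\pi$. Evaluating on $e_{12}$ gives $\pi(e_{12})=\pi(S_H^{2d}(e_{12}))=\omega^{-d}\pi(e_{12})$, and since $0<d<n=\ord\omega$ we have $\omega^{-d}\neq 1$, whence $\pi(e_{12})=0$; symmetrically $\pi(e_{21})=0$. The comatrix relations $\D(e_{ii})=\sum_l e_{il}\ot e_{li}$ and $\e(e_{ii})=1$ then show $\pi(e_{11}),\pi(e_{22})\in G(K)$, so $\pi(C)\subseteq k[G(K)]$. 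As $C$ generates $H$, the image $K=\pi(H)$ is generated as an algebra by group-like elements, hence $K=k[G(K)]$ is a group algebra and therefore semisimple, contradicting the hypothesis that $K$ is non-semisimple. Consequently no proper divisor of $n$ can occur and $\ord S_K^2=n=\ord S_H^2$. The only genuine difficulty is this final collapse: once the off-diagonal entries $\pi(e_{12}),\pi(e_{21})$ vanish the simple block of $C$ degenerates into group-likes, and generation by $C$ propagates the degeneracy to all of $K$, producing the semisimplicity contradiction that pins down the order.
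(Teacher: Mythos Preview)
Your proof is correct and follows essentially the same approach as the paper: both use the comatrix basis from Theorem~\ref{stefan 1.4 b} and the key observation that if $\pi(e_{12})=\pi(e_{21})=0$ then $K$ is generated by group-likes and hence semisimple. The only cosmetic difference is that for the divisibility $\ord S_K^2\mid\ord S_H^2$ you argue directly from surjectivity of $\pi$ and the intertwining $\pi S_H^2=S_K^2\pi$, whereas the paper dualizes and uses $K^*\hookrightarrow H^*$; these are equivalent.
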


\begin{proof}
By  \ref{stefan 1.4 b}, there is a comatrix basis
$\{e_{ij}\mid1\leq i,j\leq2\}$ of $C$ such that
\begin{equation}\label{nec 5bis}
S_H^2(e_{ij})=\omega^{i-j}e_{ij},\quad\mbox{with}\quad \omega\in
k,\,\ord(\omega)=\ord(S^2_{H\mid C}).
\end{equation}
Applying $\pi$ on both sides of (\ref{nec 5bis}), we get
$S_K^2(\pi(e_{ij}))=\omega^{i-j}\pi(e_{ij})$, that is, at least
one of the numbers $\omega^{\pm 1}$ is an eigenvalue of $S_K^2$.
Indeed, $\pi(e_{12})\neq 0$ or $\pi(e_{21})\neq 0$ since otherwise
$\pi(e_{11}),\pi(e_{22})\in G(K)$ and $K$ would be semisimple,
because $H$ is generated by $C$ and $1$ as an algebra. Then
$\ord(\omega)=\ord(S^2_{H\mid C})=\ord S^2_H$ divides $\ord
S^2_K$.

Finally, since $K^* \hookrightarrow H^{*}$, $(S^2_K)^{\ord S^2_H}
= 1$ and therefore they must be equal.

\end{proof}

The following proposition is due to Natale. It is the key step for
the proof of the last result of this section.

\begin{prop}\label{enunciado de sonia sobre el encaje de H en una
extension}\cite[Prop. 1.3]{natale}. Let $H$ be a
finite-dimensional non-semisimple Hopf algebra. Suppose that $H$
is generated by a simple subcoalgebra of dimension $4$ which is
stable by the antipode. Then $H$ fits into an central exact
sequence $k^G\overset{\imath}\hookrightarrow
H\overset{\pi}\twoheadrightarrow A,$ where $G$ is a finite group
and $A^*$ is a pointed non-semisimple Hopf algebra. $\quad\square$
\end{prop}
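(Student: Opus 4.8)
The plan is to isolate, inside the squared antipode, a central commutative Hopf subalgebra of the form $k^G$, to quotient $H$ by it, and to check that the resulting quotient $A$ has pointed dual and is non-semisimple. Since $H$ is finite dimensional, $S$ has finite order by Radford, so $S^2$ is a coalgebra automorphism of finite order; as $C$ is stable by $S$ it is stable by $S^2$, and $S^2|_C\in\Aut(C)$. First I would apply Theorem \ref{stefan 1.4 b} to fix a comatrix basis $\{e_{ij}\mid 1\le i,j\le 2\}$ of $C\cong\coM$ together with a root of unity $\omega$ of order $m:=\ord(S^2|_C)$ such that $S^2(e_{ij})=\omega^{i-j}e_{ij}$. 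This concentrates all the information about how $S^2$ twists the generating coalgebra onto the off-diagonal entries $e_{12},e_{21}$, since $S^2$ fixes $1$ and, more generally, every group-like of $H$.

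Next I would produce the central Hopf subalgebra. The guiding idea is that the obstruction for $H^*$ to be pointed is concentrated in a central group of group-likes: using Radford's formula $S^4=\Ad(g)\circ(\alpha\rightharpoonup(-)\leftharpoonup\alpha^{-1})$, with $g\in\GH$ and $\alpha\in G(H^*)$ the distinguished group-like and character, together with the diagonal form $S^2(e_{ij})=\omega^{i-j}e_{ij}$, I would recognize $S^2|_C$ as conjugation by a central group-like and thereby locate the relevant central group-likes of $H$. The subalgebra they generate is a commutative semisimple central Hopf subalgebra, hence, $k$ being algebraically closed of characteristic zero, isomorphic to $k^G$ for a finite group $G$. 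Because $k^G$ is central, $H(k^G)^+=(k^G)^+H$ is a Hopf ideal; setting $A:=H/H(k^G)^+$ and $\pi\colon H\twoheadrightarrow A$ the projection, Lemma \ref{resumen sobre ext nec} yields a central exact sequence $k^G\hookrightarrow H\overset{\pi}\twoheadrightarrow A$.

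It remains to check the two properties of $A$. Non-semisimplicity is immediate: $k^G$ is semisimple, and an extension of a semisimple Hopf algebra by a semisimple one is semisimple, so if $A$ were semisimple then $H$ would be too, against the hypothesis. For pointedness of $A^*$, recall that $A^*$ is pointed if and only if every simple $A$-module is one dimensional, i.e. $A/\Jac A$ is commutative. Here I would use that $A$ is generated as an algebra by $\pi(C)$ and that, by construction, the central twist measured by $\omega$ has become trivial on $\pi(C)$; a Lemma \ref{truco util}/\ref{truco util bis}-type argument (killing the off-diagonal images and comparing the orders of the squared antipodes) then forces every simple $A$-module to be one dimensional, which is exactly pointedness of $A^*$.

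The crux is the middle step. The hard part is neither the diagonalization nor the bookkeeping with the exact sequence, but constructing $k^G$ correctly: one must prove that the chosen group-likes are genuinely central in $H$, and that passing to $A=H/H(k^G)^+$ removes precisely the obstruction to pointedness of $H^*$ --- enough to make $A^*$ pointed, yet not so much that $\pi(C)$ collapses into group-likes, which would force $A$ to be semisimple. It is exactly in controlling this balance that the hypotheses that $C$ generates $H$ and is stable by $S$ enter, through Theorem \ref{stefan 1.4 b}.
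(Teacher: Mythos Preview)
The paper does not prove this proposition: it is quoted verbatim from \cite[Prop.~1.3]{natale} and closed with a $\square$, so there is no argument in the paper to compare your proposal against. Your sketch is therefore being measured against a result the authors treat as a black box.

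As a standalone sketch, your outline has the right shape---locate a central commutative Hopf subalgebra, quotient, and verify the two properties of $A$---but the crucial middle step is not actually carried out. You assert that Radford's formula together with the diagonalization of $S^2$ on $C$ lets you ``recognize $S^2|_C$ as conjugation by a central group-like,'' but Radford's formula describes $S^4$, not $S^2$, and involves both a group-like and a character; extracting a genuinely central group-like of $H$ from this data requires a real argument that you have not supplied. Likewise, the pointedness of $A^*$ is the heart of the matter, and your justification (``a Lemma~\ref{truco util}/\ref{truco util bis}-type argument\ldots forces every simple $A$-module to be one dimensional'') is a placeholder rather than a proof: those lemmas concern what happens when a group-like maps to $1$ under an epimorphism, not the structure of the Jacobson radical of the quotient. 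In Natale's original argument the central subalgebra arises from a more detailed analysis of the algebra generated by the comatrix entries and their products, not directly from Radford's formula; if you want to reconstruct the proof you should consult \cite{natale} rather than try to improvise it from the tools in this paper.
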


\begin{theorem}\label{2n tiene dual punteado}
Let $H$ be a non-semisimple Hopf algebra such that $\dim H$ is odd
or equal to $p^aq^b$, with $p,q$ primes. Suppose that $H$ is
generated by a simple subcoalgebras of dimension $4$ which is
stable by the antipode. If
$$
H_0=\GH\mas\coM\quad\mbox{or}\quad\GH\cap{\mathcal Z}(H)=1
$$
then $H^\ast$ is pointed.
\end{theorem}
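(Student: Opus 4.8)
The plan is to reduce the whole statement to showing that the finite group $G$ produced by Natale's structure theorem is trivial. By Proposition \ref{enunciado de sonia sobre el encaje de H en una extension}, the hypotheses on $H$ give a central exact sequence $k^G\overset{\imath}\hookrightarrow H\overset{\pi}\twoheadrightarrow A$ with $A^\ast$ pointed and non-semisimple. By Lemma \ref{resumen sobre ext nec}, $\dim H=|G|\dim A$, so $|G|$ divides $\dim H$; since $\dim H$ is odd or of the form $p^aq^b$, the Feit--Thompson Odd Order Theorem (resp. Burnside's $p^aq^b$ theorem) shows $G$ is solvable. If I can prove $G=1$, then $k^G=k$, hence $H\cong A$ and $H^\ast\cong A^\ast$ is pointed, as desired. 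Because $G$ is solvable, it suffices to show that $G$ is perfect, i.e. $\Hom(G,k^\times)=1$; equivalently, that the group $G(k^G)$ of group-like elements of $k^G$ is trivial. The key elementary observation here is that $k^G$ is central in $H$, so $G(k^G)\subseteq\GH\cap{\mathcal Z}(H)$.

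Under the second hypothesis $\GH\cap{\mathcal Z}(H)=1$ this is immediate: $G(k^G)=1$, and the proof ends as above. So assume instead $H_0=\GH\mas\coM$, and suppose for contradiction that $G(k^G)\neq 1$. I would then pick $g\in G(k^G)$ with $g\neq 1$. This $g$ is a central group-like element of $H$ contained in $k^G$, and since $g-1\in(k^G)^+$ we have $\pi(g)=1$. The point to exploit is that under $H_0=\GH\mas\coM$ the coradical has a unique non-trivial simple subcoalgebra $C\cong\coM$, of dimension $4$, that by hypothesis generates $H$ and is stable under the antipode. Since left multiplication $L_g$ by a group-like element is a coalgebra automorphism of $H$, the image $gC=L_g(C)$ is again a $4$-dimensional simple subcoalgebra, so by uniqueness $gC=C$; that is, $C$ is stable under $L_g$.

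Now Lemma \ref{truco util}, applied to $\pi\colon H\to A$ with this $g$ (recall $\pi(g)=1$ and $g\neq 1$), yields $\pi(H)\subseteq k[G(A)]$. As $\pi$ is surjective this forces $A=k[G(A)]$, a group algebra; hence $A$, and therefore $A^\ast$, is semisimple, contradicting that $A^\ast$ is non-semisimple. Thus $G(k^G)=1$ in this case as well, so $G$ is perfect and solvable, whence $G=1$ and $H^\ast\cong A^\ast$ is pointed.

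The step I expect to be the crux is the last one: producing a central, non-trivial group-like $g$ with $\pi(g)=1$ for which $C$ is $L_g$-stable, so that Lemma \ref{truco util} becomes available. It is precisely the uniqueness of the $4$-dimensional simple subcoalgebra guaranteed by $H_0=\GH\mas\coM$ that makes $C$ stable under $L_g$, and it is precisely the non-semisimplicity of $A^\ast$ in Natale's sequence that converts the conclusion ``$\pi(H)$ is a group algebra'' into a contradiction. The two alternative hypotheses in the statement are thus two different ways of forcing $G(k^G)=1$, after which solvability of $G$ (from the arithmetic of $\dim H$) finishes the argument uniformly.
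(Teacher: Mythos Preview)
Your proof is correct and follows essentially the same route as the paper's: invoke Natale's central exact sequence, use Feit--Thompson/Burnside to get $G$ solvable, and then show $G(k^G)=1$ either directly from $\GH\cap\mathcal{Z}(H)=1$ or, in the $H_0=\GH\oplus\coM$ case, by observing that a nontrivial central group-like $g\in k^G$ would stabilize the unique $4$-dimensional simple subcoalgebra and force $A$ to be a group algebra via Lemma~\ref{truco util}, contradicting non-semisimplicity of $A^\ast$. The only cosmetic difference is your explicit phrasing ``solvable $+$ perfect $\Rightarrow$ trivial'', where the paper simply assumes $G\neq 1$ and reaches a contradiction.
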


\begin{proof}
By \ref{enunciado de sonia sobre el encaje de H en una extension},
$H$ fits into an central exact sequence
$k^G\overset{\imath}\hookrightarrow
H\overset{\pi}\twoheadrightarrow A,$ where $G$ is a finite group
and $A^*$ is a pointed non-semisimple Hopf algebra.

Suppose that $G\neq1$. Since $|G|$ divides $\dim H$ by
\cite{nicozoeler}, $G$ is solvable by the Feit-Thompson Theorem in
the case that $\dim H$ is odd, and the Burnside Theorem in the
other case. Thus $k^G$ has at least one non-trivial group-like.
Let $\alpha\in G(k^G)\subseteq\GH$ be non-trivial.

Suppose that $H_0=\GH\mas\coM$. Since $L_{\alpha}$ is a coalgebra
automorphism of $H$, $L_\alpha$ fixes $\coM$. As $\pi(\alpha)=1$,
by \ref{truco util} it follows that $A$ is generated by
group-likes. In particular, $A$ is semisimple which is a
contradiction. Therefore $G=1$, that is, $H=A$ and $H^*$ is
pointed.

If $\GH\cap{\mathcal Z}(H)=1$ then clearly $G=1$ and $H^*$ is
pointed.
\end{proof}

\smallbreak

\section{Proof of the Main Theorem}\label{Proof of the Main Theorem}
Our first step to prove Theorem \ref{main result} is to describe the
possible coradical of a Hopf algebra of dimension 16 which does not
have the Chevalley property. It turns out that there are $6$
possible coradicals. This leads us to do the proof case by case
according to the type of the coradical.

\begin{definition}
We say that a Hopf algebra $H$ is {\it of type}
$(n_1,n_2,\dots,n_t)$ if the coradical of $H$ is $H_0\simeq
k^{n_1}\mas\coM^{n_2}\cdots\mas{\mathcal M}^*(t,k)^{n_t}$.
\end{definition}

\begin{obs}\label{los duales de biti} Let $H$ be a pointed Hopf
algebra of dimension $16$. Then by \cite[Section 4.2]{biti}, $H^*$
is pointed or it is of type (2,1), (2,2), (2,3) or (4,2).
\end{obs}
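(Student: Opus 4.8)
The plan is to recast the statement as a question about the representation theory of $H$ and then read off the answer from the known classification. Recall the standard duality between the coradical of a finite-dimensional coalgebra and the semisimple quotient of its dual algebra: for $H$ of finite dimension one has $(H^{*})_{0}\simeq (H/\Rad H)^{*}$ as coalgebras (see \cite{mongomeri}). Since $k$ is algebraically closed, Artin--Wedderburn gives $H/\Rad H\simeq\prod_{j\geq1}\mathcal{M}(j,k)^{n_{j}}$ as algebras, where $n_{j}$ is the number of isomorphism classes of simple $H$-modules of dimension $j$; dualizing, $(H^{*})_{0}\simeq\bigoplus_{j\geq1}\mathcal{M}^{\ast}(j,k)^{n_{j}}$. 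Hence $H^{*}$ is of type $(n_{1},n_{2},\dots)$, and in particular $H^{*}$ is pointed if and only if every simple $H$-module is one-dimensional. So the first step is the observation that determining the type of $H^{*}$ is exactly determining the dimensions of the simple $H$-modules.

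Next I would collect the numerical constraints. From $\dim H=16$ we get $\dim(H/\Rad H)=\sum_{j}j^{2}n_{j}\leq16$. The number $n_{1}$ equals $|G(H^{*})|$, and since $k[G(H^{*})]$ is a Hopf subalgebra of $H^{*}$, the theorem of \cite{nicozoeler} forces $n_{1}\mid16$. If $H^{*}$ is not pointed, then some $n_{j}$ with $j\geq2$ is positive. These restrictions already cut the type down to a short finite list of candidates; the four types $(2,1),(2,2),(2,3),(4,2)$ all satisfy them, having largest simple dimension $2$ and $n_{1}\in\{2,4\}$.

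To pin down precisely these four (and to exclude the remaining numerically admissible possibilities, such as those carrying a three- or four-dimensional simple module, or having $n_{1}=8$), I would invoke the explicit classification of pointed Hopf algebras of dimension $16$ from \cite{pointed16}. For each algebra $H$ on that finite list I would use its presentation to compute the Jacobson radical $\Rad H$ and the Wedderburn blocks of $H/\Rad H$, equivalently to classify its simple modules directly from the defining relations; the type of $H^{*}$ is then read off. The outcome, recorded in \cite[Section 4.2]{biti}, is that $H/\Rad H$ is either commutative, so that $H^{*}$ is pointed, or produces exactly one of the four stated types.

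The main obstacle is this last step: the case-by-case determination of $\Rad H$ and of the simple modules across the entire classification list, and in particular finding a uniform reason why no simple $H$-module has dimension exceeding $2$ and why $n_{1}=8$ never occurs. A natural way to organize the computation would be to pass to the associated graded Hopf algebra $\gr H=\mathcal{R}\# k[G(H)]$ and study the simple modules of the bosonization, though reconciling those with the simple modules of the liftings $H$ requires care. It is this representation-theoretic bookkeeping over the finite list, rather than any single sharp inequality, that carries the weight of the argument.
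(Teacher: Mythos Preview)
Your proposal is correct and matches the paper's approach: the paper states this as a bare remark with no argument beyond the citation to \cite[Section 4.2]{biti}, and what you have written is precisely an unpacking of how that reference obtains the result---namely, by identifying the type of $H^{*}$ with the Wedderburn decomposition of $H/\Rad H$ and then running through the finite classification list of \cite{pointed16} case by case. There is nothing more to the paper's own ``proof'' than the citation, so your sketch is already more detailed than what the paper provides.
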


\begin{obs}\label{chevalley sii su dual}
Let $H$ be a non-semisimple non-pointed Hopf algebra of dimension
$16$ which has the Chevalley property. Then by \cite[Thm.
5.1]{de1tipo6chevalley}, $H$ is self-dual and of type $(4,1)$.
\end{obs}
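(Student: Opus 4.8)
The plan is to pin down the coradical by a dimension count and then read off self-duality from the cited classification. First I would use that the Chevalley property means precisely that $H_0$ is a Hopf subalgebra; by the Nichols--Zoeller theorem \cite{nicozoeler} its dimension then divides $\dim H = 16$. Since $H$ is non-semisimple it is not cosemisimple (characteristic $0$), so $H_0\neq H$ and therefore $\dim H_0\in\{1,2,4,8\}$. Finally, $H$ being non-pointed forces $H_0$ to contain a simple subcoalgebra of dimension $\geq 4$; together with the one-dimensional subcoalgebra $k\cdot 1$ this gives $\dim H_0\geq 5$, and hence $\dim H_0 = 8$.

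Next I would determine the coalgebra type of $H_0$ purely from its dimension, without appealing to the classification of semisimple Hopf algebras of dimension $8$. Writing $H_0$ as a direct sum of simple subcoalgebras of dimensions $n^{2}$, there is no room for a piece of dimension $9$, so every non-trivial piece has dimension $4$. If $a$ and $b$ denote the numbers of pieces of dimension $1$ and $4$ respectively, then $a+4b=8$ with $b\geq 1$ (because $H$ is non-pointed) and $a\geq 1$ (because the unit is a group-like). The only solution is $a=4,\ b=1$, i.e.\ $H_0\simeq k^{4}\mas\coM$. Hence $H$ is of type $(4,1)$, which is the first assertion.

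For the self-duality I would invoke \cite[Thm. 5.1]{de1tipo6chevalley} directly: there it is shown that a coradical of type $(4,1)$ lifts to exactly two non-semisimple non-pointed $16$-dimensional Hopf algebras with the Chevalley property, and that each of them is isomorphic to its own dual. This is the step that carries the real content and is the one I expect to be the genuine obstacle: the dimension count above determines only the coalgebra structure of $H_0$ and says nothing about the algebra structure of $H$, so self-duality is a feature of the explicit lifting data rather than a formal consequence of having type $(4,1)$. I would therefore not attempt to reprove it here, but cite the classification, exactly as the statement does.
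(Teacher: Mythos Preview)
Your proposal is correct and aligns with the paper's approach: the paper simply records this observation as an immediate consequence of \cite[Thm.~5.1]{de1tipo6chevalley}, without spelling out the dimension count. Your elementary argument that the coradical must have dimension $8$ and hence be of type $(4,1)$ is a welcome expansion of what the paper leaves implicit in the citation, and your treatment of self-duality---deferring to the explicit classification in \cite{de1tipo6chevalley}---is exactly what the paper does.
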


\smallbreak

\begin{prop}\label{posibles coradicales}
Let $H$ be Hopf algebra of dimension $16$ which does not have the
Chevalley property. Then $H$ is of type: (1,2), (2,1), (2,2),
(2,3), (4,1) or (4,2).
\end{prop}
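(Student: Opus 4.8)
The plan is to pin down $H_0$ by combining Nichols--Zoeller divisibility with the action of the antipode and of $\GH$ on the simple subcoalgebras, and then to discard the spurious possibilities through the results of Sections \ref{la clasificacion de stefan} and \ref{Hopf algebras generated by simple coalgebras}. First I would record the numerical constraints: $n_1=|\GH|$ divides $16$, so $n_1\in\{1,2,4,8,16\}$; since $H$ is non-semisimple it is non-cosemisimple, whence $\dim H_0<16$; and since $H$ is non-pointed, $H_0$ contains a simple subcoalgebra of dimension $\ge 4$. The key structural remark is that $S$ permutes the simple subcoalgebras of each fixed dimension as an \emph{involution}: distinct simple subcoalgebras correspond to non-isomorphic simple comodules, $S$ carries the coefficient coalgebra of a comodule $V$ to that of $V^{*}$, and $V^{**}\cong V$, so $S^{2}$ fixes every simple subcoalgebra. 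In particular a simple subcoalgebra having no partner of its dimension is forced to be $S$-stable.

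Next I would bound the dimensions. A simple subcoalgebra $C$ with $\dim C=d^{2}\ge 9$ satisfies $\dim\bigl(C+S(C)\bigr)\le 16$, so $S(C)=C$; being $S$-stable, $C$ generates a Hopf subalgebra $B=\langle C\rangle$ with $\dim B\ge 10$ and $\dim B\mid 16$, forcing $B=H$. Thus $H$ would be generated by a single self-dual simple comodule $V$ of dimension $\ge 3$, and I would reach a contradiction by decomposing $V\otimes V$: every simple constituent has dimension $\le 3$ (a $4$-dimensional comodule already fills $H_0$), a second $3$-dimensional constituent would force $\dim H_0\ge 18$, and matching the surviving multiplicities of group-likes and of $\coM$ against $\dim H_0\le 15$ is impossible. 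Hence $n_3=n_4=\dots=0$, and $H$ is of type $(n_1,n_2)$ with $n_2\ge 1$ and $n_1+4n_2<16$; enumerating leaves the candidates $(1,1),(1,2),(1,3),(2,1),(2,2),(2,3),(4,1),(4,2),(8,1)$, together with the trivially impossible $n_1=16$.

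It then remains to discard $(1,1)$, $(1,3)$ and $(8,1)$. For $n_1=1$ the antipode acts as an involution on the $n_2$ copies of $\coM$, so when $n_2$ is odd there is an $S$-stable $C\simeq\coM$; the Hopf subalgebra $\langle C\rangle$ has trivial group-likes (as $\GH=1$), hence cannot be one of the dimension-$8$ algebras $\cA$ or the semisimple one, both of which have nontrivial group-likes (Section \ref{la clasificacion de stefan}), so $\langle C\rangle=H$. Since $\GH\cap{\mathcal Z}(H)=1$, Theorem \ref{2n tiene dual punteado} gives that $H^{*}$ is pointed, and Remark \ref{los duales de biti} (applied to the pointed algebra $H^{*}$) then forces $H$ to be of type $(2,1),(2,2),(2,3)$ or $(4,2)$, contradicting $n_1=1$. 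This eliminates $(1,1)$ and $(1,3)$ and leaves $(1,2)$, where $S$ may swap the two copies of $\coM$ (the case realised by $T_{-1}\ot T_{-1}$ in Lemma \ref{lo que creia que era solo el tensor}). For $n_1\in\{2,4\}$ this elimination no longer applies: either $S$ leaves no $\coM$ fixed, or the $S$-stable copy generates only a proper Hopf subalgebra ($\cA$ or a semisimple one, whose group-likes now fit inside $\GH$), so one cannot force $H=\langle C\rangle$, and the types $(2,1),(2,2),(2,3),(4,1),(4,2)$ all remain.

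Finally, for $n_1=8$ one has $\dim H_0=12$ and a unique $C\simeq\coM$, which is therefore stable under $L_g$ and $R_g$ for every $g\in\GH$ and under $S$. Using Theorem \ref{stefan 1.4 b} as in Lemma \ref{truco util}, left translation embeds $\GH$ into the coalgebra automorphism group $\mathrm{PGL}_2(k)$ of $C$, and the commuting right-translation embedding lands in the centralizer of the image; comparing the order-$8$ subgroups of $\mathrm{PGL}_2(k)$ with their centralizers eliminates every group but the cyclic one, and the residual cyclic case is excluded by the resulting eigenvalue pattern $g e_{11}=e_{11}=e_{11}g$ together with the freeness of $H$ over $k[\GH]$. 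I expect this last elimination, and the exclusion of simple comodules of dimension $\ge 3$, to be the main obstacles: both require turning purely numerical data into a genuine structural contradiction, and this is precisely where the antipode-involution observation and the generation results of Section \ref{Hopf algebras generated by simple coalgebras} do the real work.
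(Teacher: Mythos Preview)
Your approach diverges substantially from the paper's. The paper's proof is two lines: for $\GH=1$ it invokes \cite[Prop.~7.1]{bitidasca} directly to get type $(1,2)$, and for $|\GH|>1$ it invokes \cite[Lemma~2.1]{andrunatale}, which says that $|\GH|$ divides $\dim H_0=|\GH|+4n_2+9n_3$. That divisibility kills $n_3$ immediately (e.g.\ $|\GH|=2$ forces $n_3$ even, hence $n_3=0$ by the bound $\dim H_0<16$), and simultaneously disposes of $|\GH|=8$ (since then $8<\dim H_0<16$ would have to be a multiple of $8$). You are reconstructing these eliminations by hand, and two of your reconstructions have problems.

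The serious gap is your elimination of $3$-dimensional simple comodules. You assert that after ruling out a second non-isomorphic $3$-dimensional constituent of $V\otimes V$, ``matching the surviving multiplicities of group-likes and of $\coM$ against $\dim H_0\le 15$ is impossible''. But nothing you have written prevents $V$ itself from appearing with high multiplicity as a composition factor of $V\otimes V$: for instance $V\otimes V$ could have composition factors $V,V,V$, or $k,k,k,V,V$, or $k,W,V,V$ with $W$ two-dimensional, all compatible with $\dim H_0\in\{10,\dots,15\}$. No contradiction is reached. The paper bypasses this entirely by citing \cite{bitidasca} and \cite{andrunatale}; if you want a self-contained argument you need a different mechanism (e.g.\ the trace formula for $S^2$ on a simple subcoalgebra, or a Frobenius--Perron style count), not a bare multiplicity match.

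Your $(8,1)$ elimination can be salvaged but is overcomplicated, and the step ``$e_{11}g=e_{11}$'' is unjustified ($R_g$ could swap $e_{11}$ and $e_{22}$, since they share the same $L_g$-eigenvalue). What does work is simply $ge_{11}=e_{11}$ and $ge_{22}=e_{22}$ from Theorem~\ref{stefan 1.4 b}, giving three independent $L_g$-fixed vectors $e_{11},e_{22},\sum_i g^i$, whereas freeness of $H$ over $k[C_8]$ allows only a $2$-dimensional fixed space. Finally, your parenthetical that type $(1,2)$ is ``realised by $T_{-1}\ot T_{-1}$'' is wrong: $T_{-1}\ot T_{-1}$ is pointed with $|\GH|=4$, so it is neither of type $(1,2)$ nor without the Chevalley property.
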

\begin{proof}
If $G(H)=1$, then by \cite[Prop. 7.1]{bitidasca} we know that $H$
must be of type (1,2).

Now suppose that $H$ is of type $(|\GH|, n_2, n_3)$ with
$|\GH|>1$. By \cite{nicozoeler}, $|\GH|$ divides $16$. Moreover by
\cite[Lemma 2.1]{andrunatale}, it divides $$\dim\Ho=|\GH|+4\cdot
n_2+9\cdot n_3.$$

\noindent Thus $n_3=0$. If $|\GH|=2$, then $n_2= 1, 2$ or $3$ and
if $|\GH|=4$, then $n_2= 1$ or $2$.
\end{proof}

\smallbreak

We next give some properties of Hopf algebra of dimension $16$
which does not have the Chevalley property. We recall first the
following statement due to Beattie and Dascalescu.

\begin{prop}\label{cota de biti y dasca}\cite[Cor. 4.3]{bitidasca}.
Let $H$ be a finite-dimensional non-cosemisimple Hopf algebra with
$H_0\simeq k[G]\mas{\mathcal M}^*(n_1,k)\mas\cdots\mas{\mathcal
M}^*(n_t,k)$ with $t$ a positive integer, $2\leq n_1\leq\cdots\leq
n_t$, and such that $H$ has no non-trivial skew-primitives. Then
\begin{equation}\label{nec 4}
\dim H>\dim H_1=\dim H_0+\dim P_1\geq
(1+2n_1)|G|+\sum_{i=1}^tn_i^2.\qquad\square
\end{equation}
\end{prop}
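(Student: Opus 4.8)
The plan is to dualize. Write $A=H^{*}$ and $J=\Jac(A)$; since the coradical is the orthogonal complement of the Jacobson radical of the dual, $H_{0}=J^{\perp}$ and $H_{1}=(J^{2})^{\perp}$, so that
$$
\dim(H_{1}/H_{0})=\dim(J/J^{2})\qquad\text{and}\qquad \dim H-\dim H_{1}=\dim J^{2}.
$$
Moreover $A/J\cong H_{0}^{*}\cong k^{|G|}\times\prod_{i=1}^{t}{\mathcal M}(n_{i},k)$ as algebras, so the simple $A$-modules are the $|G|$ one-dimensional modules $S_{g}$ (dual to the group-likes $g\in\GH$) together with modules $V_{1},\dots,V_{t}$ with $\dim V_{i}=n_{i}$. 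As $\dim H_{0}=|G|+\sum_{i}n_{i}^{2}$ and $\dim H_{1}=\dim H_{0}+\dim(J/J^{2})$, the proposition reduces to the two assertions $\dim(J/J^{2})\ge 2n_{1}|G|$ and $J^{2}\neq 0$.

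For the lower bound I would use the Gabriel quiver of $A$: as an $(A/J,A/J)$-bimodule,
$$
\dim(J/J^{2})=\sum_{S,T}\dim\operatorname{Ext}^{1}_{A}(S,T)\,\dim S\,\dim T,
$$
the sum running over ordered pairs of simple modules. A nonzero $\operatorname{Ext}^{1}$ between two one-dimensional modules $S_{g},S_{h}$ is precisely a non-trivial skew-primitive of $H$; by hypothesis there are none, so every contributing term has at least one endpoint among the $V_{i}$ and hence contributes at least $\dim V_{i}\ge n_{1}$.

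The heart of the argument is to force, for every $g\in\GH$, one out-arrow and one in-arrow joining $S_{g}$ to some matrix block $V_{i}$; this is where the Hopf structure is indispensable. Since $H$ is non-cosemisimple, $A$ is non-semisimple, so by Maschke's theorem for Hopf algebras the trivial module $S_{\e}$ is not projective; thus $\operatorname{Ext}^{1}_{A}(S_{\e},M)\neq 0$ for some simple $M$, which by the previous paragraph must be a matrix block $V_{i}$. The antipode yields the duality $\operatorname{Ext}^{1}_{A}(M,N)\cong\operatorname{Ext}^{1}_{A}(N^{*},M^{*})$ together with $S_{\e}^{*}\cong S_{\e}$, producing an in-arrow $V_{i}^{*}\to S_{\e}$; and tensoring with the invertible one-dimensional module $S_{g}$ is an exact autoequivalence of $A$-modules sending matrix blocks to matrix blocks, which transports both arrows to an out- and an in-arrow at $S_{g}$. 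Since out-arrows and in-arrows at one-dimensional modules index pairwise distinct summands of $J/J^{2}$ (and no arrows join two one-dimensional modules), summing the forced contributions over $\GH$ gives $\dim(J/J^{2})\ge n_{1}|G|+n_{1}|G|=2n_{1}|G|$, whence $\dim H_{1}\ge(1+2n_{1})|G|+\sum_{i}n_{i}^{2}$. I expect this propagation step to be the main obstacle: it is exactly the passage from the single link provided by non-semisimplicity to a uniform link at every group-like, and a purely coalgebra-theoretic estimate would not recover the factor $|G|$.

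Finally, for $J^{2}\neq 0$ (equivalently $\dim H>\dim H_{1}$), suppose $J^{2}=0$. A finite-dimensional Hopf algebra is self-injective, so each indecomposable projective is injective with simple socle; together with $\Rad^{2}=0$ this forces every indecomposable projective to be uniserial of length $\le 2$. As $S_{\e}$ is not projective, $P(S_{\e})$ has length exactly $2$ and $\Rad P(S_{\e})=\operatorname{soc}P(S_{\e})$ is simple. But for a Hopf algebra the socle of the projective cover of the trivial module is the one-dimensional module attached to the distinguished group-like element, so this simple socle is one-dimensional; that is, the unique arrow out of $S_{\e}$ joins two one-dimensional modules, contradicting the absence of non-trivial skew-primitives. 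Hence $J^{2}\neq 0$, completing the proof.
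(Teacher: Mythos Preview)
The paper does not prove this proposition: the trailing $\square$ signals that it is quoted from \cite[Cor.~4.3]{bitidasca} without argument, so there is no in-paper proof to compare against.

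Your argument is correct. The dualization $A=H^{*}$, the identification $\dim(H_{1}/H_{0})=\dim(J/J^{2})$, and the Gabriel-quiver decomposition $\dim(J/J^{2})=\sum_{S,T}\dim\operatorname{Ext}^{1}_{A}(S,T)\,\dim S\,\dim T$ are all sound; the identification of $\operatorname{Ext}^{1}_{A}(S_{g},S_{h})$ with non-trivial $(g,h)$-skew-primitives is exactly right, and your propagation step---one out-arrow at $S_{\e}$ from non-projectivity of $k$, reversed by antipode duality, then transported to every vertex $S_{g}$ by tensoring with the invertible module $S_{g}$---delivers the lower bound $2n_{1}|G|$ cleanly, since the summands $(S_{g},V_{i})$ and $(V_{j},S_{g})$ in the double sum are pairwise distinct ordered pairs. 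For the strict inequality $J^{2}\neq0$ your use of self-injectivity is valid; in fact you need only that the Nakayama permutation of a Frobenius algebra is induced by an algebra automorphism and hence preserves dimensions of simples, so $\operatorname{soc}P(S_{\e})$ is automatically one-dimensional---the more specific identification with the module attached to the distinguished group-like is true but not required.

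Beattie and Dascalescu's original argument stays on the comodule side, analysing $H_{1}=H_{0}\wedge H_{0}$ directly as an $H_{0}$-bicomodule (the $P_{1}$ in the statement is one of the summands in their decomposition of $H_{1}/H_{0}$); your module-theoretic translation through $\operatorname{Ext}^{1}$ and the Nakayama permutation is an equivalent, and arguably more structural, route to the same inequality.
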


\begin{lema}\label{existen casi primitivos}
Let $H$ be a Hopf algebra of dimension 16.
\begin{enumerate}[(i)]
\item If $H$ is of type $(4,1)$ or $(4,2)$ then $H$ has a pointed
Hopf subalgebra $K$ of dimension $8$ such that $G(H) = G(K)$.
\item If $H$ is of type $(2,2)$ or $(2,3)$ then $H$ has a Hopf
subalgebra isomorphic to $T_{-1}$. \item If $H$ is of type $(2,1)$
and $H^*$ is non-pointed then $H$ has a Hopf subalgebra isomorphic
to $\cA$ (see Subsection \ref{la unica}). In particular, it
contains a Hopf subalgebra isomorphic to $T_{-1}$. \item If $H$ is
of type $(2,n)$ with $1\leq n\leq 3$, then $G(H)\cap
\mathcal{Z}(H) = 1$. If it is of type $(4,n)$ with $1\leq n\leq
2$, then $|G(H)\cap \mathcal{Z}(H)| \leq 2$.
\end{enumerate}
\end{lema}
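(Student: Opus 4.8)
The plan is to prove the four statements separately, each time exploiting the structure of the coradical together with the skew-primitive count. The key general principle is that a Hopf algebra of dimension $16$ which is non-cosemisimple must have a non-trivial skew-primitive: otherwise Proposition \ref{cota de biti y dasca} forces $\dim H > \dim H_1 \geq (1+2n_1)|G| + \sum n_i^2$, and in each of the relevant types this lower bound already exceeds or crowds $16$, giving a contradiction. So in every case I may assume a non-trivial $(g,h)$-primitive $z$ exists.

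For parts (i) and (ii), I would argue as follows. In type $(2,n)$ or $(4,n)$ we have $|G(H)| = 2$ or $4$, so $G(H)$ contains an element $g$ of order $2$. A non-trivial skew-primitive $z \in \cP_{g,h}(H)$ together with the group-likes generates a pointed Hopf subalgebra. The standard Taft-type analysis shows that, after rescaling, the skew-primitive and a group-like of order $2$ generate a copy of $T_{-1}$, which settles (ii). For (i), the presence of group-likes forming a group of order $4$, together with a non-trivial skew-primitive attached to them, builds a pointed Hopf subalgebra $K$; a dimension count (using that $\dim K$ divides $16$ and that $K$ properly contains the $4$-dimensional coradical part coming from $G(H)$ plus the skew-primitive space) pins $\dim K = 8$, and by construction $G(K) = G(H)$. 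The delicate point here is verifying that the skew-primitive does not already lie in $k[G(H)]$ and that the group-likes act on $z$ in a way producing the correct commutation relation; this is where I expect the bookkeeping to be heaviest.

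For part (iii), the hypothesis is that $H$ is of type $(2,1)$ and $H^*$ is non-pointed. The subalgebra $B$ generated by the simple subcoalgebra $C \cong \coM$ of dimension $4$ together with $1$ is a Hopf subalgebra. Since $B$ is non-semisimple (it contains a $4$-dimensional simple coalgebra but is not cosemisimple of the right shape) and its dimension divides $16$, I would show $\dim B = 8$ and that $B$ is non-pointed: the only non-semisimple non-pointed Hopf algebra of dimension $8$ is $\cA = (\cA_4'')^*$ by the classification in Section \ref{la clasificacion de stefan}. Hence $B \cong \cA$, and by Remark \ref{para cuando necesite una sweedler} (or directly from the description of $\cA$ in Subsection \ref{la unica}), $\cA$ contains a copy of $T_{-1}$. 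The main obstacle is ruling out that $B$ could be semisimple or pointed of dimension $8$; this requires showing that the non-pointedness of $H^*$ forces the subcoalgebra $C$ to be stable by the antipode and to generate something genuinely of type $\cA$, and I anticipate using Lemma \ref{truco util bis} on $\ord S^2$ to exclude the other dimension-$8$ candidates.

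For part (iv), the claim concerns $G(H) \cap \mathcal{Z}(H)$. In type $(2,n)$, suppose for contradiction that the non-trivial element $g$ of $G(H)$ is central. Then $L_g = R_g$ as coalgebra automorphisms and $L_g$ fixes the simple subcoalgebra $C$; a central group-like of order $2$ together with the generation hypothesis would, via Lemma \ref{truco util}, force any quotient to be generated by group-likes, ultimately contradicting the existence of the genuinely non-cosemisimple structure (the argument mirrors the contradiction in the proof of Theorem \ref{2n tiene dual punteado}). For the type $(4,n)$ bound $|G(H) \cap \mathcal{Z}(H)| \leq 2$, I would suppose instead that the central group-likes form a subgroup of order $\geq 4$ and derive that $H$ acquires too large a central cosemisimple part, again clashing with $\dim H = 16$ and the non-Chevalley hypothesis. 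The hardest part of (iv) will be the centrality analysis in the $(2,1)$ case, since there the group-like structure is minimal and one must extract the contradiction purely from how the central group-like acts on the four-dimensional simple subcoalgebra.
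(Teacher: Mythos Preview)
Your approach to (i) and (ii) is essentially the paper's: use Proposition~\ref{cota de biti y dasca} to force a non-trivial skew-primitive, then look at the pointed Hopf subalgebra $K$ generated by $G(H)$ and $\cP(H)$, and invoke Nichols--Zoeller. One simplification: rather than building $T_{-1}$ by hand from a single skew-primitive in (ii), the paper just notes that $K$ (of dimension $4$ or $8$, with $|G(K)|=2$) must be $T_{-1}$ or $\cA_2$ by \c Stefan's list, and $\cA_2$ contains $T_{-1}$ by Remark~\ref{para cuando necesite una sweedler}. This avoids checking the Taft relations directly.

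In (iii) there is a genuine gap. You say ``I would show $\dim B = 8$'' but give no mechanism for ruling out $B = H$. The tool you cite, Lemma~\ref{truco util bis}, controls $\ord S^2$ under a quotient map and does not help here. The paper's argument is different and short: since $H$ is of type $(2,1)$, the unique $4$-dimensional simple subcoalgebra $C$ is automatically $S$-stable, so if $B=H$ then Theorem~\ref{2n tiene dual punteado} applies and $H^*$ is pointed, contradicting the hypothesis. Once $\dim B = 8$, the fact that $B$ contains $C$ makes it non-pointed, and $|G(B)|\le |G(H)|=2$ rules out the semisimple case (which would need $|G|=4$); so $B\simeq\cA$ by \c Stefan. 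No $\ord S^2$ analysis is needed.

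Your plan for (iv) does not work as stated and is in any case far harder than necessary. You invoke Lemma~\ref{truco util}, but that lemma requires $H$ to be generated as an algebra by $C$ and $1$, which is not assumed anywhere in the statement of (iv); there is no ``generation hypothesis'' available. The paper's argument is instead a direct corollary of (i)--(iii): in type $(2,n)$, parts (ii) and (iii) give a Sweedler subalgebra $T_{-1}\subseteq H$ with $G(T_{-1})=G(H)$; any $g\in G(H)\cap\mathcal Z(H)$ would then be central in $T_{-1}$, but $G(T_{-1})\cap\mathcal Z(T_{-1})=1$. In type $(4,n)$, part (i) gives a pointed $K$ of dimension $8$ with $G(K)=G(H)$, and one reads off $|G(K)\cap\mathcal Z(K)|\le 2$ from the explicit list in Section~\ref{la clasificacion de stefan}. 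The point is that centrality in $H$ implies centrality in the subalgebra, so the bound is inherited.
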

\begin{proof}
If $H$ is of type $(2,2),\,(2,3),\,(4,1)$ or $(4,2)$ then $H$ has
a non-trivial skew-primitive. Otherwise, we can apply \ref{cota de
biti y dasca} to $H$ and we obtain a contradiction by (\ref{nec
4}). Let $K$ be the Hopf subalgebra of $H$ generated by $\GH$ and
$\PH$. By \cite[Lemma 5.5.1]{mongomeri}, $K$ is pointed and $\dim
K>|\GH|$.

If $|\GH|=4$, then $\dim K= 8$ by \cite{nicozoeler}. This proves
$(i)$.

If $|\GH|=2$, by \cite{nicozoeler} and \cite{stefan} $K$ is
isomorphic to $T_{-1}$ or $\cA_2$ (see Section \ref{la
clasificacion de stefan}). But $\cA_2$ has a Hopf subalgebra
isomorphic to $T_{-1}$ by \ref{para cuando necesite una sweedler}.
This proves $(ii)$.

Let $H$ be as in $(iii)$ and let $C$ be the unique simple
subcoalgebra of $H$ of dimension $4$. The Hopf subalgebra $K$
generated by $C$ has dimension $8$ or $16$. We claim that $\dim
K=8$ and therefore $K\simeq \cA$ by \cite{stefan}. In fact, if
$K=H$, then $H^*$ is pointed by \ref{2n tiene dual punteado}. But
this cannot happen, since $H^{*}$ is non-pointed by hypothesis;
then $\dim K=8$.

Finally, we prove $(iv)$. If $H$ is of type $(2,n)$ with $1\leq
n\leq 3$, then by $(ii)$ and $(iii)$ it contains a Sweedler
subalgebra $T_{-1}$. In particular, $G(T_{-1}) = G(H)$ and the
claim follows since $G(T_{-1})\cap \mathcal{Z}(T_{-1}) = 1$. If
$H$ is of type $(4,n)$ with $1\leq n\leq 2$, then the assertion
follows by $(i)$ and Section \ref{la clasificacion de stefan}.
\end{proof}

\smallbreak

\emph{We assume for the rest of the paper that $H$ is a Hopf
algebra of dimension $16$ which has not the Chevalley property. }

\smallbreak Note that by \ref{chevalley sii su dual}, if $H^{*}$
is non-pointed, then it does not have the Chevalley property
either.

\smallbreak In the next subsections we prove that $H$ cannot be of
type $(1,2)$ -- see \ref{16 tiene gr likes}; also, if $H$ is of
type $(s,t)$ then $H^*$ has the Chevalley property, for each
$(s,t)$ with $s>1$, according to \ref{posibles coradicales} -- see
\ref{no existe G4 mas coM}; \ref{G4 mas coM mas coM es dual
punteada}; \ref{G2 mas coM or mas coM mas coM es dual punteada}
and \ref{G2 mas coM mas coM}. Then the Theorem \ref{main result}
is proved.

\subsection{Type $(1,2)$}

\begin{obs}\label{C y D generan H entonces C tambien}
Let $H$ be a finite-dimensional Hopf algebra generated by two
simple subcoalgebras $C$ and $D$ such that $S(C) =D$. Then $C$ and
$1$ generate $H$ as an algebra.

Indeed, the subalgebra $A$ of $H$ generated by $C$ and $1$, is a
sub-bialgebra. Since $\dim H<\infty$, $A$ is a Hopf subalgebra and
then $D=S(C)\subseteq A$.
\end{obs}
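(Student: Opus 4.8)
The plan is to let $A$ denote the subalgebra of $H$ generated by $C$ and $1$, and to show that $A = H$ by proving that $A$ is stable under the antipode; this forces $D = S(C)$ into $A$, and since $C$ and $D$ generate $H$, we will be done. So the whole argument reduces to upgrading the subalgebra $A$ to a Hopf subalgebra.

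First I would verify that $A$ is a sub-bialgebra of $H$. Since $C$ is a subcoalgebra, $\D(C) \subseteq C \ot C \subseteq A \ot A$ and $\e(C) \subseteq k$; as $\D$ and $\e$ are algebra morphisms, $\D(1) = 1 \ot 1$, $\e(1) = 1$, and $A$ is generated as an algebra by $C$ together with $1$, it follows that $\D(A) \subseteq A \ot A$ and $\e(A) \subseteq k$. Hence $A$ is at once a subalgebra and a subcoalgebra of $H$, i.e. a sub-bialgebra.

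Next, since $\dim H < \infty$ we have $\dim A < \infty$, and I would invoke the standard fact that a finite-dimensional sub-bialgebra of a Hopf algebra is automatically a Hopf subalgebra (see e.g. \cite{mongomeri}); concretely, the antipode $S$ of $H$ restricts to $A$, so $S(A) \subseteq A$. This is the one nontrivial step, and I expect it to be the main obstacle: it is exactly here that finite-dimensionality is indispensable, since it guarantees that $S$ is bijective and that the convolution inverse of the inclusion $A \hookrightarrow H$ already takes values in $A$, equipping $A$ with its own antipode. Everything else is formal, following from $C$ being a subcoalgebra and $S$ being an algebra anti-homomorphism. Finally, from $S(A) \subseteq A$ and $C \subseteq A$ we obtain $D = S(C) \subseteq A$; as $H$ is generated as an algebra by $C$ and $D$, and both lie in $A$, we conclude $A = H$, so that $C$ and $1$ generate $H$.
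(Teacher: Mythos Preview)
Your proof is correct and follows essentially the same approach as the paper: both let $A$ be the subalgebra generated by $C$ and $1$, observe that $A$ is a sub-bialgebra, invoke finite-dimensionality to conclude $A$ is a Hopf subalgebra, and then use $D=S(C)\subseteq A$ to finish. You have simply fleshed out the details that the paper leaves implicit.
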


\begin{prop}\label{16 tiene gr likes}
$H$ cannot be of type (1,2).
\end{prop}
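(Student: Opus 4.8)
The plan is to prove by contradiction that $H$ cannot be of type $(1,2)$. So assume $H$ has trivial group $G(H)=1$ and coradical $H_0\simeq\coM\mas\coM$, consisting of two simple subcoalgebras $C$ and $D$ of dimension $4$. First I would understand how the antipode $S$ acts on the set $\{C,D\}$. Since $S$ is a coalgebra anti-automorphism, it permutes the simple subcoalgebras of $H_0$, and since $S$ fixes the unique group-like $1$ (as $G(H)=1$), it must send $\{C,D\}$ to itself. There are two cases: either $S(C)=C$ and $S(D)=D$, or $S(C)=D$ and $S(D)=C$.

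In the first case, where both $C$ and $D$ are stable under $S$, I would apply the machinery of Section \ref{Hopf algebras generated by simple coalgebras}. The issue is that we need $H$ to be \emph{generated} by a single simple subcoalgebra stable by the antipode in order to invoke \ref{2n tiene dual punteado} or \ref{enunciado de sonia sobre el encaje de H en una extension}. So I would first consider the Hopf subalgebra $K$ generated by $C$ (which is a genuine Hopf subalgebra since $S(C)=C$). By the classification of dimension-$8$ Hopf algebras, $\dim K\in\{8,16\}$. If $\dim K=16$ then $H=K$ is generated by $C$, and since $\dim H=16=p^aq^b$ with the coradical of the form $\GH\mas\coM$-type data and $\GH\cap\mathcal{Z}(H)=1$ (because $\GH=1$), Theorem \ref{2n tiene dual punteado} forces $H^*$ to be pointed. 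But then $H^*$ has the Chevalley property, contradicting \ref{chevalley sii su dual} combined with the standing assumption that $H$ lacks the Chevalley property. If instead $\dim K=8$, then $K\simeq\cA$ is the unique non-semisimple non-pointed dimension-$8$ Hopf algebra, whose coradical already contains a group-like of order $2$; this contradicts $G(H)=1$ since $G(K)\subseteq G(H)$.

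In the second case, $S(C)=D$ and $S(D)=C$, I would invoke Remark \ref{C y D generan H entonces C tambien}: since $C$ and $D$ generate $H$ and $S(C)=D$, the single subcoalgebra $C$ together with $1$ generates $H$ as an algebra. Now $CD\supseteq CS(C)$ contains group-like-producing relations; more to the point, the antipode-stable simple subcoalgebra to work with is the four-dimensional coalgebra underlying the combined structure, and I would try to produce a group-like or a contradiction with $G(H)=1$. The cleanest route is to observe that $1\in CS(C)$ forces certain products of comatrix elements to lie in $H_0$, and by analyzing the subalgebra generated by $C$ one detects a nontrivial group-like, contradicting $G(H)=1$.

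The hard part will be the second case, where $C$ and $D$ are swapped by $S$: here neither $C$ nor $D$ is individually a Hopf subcoalgebra, so the results of Section \ref{Hopf algebras generated by simple coalgebras} do not apply directly, and one cannot immediately form an extension. I expect the resolution to rely on the structural constraint that $H$ is generated by $C$ and $1$ together with the decomposition $1=\sum e_{1j}S(e_{j1})$ in a comatrix basis of $C$: expanding products of comatrix elements of $C$ and of $D=S(C)$ inside the coradical filtration should either exhibit a genuine group-like element, contradicting $G(H)=1$, or force the coradical to be a Hopf subalgebra, contradicting the failure of the Chevalley property. Carefully tracking which products land in $H_0$ versus higher filtration pieces is the main technical obstacle.
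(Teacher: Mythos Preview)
Your Case 1 (both $C$ and $D$ stable under $S$) is essentially the paper's argument, though you should note that if $\dim K=8$ then $K$ could a priori also be semisimple; either way $K$ would have a nontrivial group-like, contradicting $G(H)=1$. Also, the contradiction when $H^*$ is pointed comes from Remark \ref{los duales de biti} (the possible types of duals of pointed Hopf algebras do not include $(1,2)$), not directly from \ref{chevalley sii su dual}.

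Your Case 2 has a genuine gap. You correctly observe that $H$ is generated by $C$ and $1$, but your plan to ``detect a nontrivial group-like'' by expanding products of comatrix elements of $C$ and $S(C)$ is only a hope, not an argument. The analogous computation (your Claim-\ref{af como en 14}-style reasoning) appears later in the paper for type $(2,2)$, but there it relies essentially on already having an epimorphism $\pi:H\to T_{-1}$ and a known group-like $g$ to pin down where the products $e_{ii}f_{jj}$ land. With $G(H)=1$ you have no such leverage: the products could simply equal $1$, giving relations rather than a contradiction.

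The paper's actual route in Case 2 is different and uses an idea you are missing: one first proves $S^4=\id$. If $G(H^*)=1$ this is Radford's formula; if $G(H^*)\neq 1$ one uses Lemma \ref{existen casi primitivos} to produce a Hopf subalgebra $K\subseteq H^*$ with $S_K^4=\id$, hence an epimorphism $H\twoheadrightarrow K^*$ onto a non-semisimple quotient, and then Lemma \ref{truco util bis} transfers $\ord S^2=2$ back to $H$. Once $S^4=\id$, \cite[Prop. 5.3]{bitidasca} guarantees that $H$ has a simple $4$-dimensional subcoalgebra stable under $S$, contradicting the assumption that $S$ swaps $C$ and $D$. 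You should replace your Case 2 sketch with this argument.
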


\begin{proof}
Suppose that $H$ is  of type $(1,2)$. Then $H^*$ is not
cosemisimple and hence it is non-semisimple by \cite{larad}.
Moreover, it must be non-pointed by \ref{los duales de biti} and
$H^*$ does not have the Chevalley property by \ref{chevalley sii
su dual}. Thus we can apply \ref{posibles coradicales} to $H^*$.

Let $C$ and $D$ be the simple subcoalgebras of $H$ of dimension 4.
If $C$ (and hence $D$) is stable by $S$, then the Hopf subalgebra
$K$ generated by $C$ is $H$. Otherwise, $K$ should be isomorphic
to $\cA$ or semisimple by the classification of $8$-dimensional
Hopf algebras. In both cases we would have $1\neq G(K) \subseteq
G(H)$. Hence by \ref{2n tiene dual punteado}, $H^*$ is pointed,
which is a contradiction. Therefore $S$ permutes $C$ with $D$, and
so $H$ is generated by $C$ and $D$ as an algebra by
\cite{nicozoeler}. In particular, $C$ and $1$ generate $H$ as an
algebra by \ref{C y D generan H entonces C tambien}.

We claim now that $S^4=\id$. Indeed, if $G(H^*)=1$ the claim
follows from Radford's formula for $S^4$. If $G(H^*)\neq 1$ then
by \ref{existen casi primitivos}, $H^*$ has a Hopf subalgebra $K$
such that $K^*$ is non-semisimple and $S_K^4=\id_K$. Then there
exists an epimorphism of Hopf algebras $\pi:H\rightarrow K^*$ and
by \ref{truco util bis}, the claim follows.

Therefore, by \cite[Prop. 5.3]{bitidasca} $H$ has a simple
subcoalgebras of dimension 4 stable by $S$, which is a
contradiction to the fact that $S$ permutes the simple
subcoalgebras.
\end{proof}

\subsection{Type $(4,1)$}

\begin{prop}\label{no existe G4 mas coM}
$H$ cannot be of type (4,1).
\end{prop}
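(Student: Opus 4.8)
Since $H$ is of type $(4,1)$, it has a unique simple subcoalgebra $C$ of dimension $4$, and $S(C)$ is again a simple subcoalgebra of dimension $4$; by uniqueness $S(C)=C$, so $C$ is stable by the antipode. Let $B$ be the subalgebra of $H$ generated by $C$ and $1$. As in Remark \ref{C y D generan H entonces C tambien}, $B$ is a finite-dimensional sub-bialgebra stable by $S$, hence a Hopf subalgebra, and $\dim B$ divides $16$; since $C\cup\{1\}\subseteq B$ we have $\dim B\in\{8,16\}$. The plan is to rule out both possibilities.

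First I would treat the case $\dim B=16$, i.e.\ $H$ is generated by $C$. Since $\dim H=2^4$, $H$ is non-semisimple (it lacks the Chevalley property), $C$ is stable by $S$, and $H_0=\GH\mas\coM$, Theorem \ref{2n tiene dual punteado} applies and gives that $H^*$ is pointed. Applying Remark \ref{los duales de biti} to the pointed Hopf algebra $H^*$ of dimension $16$, its dual $H=(H^*)^*$ would be pointed or of type $(2,1)$, $(2,2)$, $(2,3)$ or $(4,2)$. As $H$ is non-pointed and of type $(4,1)$, none of these occurs, a contradiction.

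Next I would treat $\dim B=8$. Here $B_0\subseteq H_0$ with $\dim H_0=8$; if $B$ were cosemisimple then $B=B_0=H_0$, so the coradical of $H$ would be a Hopf subalgebra and $H$ would have the Chevalley property, a contradiction. Thus $B$ is non-cosemisimple, hence non-semisimple by Larson--Radford, and it is non-pointed since it contains $C\simeq\coM$; by the classification in Section \ref{la clasificacion de stefan} the only such $8$-dimensional Hopf algebra is $\cA$, so $B\simeq\cA$. One useful preliminary reduction: the map $g\mapsto L_g|_C$ embeds $\GH$ into the group of diagonal coalgebra automorphisms of $C$ (by Theorem \ref{stefan 1.4 b}, and by the argument of Lemma \ref{truco util} that $L_{g|C}=\id_C$ forces $g=1$), which is isomorphic to a subgroup of $k^\times$; hence $\GH\simeq\ZZ/4$. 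Since $B\simeq\cA$ is normal (it is generated by the $S$-stable $C$ and is preserved by every inner automorphism by a group-like), Lemma \ref{resumen sobre ext nec} yields an exact sequence $\cA\hookrightarrow H\twoheadrightarrow k[C_2]$, whose dual is $k[C_2]\hookrightarrow H^*\twoheadrightarrow \cA^*\simeq{\mathcal A}''_4$.

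The hard part will be this last case. My aim would be to show that $H^*$ is pointed, after which Remark \ref{los duales de biti} applied to $H^*$ produces the same contradiction as before. Equivalently, one must prove that every simple $H$-module is one-dimensional. The key leverage is that $B\simeq\cA$ is basic: its dual ${\mathcal A}''_4$ is pointed, so all simple $B$-modules are one-dimensional. Since $H$ is free of rank $2$ over the normal Hopf subalgebra $B$, a Clifford-theoretic analysis of the induction/restriction along $B\subset H$ should confine the dimension of any simple $H$-module to $\{1,2\}$, and I would rule out the two-dimensional possibility using the explicit structure of $\cA$ from Lemma \ref{relaciones de comatrixes de cA} together with the identification $\GH\simeq\ZZ/4$. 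Alternatively, following the strategy announced in the Introduction for the difficult cases, one may assume $H^*$ is non-pointed and derive a contradiction by examining the dimension-$8$ Hopf subalgebras of $H^*$ coming from the above exact sequence and from Lemma \ref{existen casi primitivos}. In either form, establishing the one-dimensionality of the simple $H$-modules in the case $B\simeq\cA$ is the main obstacle; once it is settled, $H^*$ is pointed and the case $(4,1)$ is excluded.
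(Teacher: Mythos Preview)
Your treatment of the case $\dim B=16$ is correct and matches the paper exactly: Theorem \ref{2n tiene dual punteado} applies because $H_0=k[\GH]\oplus\coM$, and then Remark \ref{los duales de biti} yields the contradiction.

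The genuine gap is in the case $\dim B=8$, where you embark on a long and incomplete program (normality of $B$, the extension $\cA\hookrightarrow H\twoheadrightarrow k[C_2]$, a Clifford-theoretic analysis of simple $H$-modules) that you yourself flag as ``the hard part''. None of this is needed: the paper disposes of this case by an elementary counting argument. Since $C$ is the \emph{unique} $4$-dimensional simple subcoalgebra of $H$ and $|G(\cA)|=2<4=|\GH|$, pick $g\in\GH\smallsetminus G(B)$; then $L_g(C)=C$. Because $B\simeq\cA$ is generated as an algebra by $C$ alone and $gC=C=Cg$, every word in $g$ and elements of $C$ that contains at least one factor from $C$ already lies in $B$; hence the Hopf subalgebra generated by $B$ and $g$ is $B+k[g]$, of dimension strictly between $8$ and $16$, contradicting Nichols--Zoeller. (Even more directly: from $S(C)=C$ one has $1=\sum_j e_{1j}S(e_{j1})\in C\cdot C$, so $g=g\cdot 1\in (gC)\cdot C=C\cdot C\subseteq B$, contradicting $g\notin B$.) Either way the case is immediate, and your ``main obstacle'' evaporates.

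A further issue: your claim that $\GH\simeq\ZZ/4$ via $g\mapsto L_{g|C}$ is not justified. Theorem \ref{stefan 1.4 b} diagonalizes a \emph{single} finite-order coalgebra automorphism of $\coM$; it does not force an abelian group of such automorphisms to be simultaneously diagonal. In fact $\Aut_{\mathrm{coalg}}(\coM)\simeq PGL_2(k)$ contains copies of $C_2\times C_2$, so the embedding argument does not rule out $\GH\simeq C_2\times C_2$. This does not matter once you use the short argument above, but as stated your reduction to the cyclic case is unsupported.
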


\begin{proof}Let
$K$ be the Hopf subalgebra of $H$ generated by $C$, the simple
subcoalgebra of $H$ of dimension $4$. Note that $\dim K \neq 16$
since otherwise $H^*$ would be pointed by \ref{2n tiene dual
punteado}, and this would contradict \ref{los duales de biti}.
Hence $\dim K=8$ and $K$ is non-pointed. If $K$ is semisimple,
then by \cite{larad} $K$ is cosemisimple and $K = H_{0}$ by
counting, a contradiction to the hypothesis on $H$. Hence
$K\simeq\cA$.

As $G(\cA)=C_2$, there exists $g\in\GH - G(K)$. Since $C$ is the
unique simple subcoalgebra of $H$ of dimension $4$, $C$ is stable
by $L_g$, which is a coalgebra automorphism of $H$. Let $B$ be the
Hopf subalgebra generated by $K$ and $g$. Since the multiplication
is associative and $C$ generates $K$ as an algebra, we have that
$B=K+k[g]$ as a vector spaces; in particular $8<\dim B<16$. This
is impossible by \cite{nicozoeler}.
\end{proof}

We finish this subsection with a criterion that helps us to know
when $H^*$ is pointed. The key of argument comes from the proof of
\cite[Thm. 2.1]{gaston}.

\begin{lema}\label{como el teo de gaston}
Suppose that $H$ fits into an exact sequence
$K\overset{\imath}\hookrightarrow H\overset{\pi}\twoheadrightarrow
k[C_2]$, where $K^\ast$ is pointed. Then $H^\ast$ is pointed.
\end{lema}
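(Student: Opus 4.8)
The plan is to exploit the exact sequence $K\overset{\imath}\hookrightarrow H\overset{\pi}\twoheadrightarrow k[C_2]$ together with the hypothesis that $K^*$ is pointed, in order to bound the dimension of the coradical $(H^*)_0$ from above by $4$, which forces every simple $H$-module to be one-dimensional and hence $H^*$ to be pointed. First I would dualize the exact sequence. Since $\dim H<\infty$, applying the duality functor sends $K\hookrightarrow H\twoheadrightarrow k[C_2]$ to a new exact sequence $k[C_2]^*\hookrightarrow H^*\twoheadrightarrow K^*$; here $k[C_2]^*\simeq k[C_2]$ as a Hopf algebra and $K^*$ is pointed by hypothesis. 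Thus $H^*$ is itself an extension of a pointed Hopf algebra $K^*$ by the group algebra $k[C_2]$, and the task reduces to showing that such an extension is pointed.

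Next I would analyze the coradical of the extension $H^*$. The key principle is that an extension of a pointed Hopf algebra by $k[C_2]$ is pointed, since both the kernel $k[C_2]$ and the quotient $K^*$ are pointed and the coradical is controlled by these pieces. Concretely, I would argue that any simple $H$-comodule is at the same time a simple comodule compatible with the coalgebra projection $H^*\twoheadrightarrow K^*$ and the inclusion $k[C_2]\hookrightarrow H^*$; since simple $K^*$-comodules are one-dimensional (because $K^*$ is pointed) and the $k[C_2]$-part only contributes group-likes, one concludes that every simple $H^*$-comodule is one-dimensional. Equivalently, $\dim (H^*)_0=\dim G(H^*)$, so $H^*$ is pointed. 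The comparison with \cite[Thm. 2.1]{gaston} is meant to reuse exactly this dimension-counting argument: the cleftness of the finite-dimensional extension (guaranteed by \cite[Thm. 2.2]{sch}) lets one write $H^*\simeq K^*\#\,k[C_2]$ as coalgebras, and a simple comodule of a crossed product built from pointed data is forced to be one-dimensional.

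The main obstacle I anticipate is making the passage from ``both the kernel and the quotient are pointed'' to ``the extension is pointed'' fully rigorous, since pointedness is not automatically preserved under arbitrary extensions without using the specific structure here. The cleanest route is the cocycle/crossed-product description: one shows that as a coalgebra $H^*$ is a smash (or crossed) coproduct of $k[C_2]$ and $K^*$, so its coradical is contained in the tensor product of the respective coradicals. Because $k[C_2]$ is cosemisimple with all simple comodules trivial and $(K^*)_0$ consists only of group-likes (pointedness of $K^*$), the coradical $(H^*)_0$ is spanned by products of group-likes, hence itself a group algebra. I would therefore spend the most care verifying that the coalgebra structure of the extension does indeed restrict the coradical in this way, invoking \cite[Subsection 3.2]{andrudevoto} and Theorem \ref{recuperar el centro de la ext} to identify the datum, and then reading off that $G(H^*)$ exhausts $(H^*)_0$. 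Once this is established, pointedness of $H^*$ follows immediately.
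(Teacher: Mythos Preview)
Your strategy has a genuine gap: the principle ``an extension of a pointed Hopf algebra by a pointed Hopf algebra is pointed'' is false in general, and your proposed bound $\dim(H^*)_0\le 4$ cannot be obtained from the data given. For a concrete obstruction, the Kac--Paljutkin semisimple Hopf algebra of dimension $8$ sits in an exact sequence $k[C_2\times C_2]\hookrightarrow H_8\twoheadrightarrow k[C_2]$ with both ends group algebras (hence pointed), yet $H_8$ is not pointed. So neither the dualized exact sequence $k[C_2]\hookrightarrow H^*\twoheadrightarrow K^*$ nor a crossed-coproduct description of the coalgebra $H^*$ will force simple $H^*$-comodules to be one-dimensional without further input. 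In particular, the step ``the coradical of the crossed coproduct is contained in the tensor product of the coradicals, hence spanned by products of group-likes'' is exactly where the argument breaks: even when that containment holds it only gives $\dim(H^*)_0\le 2\cdot|G(K^*)|$, and since $K^*$ is a pointed Hopf algebra of dimension $8$ one may have $|G(K^*)|\in\{2,4,8\}$, so the bound you actually get is $\dim(H^*)_0\in\{4,8,16\}$, not $4$.

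The paper's proof does not work on the coalgebra side of $H^*$ at all; it works on the \emph{algebra} side of $H$. Using the crossed-product description $H\simeq K\#_{\rightharpoonup,\sigma}k[C_2]$, one checks that the weak action of the generator of $C_2$ is an algebra automorphism of $K$, so $\Rad K$ is stable and $\Rad K\# k[C_2]$ is a nilpotent ideal; the quotient is a crossed product $(K/\Rad K)\# k[C_2]$, which is semisimple by \cite[Thm.~7.4.2]{mongomeri}. This pins down $\Rad H$ and gives $\dim(H^*)_0=\dim(H/\Rad H)=2\dim(K/\Rad K)\in\{4,8,16\}$. The case $16$ is excluded because $H$ is non-semisimple, and the case $4$ is trivially pointed. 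The crucial point you are missing is that the case $\dim(H^*)_0=8$ is \emph{not} dispatched by any general extension argument: it uses the standing hypothesis that $\dim H=16$ and $H$ lacks the Chevalley property, together with the already-proved facts (Proposition~\ref{posibles coradicales}, Remark~\ref{chevalley sii su dual}, Proposition~\ref{no existe G4 mas coM}) that rule out $H^*$ being of type $(4,1)$. Without invoking these dimension-$16$ specific results, the conclusion does not follow.
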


\begin{proof}
By subsection \ref{ext de Hopf}, $H$ is isomorphic as algebra to a
crossed product $K\#_{\rightharpoonup,\sigma} k[C_2]$. Denote by
$g$ the generator of $C_2$. Then the weak action $l(g):K\mapsto
K,\,a\mapsto (g\rightharpoonup a)$ is an isomorphism of algebras.
In particular $\Rad K$ is stable by $l(g)$ and therefore $\Rad
K\#_{\rightharpoonup,\sigma} k[C_2]$ is a nilpotent $C_2$-graded
ideal. This implies that $\Rad K\#_{\rightharpoonup,\sigma}
k[C_2]\subseteq\Rad H$. Besides $H/(\Rad
K\#_{\rightharpoonup,\sigma}k[C_2])\simeq (K/\Rad
K)\#_{\overline{\rightharpoonup},\overline{\sigma}} k[C_2]$ is a
semisimple algebra, by \cite[Thm. 7.4.2]{mongomeri}. Then $\Rad
H\subseteq\Rad K\#_{\rightharpoonup,\sigma}k[C_2]$, and hence
$H/\Rad H\simeq (K/\Rad
K)\#_{\overline{\rightharpoonup},\overline{\sigma}} k[C_2]$.

We conclude the proof examining the dimension of $(H^*)_0$. Since
$K^{*}$ is pointed, $\dim (K^*)_0=\dim (K/\Rad K)=2,4$ or $8$ and
therefore $\dim (H^*)_0=\dim(H/\Rad H)=4,8$ or $16$. If $\dim
(H^*)_0=4$, $H^*$ is clearly pointed. If $\dim (H^*)_0=8$, $H^*$
is pointed by \ref{posibles coradicales}, \ref{chevalley sii su
dual} and \ref{no existe G4 mas coM}. Since $H$ is non-semisimple,
$\dim (H^*)_0=16$ cannot occur.
\end{proof}

\subsection{Type $(4,2)$}

Throughout this subsection $C$ and $D$ are the two simples
subcoalgebras of $H$ of dimension $4$. We show in a series of
lemmata that the dual of a Hopf algebra of type $(4,2)$ is
pointed. First we compute the order of $S$. Note that $S^{2}$
preserves $C$ and $D$.

\begin{lema}\label{S en G4 mas coM mas coM}
If $H$ is of type (4,2) then $\ord S^2_{\mid C}=\ord S^2_{\mid
D}=2$. Moreover, $\ord S=4$.
\end{lema}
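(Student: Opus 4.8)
The plan is to exploit the structure of the two four-dimensional simple subcoalgebras $C$ and $D$ together with the constraints imposed by the group-like elements, which by Lemma \ref{existen casi primitivos} part (i) form a group of order $4$ arising from a pointed Hopf subalgebra $K$ of dimension $8$. First I would analyze the action of $S^2$ on $C$. Since $S^2$ is a coalgebra automorphism preserving each simple subcoalgebra (as noted before the statement), Theorem \ref{stefan 1.4 b} applies: there is a comatrix basis $\{e_{ij}\}$ of $C$ and a root of unity $\omega$ with $S^2(e_{ij})=\omega^{i-j}e_{ij}$ and $\ord\omega=\ord S^2_{\mid C}$. The key observation is that $H$ is \emph{not} cosemisimple, so $S^2$ cannot be the identity on $C$ (otherwise, combined with the pointed part, one would be pushed toward semisimplicity via Larson--Radford). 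This forces $\ord S^2_{\mid C}>1$.

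Next I would bound $\ord S^2_{\mid C}$ from above. The natural tool is to produce an epimorphism onto a smaller non-semisimple Hopf algebra and apply Lemma \ref{truco util bis}, or alternatively to use the explicit description of the $8$-dimensional non-semisimple Hopf algebras from Section \ref{la clasificacion de stefan}, where the only simple coalgebra of dimension $4$ occurs in $\cA=(\cA''_4)^*$ and there $S^2_{\mid\coM}$ has order exactly $2$ (visible from the eigenvalues $\pm i$ of $g$ acting via $\rho$ in Lemma \ref{las representations de cA dual}, giving $\omega$ a primitive square root of unity). I expect that by considering the Hopf subalgebra generated by $C$, which is either $8$-dimensional and isomorphic to $\cA$, or all of $H$, one pins down $\ord S^2_{\mid C}=2$; the same argument applies verbatim to $D$, giving $\ord S^2_{\mid D}=2$.

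Finally, to conclude $\ord S=4$, I would combine the information on $S^2$ with the behavior of $S$ on the group-likes. On the pointed subalgebra $K$ of dimension $8$ the antipode already has a known order from Stefan's classification (the nontrivial cases give $\ord S=4$, since $\ord S^2=2$ there). Since $\ord S^2=2$ on both $C$ and $D$ and $S^2$ restricted to $K$ likewise has order dividing $2$, we get $S^2\neq\id$ but $S^4=\id$ globally once we check that $S^2$ acts trivially nowhere essential is avoided; more precisely $\ord S = 2\cdot\ord S^2$ unless $S^2=\id$, and here $\ord S^2=2$ forces $\ord S\in\{4\}$ provided $S^2\neq\id$, which we have established.

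The main obstacle I anticipate is ruling out the possibility that $\ord S^2_{\mid C}$ is larger than $2$ (a priori it could be $4$ if $\omega$ were a primitive fourth root of unity). Handling this requires genuinely using that the relevant $8$-dimensional building block is $\cA$ and not something with a higher-order antipode square, so the crux is to correctly set up the epimorphism or subalgebra to which Lemma \ref{truco util bis} or the explicit list of Section \ref{la clasificacion de stefan} can be applied, thereby transferring the order computation from the well-understood dimension-$8$ situation to $H$.
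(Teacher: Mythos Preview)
Your structural approach has a real gap, and it differs substantially from the paper's argument. The paper does \emph{not} analyze the Hopf subalgebra generated by $C$; instead it uses a trace computation. From the decomposition $H=K\oplus C\oplus D$ into $S^2$-stable subspaces (with $K$ the pointed Hopf subalgebra of dimension $8$), non-semisimplicity of $H$ and of $K$ gives $\Tr(S^2)=\Tr(S^2_{\mid K})=0$, and Larson--Radford's inequality $\Tr(S^2_{\mid\coM})\ge 0$ then forces $\Tr(S^2_{\mid C})=\Tr(S^2_{\mid D})=0$. With the comatrix basis from Theorem \ref{stefan 1.4 b} this reads $2+\omega+\omega^{-1}=0$, i.e.\ $(1+\omega)^2=0$, so $\omega=-1$ and $\ord S^2_{\mid C}=2$ directly. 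No upper/lower bound dichotomy is needed.

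Your proposed route breaks down in two places. First, the Hopf subalgebra generated by $C$ need not be isomorphic to $\cA$: if $S$ permutes $C$ and $D$ (which does occur---Proposition \ref{G4 mas coM mas coM es dual punteada} treats this as a separate case), then by Remark \ref{C y D generan H entonces C tambien} this subalgebra is all of $H$, and you have no smaller object to transfer the order from. Even when $S(C)=C$ and the subalgebra has dimension $8$, you have not excluded that it is \emph{semisimple} (e.g.\ of coradical type $k^4\oplus\coM$); in fact the paper later rules this out precisely by invoking Lemma \ref{S en G4 mas coM mas coM}, so using ``the subalgebra is $\cA$'' here would be circular. Second, your lower bound ``$S^2_{\mid C}\neq\id$ because otherwise Larson--Radford forces cosemisimplicity'' is not justified as stated: $S^2$ is already nontrivial on $K$, so $S^2_{\mid C}=\id$ alone does not make $S^2=\id$ on $H$. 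The trace argument handles both bounds simultaneously and avoids these issues; for the final claim $\ord S=4$ your reasoning via $\ord S_{\mid K}=4$ is fine and matches the paper.
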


\begin{proof}
Let $K$ be the pointed Hopf subalgebra of $H$ of dimension $8$
given in \ref{existen casi primitivos}. $(i)$. Then $H=K\mas C\mas
D$ is a direct sum of $S^{2}$-stable subspaces. Since $H$ and $K$
are non-semisimple, $\Tr(S^2)=\Tr(S^2_{\mid K})=0$. Moreover, by
\cite[Lemma 3.2]{larad}, we have that $\Tr(S^2_{\mid\coM})\geq 0$,
hence $\Tr(S^2_{\mid D})=\Tr(S^2_{\mid C})=0$.

Let $\{e_{ij}\mid1\leq i,j\leq2\}$ be a comatrix basis of $C$ such
that: $S^2(e_{ij})=\omega^{i-j}e_{ij}$ with $\omega\in k$ and
$\ord\omega=\ord S^2_{\mid C}=n$ by \ref{stefan 1.4 b}. Since
$0=\Tr(S^2_{\mid C})=2+\omega+\omega^{-1}$, multiplying by
$\omega$ on both sides we get $0=1+2\omega+\omega^2=(1+\omega)^2$.
Hence $\omega=-1$ and hence $\ord S^2_{\mid C}=2$. The same holds
true for $D$ instead of $C$.

Finally, $\ord S=4$ since by \cite{stefan}, $\ord S_{\mid K}=4$.
\end{proof}

\smallbreak

\begin{lema}\label{gr like central en G4 mas coM mas coM}
Let $H$ be of type (4,2) and suppose that there exists
$g\in\GH\cap\mathcal{Z}(H)$ and $H$ is generated as an algebra by
$C$ and $1$. Then $H^*$ is pointed.
\end{lema}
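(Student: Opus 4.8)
The plan is to realize $H$ as an extension of a suitable $8$-dimensional Hopf algebra $K$ by $k[C_2]$ with $K^{*}$ pointed, and then invoke \ref{como el teo de gaston}; the central group-like $g$ (which has order $2$) is the engine of the construction.

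First I would quotient by the central group-like. Since $k\langle g\rangle\simeq k[C_2]$ is a central, hence normal, Hopf subalgebra, \ref{resumen sobre ext nec} gives a central exact sequence $k[C_2]\hookrightarrow H\overset{q}{\twoheadrightarrow}\bar H$ with $\bar H:=H/H(g-1)$ of dimension $8$. As an extension of a semisimple Hopf algebra by a semisimple one is semisimple and $H$ is not, $\bar H$ is non-semisimple. I would then pin down the action of $L_{g}$ on the two simple subcoalgebras: were $L_{g}(C)=C$, then, since $q(g)=1$ and $H$ is generated by $C$ and $1$, \ref{truco util} would give $\bar H\subseteq k[G(\bar H)]$, forcing $\bar H$ to be a (semisimple) group algebra, a contradiction. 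Hence $L_{g}$ interchanges $C$ and $D$, so $q(D)=q(gC)=q(g)q(C)=q(C)$ is a single simple subcoalgebra of $\bar H$, nonzero because $\e(C)\neq 0$ and still $4$-dimensional because $\coM$ is a simple coalgebra. Thus $\bar H$ is non-semisimple and non-pointed of dimension $8$, so $\bar H\simeq\cA$ by \cite{stefan}.

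Next I would produce the projection onto $k[C_2]$. By \ref{proyections desde la unica}(ii), $\cA$ sits in an exact sequence $T\hookrightarrow\cA\overset{\psi}{\twoheadrightarrow}k[C_2]$ with $T\simeq T_{-1}$; set $\pi:=\psi\circ q\colon H\twoheadrightarrow k[C_2]$ and $K:=H^{\co\pi}$. By \ref{resumen sobre ext nec}, $\dim K=8$, and since the quotient is a (cocommutative) group algebra, $K$ is a Hopf subalgebra and $K\hookrightarrow H\twoheadrightarrow k[C_2]$ is exact. One checks that $g\in K$ remains central and that $q(K)=\cA^{\co\psi}=T\simeq T_{-1}$, so that $K$ itself lies in a central exact sequence $k[C_2]\hookrightarrow K\twoheadrightarrow T_{-1}$; in particular $K$ is a non-semisimple Hopf algebra of dimension $8$.

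The main obstacle is the last step: to apply \ref{como el teo de gaston} I must show that $K^{*}$ is pointed. By \ref{para cuando necesite una sweedler} together with the classification of \cite{stefan}, the only non-semisimple Hopf algebra of dimension $8$ whose dual fails to be pointed is $\mathcal{A}''_{4}$ (whose dual is $\cA$); moreover the candidates compatible with the central sequence above are $\mathcal{A}'_{4}$, $\mathcal{A}''_{4}$ and $\mathcal{A}_{2,2}\simeq T_{-1}\ot k[C_2]$, and only $\mathcal{A}''_{4}$ has non-pointed dual. The whole difficulty therefore collapses to excluding $K\simeq\mathcal{A}''_{4}$, which is distinguished from $\mathcal{A}'_{4}$ solely by the relation on the square of its skew-primitive generator ($x^{2}=g^{2}-1$ versus $x^{2}=0$). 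This cannot be decided abstractly and forces an explicit calculation inside $H$: using that $H$ is generated by $C$ and $1$ and transporting the comatrix relations of \ref{relaciones de comatrixes de cA} through $q$, I would lift the non-trivial skew-primitive of $T_{-1}\simeq q(K)$ to $K$ and compute its square, showing it vanishes and hence ruling out $\mathcal{A}''_{4}$. Once $K^{*}$ is pointed, \ref{como el teo de gaston} gives that $H^{*}$ is pointed. I expect this final computation, not the structural reductions, to be the crux.
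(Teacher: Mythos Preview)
Your overall plan --- quotient by the central $g$, identify $\bar H\simeq\cA$, compose with $\psi:\cA\to k[C_2]$, and invoke Lemma~\ref{como el teo de gaston} --- matches the paper's, and your argument that $\bar H$ is non-pointed via the swap $L_g(C)=D$ is correct and actually sharper than the paper's treatment of that step. The gap is in the second half: the claim that $K=H^{\co\pi}$ is a Hopf subalgebra ``since the quotient is a (cocommutative) group algebra'' is false in general. For the surjection $T_{-1}\to k[C_2]$ given by $g\mapsto c$, $x\mapsto 0$, one computes $T_{-1}^{\co\pi}=k\cdot1\oplus k\cdot gx$, a left coideal subalgebra that is \emph{not} a subcoalgebra (look at $\Delta(gx)=gx\ot 1+g\ot gx$). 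Without this, your exact sequence $k[C_2]\hookrightarrow K\to T_{-1}$, the trichotomy $K\in\{\cA'_4,\cA''_4,\cA_{2,2}\}$, and the proposed computation to exclude $\cA''_4$ all rest on an unjustified step.

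The paper avoids the difficulty by reversing the logic: it first exhibits a concrete Hopf subalgebra of $H$ and then shows it equals $H^{\co\pi}$. Your own swap argument already forces $G(H)\simeq C_2\times C_2$: were $G(H)=\langle c\rangle\simeq C_4$, then $g=c^2$ by \ref{existen casi primitivos}(iv), whence $L_g=L_c^2$ would fix $C$, contradicting the swap. By \ref{existen casi primitivos}(i) and \cite{stefan}, $H$ then contains a pointed Hopf subalgebra isomorphic to $\cA_{2,2}$. Since $\cA_{2,2}\not\simeq\cA$, the image $q(\cA_{2,2})$ is a proper Hopf subalgebra of $\cA$, hence lies in $T=\cA^{\co\psi}$; therefore $\cA_{2,2}\subseteq H^{\co\pi}$, and equality holds by dimension. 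This simultaneously proves that $H^{\co\pi}$ is a Hopf subalgebra and identifies it as the self-dual $\cA_{2,2}$ --- so Lemma~\ref{como el teo de gaston} applies immediately, with no computation needed and both $\cA'_4$ and $\cA''_4$ excluded at once.
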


\begin{proof}
By \ref{existen casi primitivos} $(iv)$, the order of $g$ is $2$.
Then $H$ fits into an exact sequence
$k[C_2]\overset{\imath}\hookrightarrow
H\overset{\pi}\twoheadrightarrow K$, where $K= H/k[C_2]^{+}H$.
Since $H$ is non-semisimple, $K$ must be non-semisimple by
\cite[Thm. 7.4.2]{mongomeri}. If $K$ is pointed, then $H^{*}$ is
pointed by \ref{como el teo de gaston}. If $K$ is not pointed,
then $K\simeq \cA$ by the classification given by \cite{stefan},
see Section \ref{la clasificacion de stefan}.

Suppose that $\GH =\langle c \rangle$ is a cyclic group of order
$4$. Then $L_g =L_{c^{2}} = L_{c}^{2}$ must fix $C$ and $\pi(g) =
1$ because $G(\cA)\cap \mathcal{Z}(\cA) = 1$. Then by \ref{truco
util}, we obtain a contradiction. Therefore $\GH\simeq C_2\times
C_2$. Hence, by \ref{existen casi primitivos} $(i)$ and
\cite{stefan} $H$ has a Hopf subalgebra isomorphic to $\cA_{2,2}$.

Since $\cA_{2,2}$ and $\cA$ are not isomorphic, it follows that
$\pi(\cA_{2,2})\subseteq T_{-1}\subseteq\cA$.

Let $T_{-1}\overset{\imath}\hookrightarrow\cA
\overset{\psi}\twoheadrightarrow k[C_2]$ be the exact sequence
given in \ref{proyections desde la unica}. Then
$\psi\circ\pi:H\rightarrow k[C_2]$ is an epimorphism of Hopf
algebras and $\cA_{2,2}\subseteq\,H^{\co(\psi\circ\pi)}$. Then by
\ref{resumen sobre ext nec}, $H$ fits into the exact sequence
$\cA_{2,2}\overset{\imath}\hookrightarrow H
\overset{\psi\circ\pi}\twoheadrightarrow k[C_2]$. Since
$\cA_{2,2}$ is self-dual, $H^\ast$ is pointed by \ref{como el teo
de gaston}.
\end{proof}

\smallbreak

\begin{prop}\label{G4 mas coM mas coM es dual punteada}
Let $H$ be of type (4,2). Then $H^*$ is pointed.
\end{prop}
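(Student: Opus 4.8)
The plan is to prove that a Hopf algebra $H$ of type $(4,2)$ has pointed dual by reducing, via the structural lemmata already established, to the situation handled in Lemma~\ref{gr like central en G4 mas coM mas coM}. By \ref{existen casi primitivos}~$(i)$, $H$ contains a pointed Hopf subalgebra $K$ of dimension $8$ with $G(K)=G(H)$; and by \ref{existen casi primitivos}~$(iv)$, $|G(H)\cap\mathcal{Z}(H)|\leq 2$. The key case to reach is the hypothesis of \ref{gr like central en G4 mas coM mas coM}, namely that there is a central group-like $g$ of order $2$ \emph{and} that $H$ is generated as an algebra by $C$ and $1$. So the two things I would need to establish are: first, that $H$ is generated by one of the two simple subcoalgebras together with $1$; and second, that $G(H)$ meets $\mathcal{Z}(H)$ nontrivially.

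First I would attack the generation statement. The antipode $S$ permutes the set of simple subcoalgebras of dimension $4$, so either $S$ fixes both $C$ and $D$, or it interchanges them. If $S(C)=D$ (and hence $S(D)=C$), then by \ref{C y D generan H entonces C tambien}, $C$ and $1$ already generate $H$ as an algebra, which is exactly what I want. If instead $S(C)=C$ and $S(D)=D$, then $C$ is a simple subcoalgebra stable by the antipode, so the Hopf subalgebra it generates has dimension $8$ or $16$; if it were $16$, then $H^*$ would be pointed by \ref{2n tiene dual punteado} and we would be done immediately, while if it is $8$ the subalgebra generated by $C$ must be $\cA$ (it is non-semisimple non-pointed of dimension $8$), and the same holds for $D$, giving two copies of $\cA$ inside $H$ whose interaction I can analyze. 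The cleanest route is to observe that in the interchanging case we are directly in the generated-by-$C$ situation, and to argue the antipode-fixing case leads either to pointedness of $H^*$ directly or to a configuration reducible to the central group-like analysis.

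Next I would produce a central group-like of order $2$. Here I would exploit \ref{S en G4 mas coM mas coM}, which gives $\ord S^2_{\mid C}=\ord S^2_{\mid D}=2$ and $\ord S=4$. The natural candidate for a central element is built from the antipode: Radford's formula relates $S^4$ to conjugation by the distinguished group-like $\alpha\in G(H^*)$ and the modular element $a\in G(H)$, and since $\ord S=4$ one extracts constraints forcing a nontrivial group-like into the center. Concretely, because $K$ is a pointed Hopf algebra of dimension $8$ (so one of the algebras of Section~\ref{la clasificacion de stefan} or a group algebra), its group $G(K)=G(H)$ is of order $4$ and contains the group-like defining the Sweedler subalgebra; the element of order $2$ in that Sweedler copy, or the product giving the modular data, is the source of a central group-like. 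I would check that $|G(H)\cap\mathcal{Z}(H)|\leq 2$ from \ref{existen casi primitivos}~$(iv)$ is actually an equality here, i.e.\ that the intersection is nontrivial, by ruling out $G(H)\cap\mathcal{Z}(H)=1$ using the central exact sequence structure and the constraint $\ord S=4$.

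The hard part will be the second step, namely guaranteeing the existence of a \emph{central} group-like of order $2$, since $G(H)$ need not meet the center a priori and the two possible group structures $C_4$ and $C_2\times C_2$ behave differently (as the internal bookkeeping in \ref{gr like central en G4 mas coM mas coM} shows, where the $C_4$ case is excluded via \ref{truco util} and the $C_2\times C_2$ case produces an $\cA_{2,2}$ subalgebra). Once both ingredients are in hand, the proof finishes by a single appeal to \ref{gr like central en G4 mas coM mas coM}, concluding that $H^*$ is pointed. I expect that the combinatorics of the antipode acting on $C$ and $D$, together with Radford's $S^4$ formula and the order constraint $\ord S=4$ from \ref{S en G4 mas coM mas coM}, will be exactly what forces a central group-like into play, and that the whole argument therefore collapses into verifying the hypotheses of the preceding lemma.
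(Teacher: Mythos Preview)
Your plan has a genuine structural gap: you want to funnel every case through Lemma~\ref{gr like central en G4 mas coM mas coM}, but that lemma has \emph{two} hypotheses --- a central group-like of order $2$, and $H$ generated by $C$ and $1$ --- and in general neither one holds.

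For the generation hypothesis: when $S$ fixes both $C$ and $D$ and the Hopf subalgebra $K$ generated by $C$ has dimension $8$, then $C$ and $1$ do \emph{not} generate $H$; they generate $K\simeq\cA$. Your sentence ``two copies of $\cA$ inside $H$ whose interaction I can analyze'' is not an argument, and you cannot recover the hypothesis of Lemma~\ref{gr like central en G4 mas coM mas coM} here. The paper handles this sub-case by a completely different mechanism: one shows, using the action of $L_g$, $R_g$ and $\adl(g)$ on $\{C,D\}$ for $g\in G(H)\smallsetminus G(K)$, that $K$ is a \emph{normal} Hopf subalgebra, and then applies Lemma~\ref{como el teo de gaston} (an exact sequence with $K^*$ pointed over $k[C_2]$), not Lemma~\ref{gr like central en G4 mas coM mas coM}.

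For the central group-like: your hope that Radford's formula and $\ord S=4$ will force $G(H)\cap\mathcal{Z}(H)\neq 1$ is not substantiated, and the paper does not prove any such thing. In the case where $S$ permutes $C$ and $D$, the paper instead assumes $H^*$ non-pointed, produces an epimorphism $\pi:H\twoheadrightarrow B$ with $B$ non-semisimple of dimension $4$ or $8$ (coming from a suitable Hopf subalgebra of $H^*$), and then splits according to the structure of $G(H)$. When $G(H)\simeq C_4$ the argument is a delicate analysis of the $S^2$-eigenspaces in $B\simeq\cA'''_{4,i}$ and a contradiction with $\dim H^{\co\pi}$; no central group-like appears. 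When $G(H)\simeq C_2\times C_2$ one finds $g\neq 1$ with $\pi(g)=1$ and $\adl(g)(C)=C$; if $g$ happens to be central, Lemma~\ref{gr like central en G4 mas coM mas coM} applies, but if $g\notin\mathcal{Z}(H)$ one instead uses Lemma~\ref{truco util} to force $\pi(H)\subseteq k[G(B)]$, contradicting surjectivity. So Lemma~\ref{gr like central en G4 mas coM mas coM} is invoked only in one branch of one sub-case; the other branches require different tools that your outline does not supply.
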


\begin{proof}
We divide the proof in two cases, according to the action of $S$
on $\{C,D\}$.

\smallbreak

{\it \underline {Case 1}: $C$ and $D$ are stable by $S$.}

Let $K$ be the Hopf subalgebra of $H$ generated by $C$. First,
suppose that $\dim K=8$. Then $K\simeq\cA$ because $K$ is
non-semisimple by \ref{S en G4 mas coM mas coM} and \cite{larad}.
Let $g\in\GH- G(K)$. We claim that $K$ is a normal Hopf subalgebra
and hence $H^\ast$ is pointed by \ref{como el teo de gaston}.
Indeed, if $L_g$ and $R_g$ fix $C$ we get a contradiction as in
the proof of \ref{no existe G4 mas coM}. Thus we may assume that
$L_g(C)=D$ and hence $L_g(D)=C$. Applying $S$ to the second
equality, it follows that $R_{g^{-1}}(D)=C$. Then $C$ and
therefore $K$ are stable by $\adl(g)$. Since $\ord g<\infty$, $K$
also is stable by $\ad_r(g)$. Since by \cite{nicozoeler}, $K$ and
$g$ generate $H$ as an algebra, it follows that $K$ is normal.

Now, suppose that $K=H$. Then by \ref{enunciado de sonia sobre el
encaje de H en una extension}, $H$ fits into the central exact
sequence $ k^G\overset{\imath}\hookrightarrow
H\overset{\pi}\twoheadrightarrow A$, where $G$ is a finite group
and $A^\ast$ is pointed non-semisimple. Since $A$ is
non-semisimple, $|G|\neq8,16$. Moreover, by \ref{existen casi
primitivos} $(iv)$ and \cite{stefan}, $|G|\neq4$. If $|G|= 2$,
then $H^*$ is pointed by \ref{gr like central en G4 mas coM mas
coM}. If $|G|=1$, then $H=A$ and hence $H^*$ is pointed.

\smallbreak

{\it \underline {Case 2}: $C$ and $D$ are permuted by $S$.}

Note that by \cite{nicozoeler}, $C$ and $D$ generate $H$ as an
algebra. Hence by \ref{C y D generan H entonces C tambien}, $C$
and $1$ generate $H$ as an algebra.

Suppose that $H^*$ is non-pointed. Then, by \ref{existen casi
primitivos}, there exists $\pi:H\twoheadrightarrow B$ an
epimorphism Hopf algebras, where we can assume that $B$ is
isomorphic to: $T_{-1}$, $\cA$ or $\cA'''_{4,i}$. Indeed, $H^{*}$
cannot be of type $(4,1)$ by \ref{no existe G4 mas coM}. If it is
of type $(4,2)$, then it contains a pointed Hopf subalgebra $L$ of
dimension $8$ isomorphic to $\cA'_{4} = (\cA'''_{4,i})^{*}$ or
$\cA''_{4} = \cA^{*}$, or $L$ contains a Sweedler algebra, see
\ref{para cuando necesite una sweedler}. Finally, if $H^{*}$ is of
type $(2,n)$, then by \ref{existen casi primitivos} it contains a
Sweedler algebra and the claim follows.

\smallbreak We first assume that $G(H)$ is cyclic. Say
$\GH=\langle g\rangle$. Then $L_{g^2}(C)=C$. If
$\ord(\pi(g))\leq2$, then $\pi(g^2)=1$ and by \ref{truco util}
$\pi(H)\subseteq k[G(B)]$, which is impossible because $\pi$ is an
epimorphism and $B$ is non-semisimple. Hence $\ord g = 4$. Since
$|G(T_{-1})|=|G(\cA)|=2$, $B$ must be isomorphic to $\cA'''_{4,i}$
and $\pi(g)$ generates $G(B)$. We now proceed as in the proof of
\cite[Lemma 2.7]{natale}: let $B^+$ and $B^-$ denote the following
subspaces of $B$
\begin{eqnarray*}
B^+:=\{b\in B\mid S^2(b)=b\}&\mbox{and}&B^-:=\{b\in B\mid
S^2(b)=-b\}.
\end{eqnarray*}
From the definition of $\cA'''_{4,i}$ (see subsection \ref{la
clasificacion de stefan}), it follows that $B^+=k[G(B)]$.

Let $\{e_{ij}\mid1\leq i,j\leq2\}$ be a comatrix basis of $C$ such
that $S^2(e_{ij})=(-1)^{i-j}e_{ij}$ by \ref{stefan 1.4 b}. Then
$\pi(e_{11}),\pi(e_{22})\in B^+=k[G(B)]$ and hence
$$\D(\pi(e_{11}))=\pi(e_{11})\ot\pi(e_{11})+\pi(e_{12})\ot\pi(e_{21})\in
B^+\ot B^+.
$$
Since $\pi(e_{11})\ot\pi(e_{11})\in B^+\ot B^+$ then
$\pi(e_{12})\ot\pi(e_{21})\in B^+\ot B^+$. But $\pi(e_{12})$,
$\pi(e_{21})\in B^-$, which forces $\pi(e_{12})=0$ or
$\pi(e_{21})=0$. Then $\pi(e_{11}),\pi(e_{22})\in G(B)$. We may
assume that $\pi(e_{21})=0$. Let $n\in\NN$ such that
$\pi(g^ne_{11})=1$. From
\begin{eqnarray*}
\D(g^ne_{11})=g^ne_{11}\ot g^ne_{11}+g^ne_{12}\ot g^ne_{21},
\\ \smallskip
\D(g^ne_{21})=g^ne_{21}\ot g^ne_{11}+g^ne_{22}\ot g^ne_{21},
\end{eqnarray*}
it follows that $1,g^ne_{11},g^ne_{21}\in H^{co\pi}$ and then
$2=\dim H^{co\pi}\geq 3$ (the equality follows from \cite[Thm.
2.4]{sch}), which is impossible. Then $H^*$ is pointed if
$\GH\simeq C_4$.

\smallbreak If $\GH\simeq C_2\times C_2$, there exists $g\in\GH$,
$g\neq 1$ such that $\pi(g)=1$, because $G(B)$ is cyclic. We claim
that $\adl(g)(C)=C$. Indeed, note that either
\begin{equation}\label{nec 6}
L_g(C)=C \Leftrightarrow L_g(D)=D\,\mbox{(and applying
$S$)}\Leftrightarrow R_g(C)=C\quad\mbox{or}
\end{equation}
\begin{equation}\label{nec 7}
L_g(C)=D\,\mbox{(and applying $S$)}\Leftrightarrow R_g(D)=C.
\end{equation}
In any case we obtain that $\adl(g)(C)=C$. If $g\in{\mathcal
Z}(H)$, then $H^{*}$ is pointed by \ref{gr like central en G4 mas
coM mas coM}. If $g\notin{\mathcal Z}(H)$, then $H^{*}$ is pointed
by \ref{truco util}. In both cases we get a contradiction to the
assumption that $H^{*}$ is non-pointed.

\end{proof}

\smallbreak

\subsection{Type $(2,n)$}
In this last subsection we prove that if $H$ is of type $(2,n)$,
$1\leq n\leq 3$, then $H^{*}$ is pointed.

\begin{prop}\label{G2 mas coM or mas coM mas coM es dual punteada}

(i) Let $H$ be of type (2,1) or (2,3). Then $H^*$ is pointed.

(ii) If $H$ is of type (2,2) and has a simple subcoalgebra $C$ of
dimension $4$ stable by the antipode, then $H^*$ is pointed.
\end{prop}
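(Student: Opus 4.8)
The plan is to exploit the structural results already established for non-semisimple Hopf algebras of dimension $8$ together with the extension machinery from Sections \ref{extensions} and \ref{Hopf algebras generated by simple coalgebras}. The key observation for both parts is that a Hopf algebra of type $(2,n)$ contains a Sweedler subalgebra $T_{-1}$ by Lemma \ref{existen casi primitivos} $(ii)$ and $(iii)$, and that $G(H)=G(T_{-1})=C_2$; in particular $G(H)\cap\mathcal{Z}(H)=1$ by \ref{existen casi primitivos} $(iv)$. The strategy is to produce an epimorphism $\pi\colon H\twoheadrightarrow B$ onto one of the classified $8$-dimensional non-semisimple Hopf algebras and then apply Lemma \ref{como el teo de gaston} (or its hypotheses) to conclude that $H^\ast$ is pointed.

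For part $(ii)$, where $H$ is of type $(2,2)$ with a simple subcoalgebra $C$ of dimension $4$ stable by the antipode, I would first invoke Remark \ref{C y D generan H entonces C tambien} to reduce to the subalgebra generated by $C$: since $C$ is $S$-stable, $C$ and $1$ generate a Hopf subalgebra $K$. If $\dim K=16$ then $K=H$ is generated by a single $S$-stable simple subcoalgebra of dimension $4$, so Theorem \ref{2n tiene dual punteado} applies directly (using $G(H)\cap\mathcal{Z}(H)=1$) and $H^\ast$ is pointed. If instead $\dim K=8$, then $K\simeq\cA$ by the classification \cite{stefan}, and I would then argue that $K$ is a normal Hopf subalgebra of $H$ — this is where the element $g\in G(H)\setminus G(K)$ and the adjoint actions $\adl(g)$, $\ad_r(g)$ enter, exactly as in Case 1 of the proof of \ref{G4 mas coM mas coM es dual punteada} — so that $H$ fits into an exact sequence $\cA\hookrightarrow H\twoheadrightarrow k[C_2]$ with $\cA^\ast$ pointed, whence \ref{como el teo de gaston} finishes the argument.

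For part $(i)$, the type $(2,1)$ case has only one simple subcoalgebra $C$ of dimension $4$. If $H^\ast$ were non-pointed then Lemma \ref{existen casi primitivos} $(iii)$ gives a Hopf subalgebra isomorphic to $\cA$, and Lemma \ref{proyections desde la unica} supplies the exact sequence $T\hookrightarrow\cA\twoheadrightarrow k[C_2]$; I would lift this to a projection of $H$ and use \ref{resumen sobre ext nec} to realize $H$ as an extension with $8$-dimensional kernel of pointed dual, again invoking \ref{como el teo de gaston}. For type $(2,3)$, the coradical has dimension $2+3\cdot 4=14$, so by \ref{resumen sobre ext nec}-type dimension counting the only room left forces strong constraints; here I would combine the Sweedler subalgebra from \ref{existen casi primitivos} $(ii)$ with the centrality condition $G(H)\cap\mathcal{Z}(H)=1$ to build the desired epimorphism onto an $8$-dimensional non-semisimple quotient.

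The main obstacle I anticipate is verifying the \emph{normality} of the $8$-dimensional subalgebra $K\simeq\cA$ in the type $(2,2)$ case when $C$ is $S$-stable but $\dim K=8$. One must show that $K$ is stable under both $\adl(g)$ and $\ad_r(g)$ for the extra group-like $g$; since $G(H)=C_2$ here there is no second group-like available, so the argument must instead leverage the stability of $C$ under the coalgebra automorphisms $L_g,R_g$ together with the fact that $C$ generates $K$, and then use finiteness of $\ord g$ to pass from one-sided to two-sided stability. Getting the case analysis on how $L_g$ and $R_g$ act on $C$ to close off cleanly — ruling out the possibility that $K\neq H$ yet $K$ fails to be normal — is the delicate point, and it is essentially the same mechanism that powered \ref{no existe G4 mas coM} and Case 1 of \ref{G4 mas coM mas coM es dual punteada}.
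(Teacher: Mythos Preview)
Your plan has a genuine gap in the crucial case $\dim K=8$, and you have in fact identified it yourself without resolving it. For $H$ of type $(2,n)$ we have $|G(H)|=2$, and since $K\simeq\cA$ also has $|G(\cA)|=2$, necessarily $G(K)=G(H)$. There is \emph{no} group-like $g\in G(H)\setminus G(K)$, so the normality argument via $\adl(g)$ and $\ad_r(g)$ cannot even begin; the mechanism from \ref{no existe G4 mas coM} and Case~1 of \ref{G4 mas coM mas coM es dual punteada} depended precisely on the existence of such an extra group-like. Your proposal to ``leverage the stability of $C$ under $L_g,R_g$'' does not help, because the only nontrivial $g$ already lies in $K$. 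Likewise, your sketch for type $(2,1)$ proposes to ``lift'' the sequence $T\hookrightarrow\cA\twoheadrightarrow k[C_2]$ of \ref{proyections desde la unica} to a projection of $H$, but a Hopf algebra map out of the subalgebra $\cA$ does not extend to $H$ without already knowing $\cA$ is normal --- which is exactly the missing ingredient. The type $(2,3)$ sketch is too vague to evaluate.

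The paper's argument is quite different and avoids normality of $K$ altogether. It treats all three cases at once: assuming $H^\ast$ non-pointed, the previously established results (\ref{16 tiene gr likes}, \ref{no existe G4 mas coM}, \ref{G4 mas coM mas coM es dual punteada}) force $H^\ast$ to be of type $(2,m)$ as well, so $H^\ast$ contains a Sweedler subalgebra and hence there is an epimorphism $\pi\colon H\twoheadrightarrow T_{-1}$. Restricting $\pi$ to $K\simeq\cA$ and invoking \ref{proyections desde la unica}~(i) shows $T_{-1}\subseteq K^{\co\pi}\subseteq H^{\co\pi}$; since $\dim H^{\co\pi}=16/4=4$ by \ref{resumen sobre ext nec}, one gets $H^{\co\pi}=T_{-1}$ and an exact sequence $T_{-1}\hookrightarrow H\twoheadrightarrow T_{-1}$. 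The key lemma you are missing is \ref{lo que creia que era solo el tensor}, which forces $H\simeq T_{-1}\otimes T_{-1}$, contradicting the assumption that $H$ is non-pointed.
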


\begin{proof} If $H$ is a Hopf algebra satisfying the
hypothesis of $(i)$, then it contains a simple subcoalgebra of
dimension $4$ stable by the antipode. This is clear when $H$ is of
type $(2,1)$ and when $H$ is of type $(2,3)$, the claim follows
since $\ord S$ is a power of $2$ by Radford's formula. We denote
by $C$ such a simple subcoalgebra.

We prove $(i)$ and $(ii)$ simultaneously. Let $K$ be the Hopf
subalgebra generated by $C$. By \cite{nicozoeler}, $K=H$ or
$K\simeq\cA$, since $K$ is non-pointed by construction and
non-semisimple because $|G(K)| \leq |G(H)|= 2$.

If $K=H$, then $H^*$ is pointed by \ref{2n tiene dual punteado},
and the claim is proved.

Now let $K\simeq\cA$ and assume that $H^*$ is non-pointed. Since
$H^*$ does not have the Chevalley property, $H^*$ must be of type
$(2,n)$ by \ref{posibles coradicales}, \ref{16 tiene gr likes},
\ref{no existe G4 mas coM} and \ref{G4 mas coM mas coM es dual
punteada}. Then by \ref{existen casi primitivos}, there exists
$\pi:H\rightarrow T_{-1}$ an epimorphism of Hopf algebras. Now we
restrict $\pi$ to $K\simeq \cA$. Then by  \ref{proyections desde
la unica} $(i)$, $K^{\co \pi}$ contains a copy of $T_{-1}$. Hence
by \ref{resumen sobre ext nec}, $H^{\co \pi} \simeq T_{-1}$ and
$H$ fits into an exact sequence
$T_{-1}\overset{\imath}\hookrightarrow
H\overset{\pi}\twoheadrightarrow T_{-1}$. But this cannot occur,
since by \ref{lo que creia que era solo el tensor} $H$ should be
pointed. Then $H^*$ must be pointed.
\end{proof}

\begin{prop}\label{G2 mas coM mas coM}
Let $H$ be of type (2,2). Then $H^*$ is pointed.
\end{prop}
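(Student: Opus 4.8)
The plan is to reduce to the case in which the antipode interchanges the two simple subcoalgebras, and then, assuming $H^{*}$ is non-pointed, to derive a contradiction. If either $C$ or $D$ is stable by $S$, then $H^{*}$ is pointed by \ref{G2 mas coM or mas coM mas coM es dual punteada}(ii) and we are done; so I assume $S(C)=D$ and $S(D)=C$. By \cite{nicozoeler} the coalgebras $C$ and $D$ generate $H$ as an algebra, hence by \ref{C y D generan H entonces C tambien} so do $C$ and $1$. Write $\GH=\langle g\rangle\simeq C_2$; by \ref{existen casi primitivos}(iv) we have $g\notin\mathcal{Z}(H)$. Suppose for contradiction that $H^{*}$ is non-pointed. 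Then $H$ is non-cosemisimple, so $H^{*}$ is non-semisimple, and arguing as in the opening of \ref{G2 mas coM or mas coM mas coM es dual punteada} (using that $(H^{*})^{*}=H$ is non-pointed, together with \ref{16 tiene gr likes}, \ref{no existe G4 mas coM}, \ref{G4 mas coM mas coM es dual punteada} and \ref{G2 mas coM or mas coM mas coM es dual punteada}) forces $H^{*}$ to be again of type $(2,2)$ with antipode permuting its two simple subcoalgebras. In particular, by \ref{existen casi primitivos}(ii) applied to $H^{*}$ there is a copy of $T_{-1}$ inside $H^{*}$, which dualizes (using $T_{-1}\simeq T_{-1}^{*}$) to an epimorphism of Hopf algebras $\pi\colon H\twoheadrightarrow T_{-1}$.

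First I show $\pi(g)\neq 1$. Since $S(C)=D$, the coalgebra automorphism $L_{g}$ satisfies $L_{g}(C)\in\{C,D\}$, and translating this identity by $S$ to control $R_{g}$ one checks in both cases that $\adl(g)(C)=gCg^{-1}=C$. As $g\notin\mathcal{Z}(H)$ we have $\adl(g)\neq\id$. If $\pi(g)=1$, then \ref{truco util} (in its adjoint form), applied to the generation of $H$ by $C$ and $1$, gives $\pi(H)\subseteq k[G(T_{-1})]$, contradicting that $\pi$ is onto the non-semisimple algebra $T_{-1}$. Hence $\pi(g)$ equals the non-trivial group-like $g_{B}$ of $T_{-1}$, and $\pi$ is injective on $\GH$.

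Next I locate a good comatrix basis. Since $C$ is stable by $S^{2}$ and $H$ is generated by $C$ and $1$, \ref{truco util bis} yields $\ord S^{2}_{H}=\ord S^{2}_{T_{-1}}=2$ and a comatrix basis $\{e_{ij}\}$ of $C$ with $S^{2}(e_{ij})=(-1)^{i-j}e_{ij}$. Setting $B^{+}=\{b:S^{2}(b)=b\}=k[G(T_{-1})]$ and $B^{-}=\{b:S^{2}(b)=-b\}=\Rad T_{-1}$ and using $\pi S^{2}=S^{2}\pi$, the argument of Case 2 of \ref{G4 mas coM mas coM es dual punteada} applies verbatim: $\pi(e_{11}),\pi(e_{22})\in B^{+}$, $\pi(e_{12}),\pi(e_{21})\in B^{-}$, and $\pi(e_{12})\otimes\pi(e_{21})\in (B^{+}\otimes B^{+})\cap(B^{-}\otimes B^{-})=0$. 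Thus, after relabelling, $\pi(e_{21})=0$, and surjectivity forces $\pi(e_{12})\neq 0$ (otherwise $\pi(C)\subseteq k[G(T_{-1})]$ and $\pi$ is not onto). Comparing skew-primitive types in $T_{-1}$ (where there are no non-trivial $(h,h)$-primitives) then pins down $\pi(e_{11})=1$, $\pi(e_{22})=g_{B}$ and $\pi(e_{12})\in kx\setminus\{0\}$, the index-swapped configuration being analogous.

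It remains to produce the contradiction, and this is the main obstacle, since $H^{\co\pi}$ is \emph{not} a subcoalgebra here (the term $e_{12}\otimes e_{21}$ in $\D(e_{11})$ already fails to lie in $H^{\co\pi}\otimes H^{\co\pi}$), so one cannot directly build an exact sequence $T_{-1}\hookrightarrow H\twoheadrightarrow T_{-1}$ and invoke \ref{lo que creia que era solo el tensor}. Let $T\simeq T_{-1}$ be the Sweedler subalgebra of \ref{existen casi primitivos}(ii), with group-like $g$ and non-trivial skew-primitive $v$. If some Sweedler subalgebra of $H$ maps isomorphically onto $T_{-1}$ under $\pi$ (equivalently, some non-trivial skew-primitive has non-zero image in $\Rad T_{-1}$), then $\pi$ admits a Hopf section and $H\simeq R\#T_{-1}$ is a bosonization with $\dim R=\dim H^{\co\pi}=4$; since $R$ contains the group-like $1$ and $\dim R=4<5=\dim(k\,1\oplus\coM)$, the coalgebra $R$ has no four-dimensional simple subcoalgebra, so $R$ is pointed and hence $H=R\#T_{-1}$ is pointed, contradicting that $H$ is of type $(2,2)$. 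The delicate possibility is that every non-trivial skew-primitive of $H$ maps to a trivial one, so that the nilpotent direction of $T_{-1}$ is reached only through $e_{12}\in C$; here I would run the same dichotomy for $H^{*}$ (which is likewise of type $(2,2)$ with permuting antipode and carries an epimorphism onto $T_{-1}$), so that the bosonization contradiction applies to $H^{*}$ instead, or else extract a dimension-$8$ Hopf subalgebra of $H^{*}$ and match it against Stefan's classification in Section \ref{la clasificacion de stefan}, as in the treatment of the other types. Showing that one of these two routes always closes is precisely the crux of the argument.
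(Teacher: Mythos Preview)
Your setup through the choice of comatrix basis and the computation of $\pi(e_{ij})$ is correct and parallels the paper's Claim \ref{proyeccion a la sweedler}. The gap is in the final contradiction. First, the bosonization step is not justified: even granting a Hopf section $\gamma:T_{-1}\to H$ and the Radford--Majid decomposition $H\simeq R\#T_{-1}$, the coalgebra structure on $R\#T_{-1}$ is the smash \emph{coproduct}, not the tensor coproduct, so ``$R$ pointed'' does not obviously force $R\#T_{-1}$ to be pointed; you would need an argument specific to $T_{-1}$ here. Second, and more seriously, you explicitly leave the ``delicate possibility'' (every skew-primitive of $H$ mapping to a trivial one under $\pi$) unresolved, proposing to run the same dichotomy on $H^{*}$ --- but nothing prevents \emph{both} $H$ and $H^{*}$ from falling into the delicate case simultaneously, so this does not terminate. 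Your closing sentence essentially concedes that the crux is missing.

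The paper closes exactly this gap by a direct construction rather than a dichotomy. Setting $f_{ij}:=S(e_{ji})$, one first shows (Claim \ref{af como en 14}) that $e_{11}f_{22}=f_{22}e_{11}=e_{22}f_{11}=f_{11}e_{22}=g$ and $e_{12}f_{21}=f_{21}e_{12}=e_{21}f_{12}=f_{12}e_{21}=0$, using the description of $\pi(e_{ij})$ together with \cite[Thm.~2.1]{bitidasca}. From these relations one checks that $x:=f_{11}e_{12}$ and $y:=f_{22}e_{21}$ are non-trivial $(1,g)$-primitives, and that $\{1-g,x,y\}$ is linearly independent (using $\pi$ again). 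Hence the subalgebra they generate is an $8$-dimensional pointed Hopf algebra with $|G|=2$ and two independent skew-primitives, i.e.\ $\cA_{2}$ (Claim \ref{H contiene cA2}). Since $H^{*}$ is in the same situation, $H^{*}$ also contains $\cA_{2}\simeq\cA_{2}^{*}$, giving an epimorphism $\Pi:H\twoheadrightarrow\cA_{2}$; examining $\dim\Pi(C)$ via \cite[Thm.~2.1]{bitidasca} then forces $\Pi(H)\subsetneq\cA_{2}$, a contradiction. The point is that the explicit products $f_{11}e_{12}$ and $f_{22}e_{21}$ manufacture the second skew-primitive direction you were hoping to find, eliminating the need for your section/no-section split.
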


\begin{proof}
Suppose that $H^*$ is non-pointed. Then $H^*$ also is of type
$(2,2)$, by \ref{posibles coradicales}, \ref{16 tiene gr likes},
\ref{no existe G4 mas coM}, \ref{G4 mas coM mas coM es dual
punteada} and \ref{G2 mas coM or mas coM mas coM es dual punteada}.

Let $C$ and $D$ be the two simple subcoalgebras of $H$ of
dimension $4$. By \ref{G2 mas coM or mas coM mas coM es dual
punteada} $(ii)$,  $S$ permutes them and by \cite{nicozoeler}, $H$
is generated as an algebra by $C$ and $D$; in particular $H$ is
also generated as an algebra by $C$ and $1$ by \ref{C y D generan
H entonces C tambien}.

We split the proof of the proposition into several claims.
\begin{af}\label{proyeccion a la sweedler}
(i) There exists $\pi:H\twoheadrightarrow T_{-1}$ an epimorphism of
Hopf algebras.

(ii) Let $1\neq g \in G(H)$. Then $\pi(g)\neq 1$.

(iii) $S^2=\adl(g)$.
\end{af}

Indeed, $(i)$ follows from \ref{existen casi primitivos} $(ii)$
applied to $H^*$. Using \eqref{nec 6} and \eqref{nec 7}, it
follows that $\adl(g)$ fixes $C$ and $D$. Since $\pi$ is an
epimorphism and by \ref{existen casi primitivos} $(iv)$
$g\notin{\mathcal Z}(H)$, we have by \ref{truco util} that
$\pi(g)\neq1$, and $(ii)$ follows.

We next prove $(iii)$. By \ref{stefan 1.4 b} there is a comatrix
basis $\{e_{ij}\mid1\leq i,j\leq2\}$ of $C$  and $\omega\in k$
such that $gS^2(e_{ij})g=\omega^{i-j}e_{ij}$. Applying $\pi$ when
$i=2$ and $j=1$, we get
$$
\omega\pi(e_{21})=\pi(gS^2(e_{21})g)=\pi(g)S^2(\pi(e_{21}))\pi(g)=\pi(e_{21}).
$$
The last equality follows from $(ii)$ and the definition of the
antipode of $T_{-1}$. The same holds true with $e_{12}$ instead of
$e_{21}$. Now, if $\omega\neq1$ then $\pi(e_{12})=\pi(e_{21})=0$,
and hence $\pi(H)\subseteq k[G(T_{-1})]$. A contradiction, since
$\pi$ is an epimorphism. Thus $\omega=1$ and  $(iii)$ follows. The
claim is proved. \smallbreak

Let $\cE:=\{e_{ij}\mid1\leq i,j\leq2\}$ denote a comatrix basis of
$C$ such that $S^2(e_{ij})=ge_{ij}g=(-1)^{i-j}e_{ij}$ given by
\ref{stefan 1.4 b}. Then, as in the proof of \cite[Lemm.
2.7]{natale}, the elements of $\cE$ satisfy
\begin{equation}\label{como en 12 bis}
\pi(e_{12})=0\neq\pi(e_{21})\in\cP(T_{-1})\quad\mbox{or}\quad
\pi(e_{21})=0\neq\pi(e_{12})\in\cP(T_{-1}).
\end{equation}
and
\begin{equation}\label{como en 12}
\pi(e_{11})=\pi(g)\mbox{ and
}\pi(e_{22})=1\quad\mbox{or}\quad\pi(e_{11})=1\mbox{ and
}\pi(e_{22})=\pi(g),
\end{equation}

\smallbreak The following claim is inspired by the proof of
\cite[Prop. 5.3]{bitidasca}.

\begin{af}\label{af como en 14}
If $f_{ij}:=S(e_{ji})$ for $1\leq i,j\leq2$ then
\begin{equation}\label{como en 14}
e_{11}f_{22}=f_{22}e_{11}=e_{22}f_{11}=f_{11}e_{22}=g\quad\mbox{and}
\end{equation}
\begin{equation}\label{como en 14 bis}
e_{12}f_{21}=f_{21}e_{12}=e_{21}f_{12}=f_{12}e_{21}=0.
\end{equation}
\end{af}

Indeed, as in \cite[Prop. 5.3]{bitidasca}, we define
$$
E_{11}:=e_{11}f_{22},\,E_{12}:=e_{12}f_{21},\,E_{21}:=e_{21}f_{12}\mbox{
and }E_{22}=e_{22}f_{11}$$
$$
F_{11}:=f_{11}e_{22},\,F_{12}:=f_{12}e_{21},\,F_{21}:=f_{21}e_{12}\mbox{
and }F_{22}=f_{22}e_{11}.
$$

Note that, as in \cite[Prop. 5.3]{bitidasca}, the coalgebra $E$
generated by the $E_{ij}$'s is stable by $S$. The same holds true
for $F$, the coalgebra generated by the $F_{ij}$'s. Since $S$
permutes $C$ and $D$, $\dim E$ and $\dim F$ are less than $4$. We
claim that neither $1\in E$ nor $1\in F$. Indeed, if
$1=\sum_{ij}a_{ij}E_{ij}$ with $a_{ij}\in k$ then we get a
contradiction by writing:
$$
1=\pi(1)=\sum_{ij}a_{ij}\pi(E_{ij})=(a_{11}+a_{22})\pi(g),
$$
where the last equality follows from \eqref{como en 12} and
\eqref{como en 12 bis}. The same holds true if we suppose $1\in
F$.

Then by \cite[Thm. 2.1]{bitidasca}, $E=F=k\cdot g$ and hence
\eqref{como en 14} and \eqref{como en 14 bis} hold. The claim is
proved.

\begin{af}\label{H contiene cA2}
There exists a Hopf subalgebra of $H$ isomorphic to $\cA_2$ (see
Subsection \ref{la clasificacion de stefan}).
\end{af}

Indeed, let $x:=f_{11}e_{12}$. Since
$0=\e(e_{12})=f_{11}e_{12}+f_{21}e_{22}$, it follows that
$x=-f_{21}e_{22}$. Then
$$
\begin{array}{l}
\D(x)=\D(f_{11})\D(e_{12})\\
\noalign{\smallskip}=f_{11}e_{11}\ot f_{11}e_{12}+f_{12}e_{11}\ot
f_{21}e_{12}+f_{11}e_{12}\ot f_{11}e_{22}+f_{12}e_{12}\ot
f_{21}e_{22}\\
\noalign{\smallskip}=f_{11}e_{11}\ot x+f_{12}e_{12}\ot 0+x\ot
g+f_{12}e_{12}\ot (-x)\quad[\mbox{by }\,\eqref{como en 14 bis}\mbox{ and }\eqref{como en 14}]\\
\noalign{\smallskip}=(f_{11}e_{11}-f_{12}e_{12})\ot x+x\ot g\\
\noalign{\smallskip}=1\ot x+x\ot g \qquad\qquad\qquad\qquad[\mbox{by
}\,1=\e(f_{11})=m(\id\ot S)\D(f_{11})].
\end{array}
$$
Moreover, since $f_{11}$ is invertible and $e_{12}\neq0$, it follows
that $x\neq0$.

Also, let $y:=f_{22}e_{21}$. Since
$0=\e(e_{21})=f_{12}e_{11}+f_{22}e_{21}$, it follows that
$y=-f_{12}e_{11}$. Then
$$
\begin{array}{l}
\D(y)=\D(f_{22})\D(e_{21})\\
\noalign{\smallskip}=f_{21}e_{21}\ot f_{12}e_{11}+f_{22}e_{21}\ot
f_{22}e_{11}+f_{21}e_{22}\ot f_{12}e_{21}+f_{22}e_{22}\ot
f_{22}e_{21}\\
\noalign{\smallskip}=f_{21}e_{21}\ot (-y)+y\ot g+f_{21}e_{22}\ot
0+f_{22}e_{22}\ot y\quad[\mbox{by }\,\eqref{como en 14}\mbox{ and }\eqref{como en 14 bis}]\\
\noalign{\smallskip}=(f_{22}e_{22}-f_{21}e_{21})\ot y+y\ot g\\
\noalign{\smallskip}=1\ot y+y\ot g\qquad\qquad\qquad\qquad[\mbox{by
}\,1=\e(f_{22})=m(\id\ot S)\D(f_{22})].
\end{array}
$$
Since $f_{22}$ is invertible and $e_{21}\neq0$, it follows that
$y\neq0$.

\smallbreak If $\{1-g,x,y\}$ are linearly independent then the claim
follows. In fact, the Hopf subalgebra generated by $\{g,x,y\}$ must
be of dimension $8$ by \cite{nicozoeler}, and by Subsection \ref{la
clasificacion de stefan} it must be isomorphic to $\cA_2$.

We next prove that $\{1-g,x,y\}$ are linearly independent. Let
$a,b,c\in k$ such that $0=a(1-g)+bx+cy$. Appliying $\pi$ we get that
$-a(1-\pi(g))=b\pi(x)+c\pi(y)$. But by the Claim \ref{proyeccion a
la sweedler} $(ii)$, we have that $\pi(g)\neq 1$ and by \eqref{como
en 12 bis} and \eqref{como en 12}, $\pi(x) = 0$ or $\pi(y) = 0$.
Hence, $\pi(g)$ is the group-like element of $T_{-1}$ and $\pi(x) $
or $\pi(y)$ is the skew-primitive. Then $a=0$ and $b=0$ or $c=0$.
But if $b\neq0$ or $c\neq0$, then $x=0$ or $y=0$; a contradiction.
Hence $a=b=c=0$ that is, $\{1-g,x,y\}$ are linearly independent.
Hence $H$ contains a Hopf subalgebra isomorphic to $\cA_2$. This
proves the claim.

\smallbreak Now, since $\cA_2\simeq(\cA_2)^*$, applying \ref{H
contiene cA2} to $H^*$, we have that there exists
$\Pi:H\rightarrow\cA_2$ an epimorphism of Hopf algebras. Denote by
$\Pi\cE$ the subcoalgebra of $\cA_2$ generated by the
$\Pi(e_{ij})$'s. We will get a contradiction by trying to find the
dimension of $\Pi\cE$. Since $(\cA_2)_0\simeq C_2$, $\dim\Pi\cE<4$
and since $\Pi$ is an epimorphism, $\dim\Pi\cE\neq 0$. Next we
apply \cite[Thm. 2.1]{bitidasca} to $\Pi\cE$: if $\dim\Pi\cE\leq2$
then $\Pi\cE=\pi(C)\subseteq G(\cA_2)$ by \cite[Thm.
2.1]{bitidasca}, and hence $\pi(H)\subseteq k[G(\cA_2)]$; a
contradiction. If $\dim\Pi\cE=3$, by \cite[Thm. 2.1]{bitidasca},
$\Pi\cE$ is the linear span of two group-likes and a
skew-primitive. Thus $\Pi\cE=\Pi(C)$ is contained in a Hopf
subalgebra of $\cA_2$ isomorphic to $T_{-1}$. Then
$\Pi(H)\varsubsetneq\cA_2$, a contradiction.

Summarizing, $H^*$ cannot have a Hopf subalgebra isomorphic to
$\cA_2$. Then $H^*$ indeed is pointed.
\end{proof}

\end{document}